\pgfplotsset{compat = 1.17,
tick label style = {font = \tiny},
legend style = {font = \tiny},
}
\def\@seccntformat#1{%
  \protect\textup{\protect\@secnumfont
    \ifnum\pdfstrcmp{subsection}{#1}=0 \bfseries\fi
    \csname the#1\endcsname
    \protect\@secnumpunct
  }%
}
\numberwithin{equation}{section}
\newtheorem{theorem}{Theorem}[section]
\newtheorem{proposition}[theorem]{Proposition}
\newtheorem{corollary}[theorem]{Corollary}
\newtheorem{lemma}[theorem]{Lemma}
\newtheorem{algorithm}[theorem]{Algorithm}
\newtheorem{remark}[theorem]{Remark}
\newcommand\0{\boldsymbol{0}}
\newcommand\nn{\boldsymbol{n}}
\newcommand\qq{\boldsymbol{q}}
\newcommand\rr{\boldsymbol{r}}
\renewcommand\tt{\boldsymbol{t}}
\newcommand\uu{\boldsymbol{u}}
\newcommand\CC{\boldsymbol{C}}
\newcommand\HH{\boldsymbol{H}}
\newcommand\LL{\boldsymbol{L}}
\newcommand\MM{\boldsymbol{M}}
\newcommand\NN{\boldsymbol{N}}
\newcommand\WW{\boldsymbol{W}}
\newcommand\eps{\varepsilon}
\newcommand\pphi{\boldsymbol{\phi}}
\newcommand\ppsi{\boldsymbol{\psi}}
\newcommand\PPhi{\boldsymbol{\Phi}}
\newcommand\N{\mathbb{N}}
\newcommand\R{\mathbb{R}}
\newcommand\A{\mathcal{A}}
\newcommand\nodes{\mathcal{N}_h}
\newcommand\dts{\boldsymbol{\mathcal{K}}_h}
\newcommand\poly{\mathcal{P}}
\newcommand\sphere{\mathbb{S}^{d-1}}
\newcommand\mesh{\mathcal{T}_h}
\newcommand\VVV{\mathbf{V}}
\newcommand\grad{\nabla}
\newcommand\Grad{\boldsymbol{\nabla}}
\newcommand{\abs}[1]{\lvert #1 \rvert}
\newcommand{\inner}[3][]{\langle #2,#3 \rangle_{#1}}
\newcommand{\norm}[2][]{\lVert #2 \rVert_{#1}}
\newcommand{\weakto}{\rightharpoonup}
\DeclareMathOperator{\diam}{diam}
\DeclareMathOperator*{\argmin}{arg\,min}
\newcommand\tol{\mathsf{tol}}
\newcommand{\wt}{\widetilde}
\newcommand\cdw{c_{\mathrm{dw}}}
\newcommand\nnu{\boldsymbol{\nu}}
\begin{document}

\title{Gamma-convergent projection-free finite element methods
for nematic liquid crystals: The Ericksen model}
\author{Ricardo H.\ Nochetto}
\address{University of Maryland, Department of Mathematics and Institute for Physical Science and Technology,
College Park, MD 20742, USA}
\email{rhn@umd.edu}
\author{Michele Ruggeri}
\address{TU Wien, Institute of Analysis and Scientific Computing,
1040 Vienna, Austria}
\email{michele.ruggeri@asc.tuwien.ac.at}
\author{Shuo Yang}
\address{University of Maryland, Department of Mathematics,
College Park, MD 20742, USA}
\email{shuoyang@umd.edu}

\date{\today}

\begin{abstract}
The Ericksen model for nematic liquid crystals couples a director field with a scalar degree of orientation variable,
and allows the formation of various defects with finite energy.
We propose a simple but novel finite element approximation
of the problem that can be implemented easily within standard finite element packages.
Our scheme is projection-free and thus circumvents the use of weakly acute meshes,
which are quite restrictive in 3D but are required by recent algorithms for convergence.
We prove stability and $\Gamma$-convergence properties of the new method in the presence of defects.
We also design an effective nested gradient flow algorithm for computing minimizers
that controls the violation of the unit-length constraint of the director.
We present several simulations in 2D and 3D that document the performance of the proposed scheme
and its ability to capture quite intriguing defects.
\end{abstract}

\maketitle

\section{Introduction}

\subsection{Liquid crystals with variable degree of orientation}

Liquid crystals (LCs) are a mesophase between crystalline solid and isotropic liquid. They are a host of numerous potential applications in engineering and science, in particular in materials science \cite{ackerman2015self, araki2006colloidal, blanc2016colloidal}. Nematic LCs are made of rod-like molecules with no positional order that tend to point in a preferred direction. LC materials are thus anisotropic.

We consider the one-constant Ericksen model for nematic LCs with variable degree of orientation~\cite{ericksen1991},
which lies between the Oseen--Frank director model and the Landau--de Gennes $Q$-tensor
model~\cite{de1993physics,virga1994}.
The state of the LC is described in terms of a vector field $\nn$ and a scalar function $s$,
which satisfy the constraints $\abs{\nn}=1$ and $-1/(d-1)<s<1$ for the space dimension $d=2,3$.
The director $\nn$ indicates the preferred orientations of the LC molecules,
while $s$ represents the degree of alignment that the molecules have with respect to $\nn$,
both in the sense of local probabilistic average.
A schematic illustration of their meaning is given in Figure~\ref{fig:order}.
The equilibrium state is given by an admissible pair $(s, \nn)$ that minimizes the Ericksen energy
\begin{equation} \label{eq:Ericksen-energy}
E[s,\nn]
= 
\frac{1}{2} \int_{\Omega} \big( \kappa \abs{\grad s}^2 + s^2\abs{\Grad\nn}^2\big)
+
\int_{\Omega} \psi (s),
\end{equation}
where $\kappa>0$ is constant; the constraint on $s$ is enforced by the double well potential $\psi$.
We refer to~\cite{ambrosio1990,lin1991} for early analysis of the Ericksen model.

\begin{figure}[htbp]
\centering
\begin{tikzpicture}
\draw (0, 0) node[] {\includegraphics[height=3.5cm]{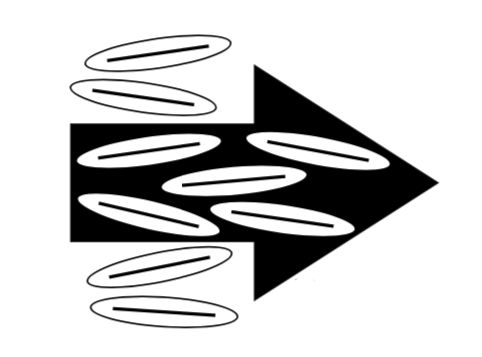}};
\draw (8, 0) node[inner sep=0] {\includegraphics[height=4cm]{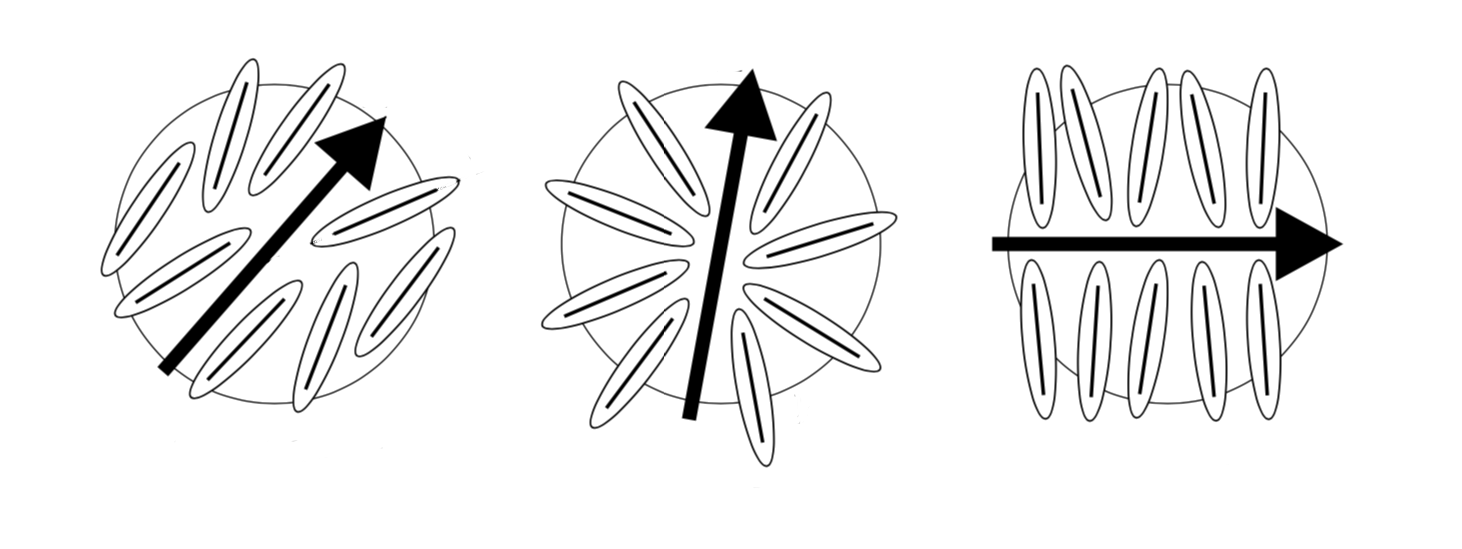}};
\draw (1, 1) node {$\nn$};
\draw (4.6, -1.7) node {$s \approx 1$};
\draw (7.8, -1.7) node {$s \approx 0$};
\draw (11.1, -1.7) node {$s \approx -1/2$};
\draw (5.7, 1.4) node {$\nn$};
\draw (8.5, 1.6) node {$\nn$};
\draw (12.5, 0.7) node {$\nn$};
\end{tikzpicture}
\caption{Schematic illustration of $\nn(x)$ and $s(x)$, in microscopic scale near a fixed $x \in \Omega \subset \R^3$.
Note that $s = 1$ represents the state of perfect alignment in which all molecules in the local ensemble are parallel to $\nn$.
Likewise, $s = -1/2$ represents the state of perpendicular alignment.
The case $s = 0$ corresponds to a defect in the LC material, an isotropic distribution of molecules in the local ensemble that do not lie along any preferred direction.}
\label{fig:order}
\end{figure}

If $s$ can be approximated by a nonvanishing constant, then the energy~\eqref{eq:Ericksen-energy}
reduces to the Oseen--Frank energy $E[\nn] \propto \int_\Omega\abs{\Grad\nn}^2$,
whose minimizers are harmonic maps and have been extensively studied, e.g., in~\cite{schoen1982regularity,brezis1986harmonic}. However, the simpler Oseen--Frank model has severe limitations in capturing defects:
It only admits point defects with finite energy for $d=3$.
In contrast, the Ericksen model~\eqref{eq:Ericksen-energy} allows for $\nn \notin \HH^1(\Omega)$
and
compensates blow-up of $\Grad\nn$
by letting $s$ vanish, which is the mechanism for the formation of a variety of line and surface defects for $d=2,3$.
This physical process leads to a \emph{degenerate} Euler--Lagrange equation for $\nn$ that poses serious difficulties
to formulate mathematically sound algorithms to approximate~\eqref{eq:Ericksen-energy} and study their convergence.

\subsection{Numerical analysis of the Ericksen model}

Several numerical methods for the Oseen--Frank model have been proposed~\cite{lin1989relaxation,alouges1997,bartels2010numerical,bartels2016}.
Finite element methods (FEMs) for the Ericksen model are designed
in~\cite{BFP2006,nwz2017,nochetto2018ericksen,walker2020,carter2020domain};
see also the recent review~\cite{bw2021}.
In contrast to \cite{BFP2006}, a fundamental structure of \eqref{eq:Ericksen-energy} is exploited in~\cite{nwz2017,nochetto2018ericksen}
to design and analyze FEMs that handle the inherent degeneracy of \eqref{eq:Ericksen-energy}
without regularization and enforce the constraint $\abs{\nn}=1$ robustly.
Stability and convergence properties via $\Gamma$-convergence are proved in~\cite{nwz2017,nochetto2018ericksen},
pioneering results in this setting. They hinge on a clever discrete energy that mimics the structure of~\eqref{eq:Ericksen-energy}
discretely but, unfortunately, is cumbersome to implement in standard software packages and requires weakly acute meshes.
The latter ensures that the projection of discrete director fields onto the unit sphere is energy decreasing,
and thus compatible with the quasi-gradient flow, but is quite restrictive and difficult to implement for $d=3$
and domains with nontrivial topology.

\subsection{Contributions}

In this work, we propose a projection-free FEM that avoids dealing with weakly acute meshes. Without the projection step, the unit-length constraint $\abs{\nn}=1$ is no longer satisfied exactly but instead is relaxed at each step of our iterative solver, a nested gradient flow. The latter guarantees control of the violation of $\abs{\nn}=1$ and asymptotic enforcement of it. We summarize the chief novelties and advantages of our approach as follows.

\begin{enumerate}[$\bullet$]
\item
\emph{Shape-regular meshes}.
Partitions of $\Omega$ are assumed to be only shape-regular,
which allows for the use of software with general mesh generators such as Netgen~\cite{schoberl2017netgen}.
Avoiding weakly acute meshes is important in 3D to deal with interesting but nontrivial geometries as documented in Section~\ref{sec:numerics}.
An earlier work achieving this goal is~\cite{walker2020},
which presents a mass-lumped FEM with a consistent stabilization term involving $s^2 \Grad\nn^\top\nn$
for the generalized Ericksen energy.

\smallskip
\item
\emph{Standard algorithm}.
Our novel discretization of~\eqref{eq:Ericksen-energy} is straightforward,
requires no stabilization,
and is easy to implement in standard software packages
such as NGSolve~\cite{schoberl2017netgen}.
In contrast to \cite{nwz2017,nochetto2018ericksen}, our FEM does no longer exploit the structure of~\eqref{eq:Ericksen-energy},
but its analysis does.

\smallskip
\item
\emph{Linear solver}.
We propose a nested gradient flow that,
despite the nonlinear nature of the problem,
is \emph{fully linear}
to compute minimizers.
The inner loop to advance the director $\nn$ for fixed degree of orientation $s$ is allowed to subiterate.
This turns out to induce an acceleration mechanism for the computation and motion of defects.
For a recent acceleration technique based on a domain decomposition approach,
we refer to~\cite{carter2020domain}.

\smallskip
\item
\emph{$\Gamma$-convergence}.
The analysis of our FEM hinges heavily on the underlying structure of~\eqref{eq:Ericksen-energy},
which is fully discussed in Section~\ref{sec:model}
and relies on the notion of $L^2$-gradient on $\nn$~\cite[Theorem~6.2]{eg2015};
see Proposition~\ref{prop:L2diff} below.
Such a notion was already used in~\cite{BNW2020} in the context of the uniaxial Q-tensor LC model.
We prove stability and $\Gamma$-convergence.
Our results are similar to those in~\cite{nwz2017,nochetto2018ericksen,walker2020}, but the use of the discrete structure is new.

\smallskip
\item
\emph{Numerical experiments}.
We present several simulations in Section \ref{sec:numerics}. Some are meant to compare the new algorithm with the existing literature in terms of performance and ability to capture defects. Other experiments explore 3D intriguing configurations such as the propeller defect and challenging variations of the Saturn ring defect.

\smallskip
\item
\emph{Boundary conditions}.
Since we do not impose the unit-length constraint $\abs{\nn}=1$, the treatment of boundary data can be simplified and their properties weakened. This affects the regularization procedure for the lim-sup property and the possible presence of defects at the boundary of $\Omega$. We do not explore these issues in this paper but rather in future extension to the $Q$-tensor model.

\end{enumerate}
 
\subsection{Outline}

The remainder of this work is organized as follows.
In the next short subsection, we collect some general notation used throughout the paper.
In Section~\ref{sec:model}, we describe the Ericksen model for LCs with variable degree of orientation and discuss its key structure.
In Section~\ref{sec:discretization}, we introduce our discretization of the model and state
our $\Gamma$-convergence result.
In Section~\ref{sec:algorithm},
we present our iterative scheme for the computation of discrete local minimizers.
In Section~\ref{sec:numerics}, we show numerical experiments illustrating effectiveness and efficiency of our method, as well as its flexibility to deal with complex defects in 3D.
We postpone the proofs of most results to the Section~\ref{sec:proofs}.

\subsection{General notation}

We denote by $\N = \{ 1,2,\dots\}$ the set of natural numbers
and set $\N_0 := \N \cup \{ 0 \}$.
For $d=2,3$, we denote the unit sphere in $\R^d$ by $\sphere = \{ x \in \R^d : \abs{x} = 1 \}$.
We denote by $B_r(x)$ the ball of radius $r>0$ centered at $x \in \R^d$.
For (spaces of) vector- or matrix-valued functions, we use bold letters,
e.g., for a generic domain $\Omega \subset \R^d$,
we denote both $L^2(\Omega;\R^d)$ and $L^2(\Omega;\R^{d \times d})$ by $\LL^2(\Omega)$.
We denote by $\inner{\cdot}{\cdot}$
both the scalar product of $\LL^2(\Omega)$ and the duality pairing between $\HH^1(\Omega)$ and its dual,
with the ambiguity being resolved by the arguments.
We use the notation $\lesssim$ to denote
\emph{smaller than or equal to up to a multiplicative constant},
i.e., we write $A \lesssim B$ if there exists a constant $c>0$,
which is clear from the context and always independent of the discretization parameters,
such that $A \leq c B$.

\section{Problem formulation} \label{sec:model}

Let $\Omega \subset \R^d$ ($d=2,3$) be a bounded Lipschitz domain.
In the Ericksen model,
the state of the LC is described in terms of
a unit-length vector field $\nn : \Omega \to \sphere$
and a scalar function $s : \Omega \to (-1/(d-1),1)$.
Equilibrium configurations of the LC are
minimizers of the energy $E[s,\nn] = E_1[s,\nn] + E_2[s]$ in \eqref{eq:Ericksen-energy}, where
\begin{equation} \label{eq:energy}
  E_1[s,\nn] := \frac{1}{2} \int_{\Omega} \big( \kappa \abs{\grad s}^2 + s^2\abs{\Grad\nn}^2\big),
  \qquad
 E_2[s] := \int_{\Omega} \psi (s).
\end{equation}
The double well potential
$\psi : (-1/(d-1),1) \to \R_{\ge 0}$ satisfies the following properties~\cite{ericksen1991}:
\begin{itemize}
\item $\psi \in C^2(-1/(d-1),1)$,
\item $\lim_{s \to 1^-} \psi(s) = + \infty = \lim_{s \to {-1/(d-1)}^+} \psi(s)$,
\item $\psi(0) > \psi(s^*) = \min_{s \in (-1/(d-1),1)} \psi(s) = 0$ for some $s^* \in (0,1)$,
\item $\psi'(0) = 0$.
\end{itemize}
In~\eqref{eq:energy},
$E_1[s,\nn]$ is the one-constant approximation of the elastic energy
proposed in~\cite{ericksen1991},
while $E_2[s]$ is a potential energy which confines the variable $s$ within the physically admissible interval $(-1/(d-1),1)$.
The presence of the weight $s^2$ in the second term of $E_1[s,\nn]$
allows for blow-up of $\Grad\nn$, namely $\nn \notin \HH^1(\Omega)$, in the \emph{singular set}
\begin{equation} \label{eq:singular}
\Sigma := \{ x \in \Omega : s(x) = 0\},
\end{equation}
where defects may occur.

To complete the setting,
we define the set of admissible functions where we seek minimizers of~\eqref{eq:energy}.
Note that, allowing for a director $\nn \notin \HH^1(\Omega)$,
one encounters at least two difficulties:
On the one hand,
it is not clear how to interpret the gradient of $\nn$ appearing in $E_1[s,\nn]$.
On the other hand,
the trace of $\nn$ on the boundary of $\Omega$ is not well-defined,
so that one cannot impose Dirichlet conditions on $\nn$ in the standard way.
To cope with these problems,
following~\cite{ambrosio1990,lin1991}, we introduce the auxiliary variable $\uu = s \nn$.
Then, the product rule formally yields that
\begin{equation} \label{eq:productRule}
\Grad \uu
= \nn \otimes \grad s + s \Grad \nn.
\end{equation}
Since $\abs{\nn} = 1$, the identities
$\Grad \nn^\top \nn = \0$
and $\abs{\nn \otimes \grad s} = \abs{\grad s}$ are valid.
It follows that the above decomposition of $\Grad\uu$ is orthogonal,
i.e.,
\begin{equation} \label{eq:decomposition1}
\abs{\Grad \uu}^2
= \abs{\nn \otimes \grad s}^2
+ s^2 \abs{\Grad \nn}^2
= \abs{\grad s}^2
+ s^2 \abs{\Grad \nn}^2.
\end{equation}
In particular, $E_1[s,\nn]$ can be rewritten in terms of $s$ and $\uu = s \nn$ as
\begin{equation}\label{eq:equiv-energy}
E_1[s,\nn]
= \wt E_1[s,\uu]
= \frac{1}{2} \int_{\Omega} \big( (\kappa - 1) \abs{\grad s}^2 + \abs{\Grad\uu}^2 \big).
\end{equation}
In the latter,
the degree of orientation and the auxiliary field are decoupled.
In particular, this reveals that, for $(s,\nn)$ such that $E_1[s,\nn] < \infty$,
$\uu = s \nn \in \HH^1(\Omega)$ even though $\nn \notin \HH^1(\Omega)$.

We say that a triple $(s,\nn,\uu)$
satisfies the \emph{structural condition} if
\begin{equation} \label{eq:structural}
- \frac{1}{d-1} < s < 1,
\quad
\abs{\nn} = 1,
\quad
\text{and}
\quad
\uu = s \nn
\quad
\text{a.e.\ in } \Omega.
\end{equation}
In view of the above discussion,
we are therefore led to consider the following admissible class:
\begin{equation}\label{eq:admissible}
\A
:=\big\{(s,\nn,\uu) \in H^1(\Omega) \times \LL^\infty(\Omega) \times \HH^1(\Omega):
(s,\nn,\uu) \text{ satisfies } \eqref{eq:structural}
\big\}.
\end{equation}

For triples $(s,\nn,\uu) \in \A$,
it is possible to characterize the gradient of $\nn$ occurring in $E_1[s,\nn]$
using a weaker notion of differentiability.
To this end, we recall the following definition \cite[Theorem~6.2]{eg2015}:
We say that $\nn$ is $L^2$-differentiable at $x \in \Omega$,
and we denote its $L^2$-gradient at $x$ by $\Grad\nn(x)$, if
\begin{equation*}
\fint_{B_r(x)} \abs{\nn(y) - \nn(x) - \Grad \nn(x)(y-x)}^2 \, \mathrm{d}y = o(r^2)
\quad \text{as } r \to 0.
\end{equation*}
It is well-known that the notion of $L^2$-differentiability
is weaker than the existence of a $L^2$-integrable weak gradient,
in the sense that every $H^1$-function is $L^2$-differentiable almost everywhere
and its $L^2$-gradient coincides with the weak gradient;
see, e.g., \cite[Theorem~6.2]{eg2015}.

In the following proposition,
we establish that if $(s,\nn,\uu) \in \A$, then
$\nn$ is $L^2$-differentiable and the decomposition~\eqref{eq:decomposition1} holds
almost everywhere outside of the singular set $\Sigma$ in \eqref{eq:singular}.
Its proof will be presented in Section~\ref{sec:L2diff}.

\begin{proposition}[orthogonal decomposition] \label{prop:L2diff}
Let $(s,\nn,\uu) \in \A$.
Then, $\nn$ is $L^2$-differentiable a.e.\ in $\Omega\setminus\Sigma$.
In particular,
its $L^2$-gradient is given by
\begin{equation} \label{eq:L2gradient_formula}
\Grad \nn
= s^{-1} (\Grad\uu - \nn\otimes\grad s)
\quad
\text{a.e.\ in } \Omega\setminus\Sigma.
\end{equation}
Moreover, the following identity holds
\begin{equation} \label{eq:structural2}
\abs{\Grad \uu}^2
= \abs{\grad s}^2
+ s^2 \abs{\Grad \nn}^2
\quad
\text{a.e.\ in } \Omega\setminus\Sigma.
\end{equation}
\end{proposition}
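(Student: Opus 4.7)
The plan is to verify the formula \eqref{eq:L2gradient_formula} directly from the definition of $L^2$-differentiability, using that $s \in H^1(\Omega)$ and $\uu \in \HH^1(\Omega)$ are $L^2$-differentiable almost everywhere with $L^2$-gradients equal to their weak gradients \cite[Theorem~6.2]{eg2015}. Formally, differentiating $\uu = s\nn$ via the product rule yields $\Grad\uu = \nn\otimes\grad s + s\Grad\nn$, hence $\Grad\nn = s^{-1}(\Grad\uu - \nn\otimes\grad s) =: G$; the task is to justify this as an $L^2$-gradient of $\nn$ outside $\Sigma$.

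I would fix a point $x \in \Omega\setminus\Sigma$ that is simultaneously an $L^2$-differentiability point for $s$ and $\uu$ and a Lebesgue point for $\nn$ (a.e.\ $x$ qualifies). Using $s(y)\nn(y)=\uu(y)$, $s(x)\nn(x)=\uu(x)$, and $s(x)G(x) = \Grad\uu(x) - \nn(x)\otimes\grad s(x)$, a direct manipulation gives
\begin{equation*}
s(y)\bigl[\nn(y) - \nn(x) - G(x)(y-x)\bigr] = R_\uu(y) - R_s(y)\,\nn(x) - \bigl(s(y)-s(x)\bigr)G(x)(y-x),
\end{equation*}
where $R_\uu(y) := \uu(y)-\uu(x)-\Grad\uu(x)(y-x)$ and $R_s(y) := s(y)-s(x)-\grad s(x)\cdot(y-x)$. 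By $L^2$-differentiability at $x$, $\int_{B_r(x)}|R_\uu|^2$ and $\int_{B_r(x)}|R_s|^2$ are $o(r^{d+2})$, and by Lebesgue continuity of $s$ at $x$, $\int_{B_r(x)}|s(y)-s(x)|^2|y-x|^2 \leq r^2\int_{B_r(x)}|s(y)-s(x)|^2 = o(r^{d+2})$ as well.

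To obtain $L^2$-differentiability of $\nn$ at $x$, I would split $B_r(x) = B_r^+ \cup B_r^-$, where $B_r^+ := \{y \in B_r(x) : |s(y)| \geq |s(x)|/2\}$. On $B_r^+$, dividing the above identity by $s(y)$ immediately yields $\int_{B_r^+}|\nn(y)-\nn(x)-G(x)(y-x)|^2 = o(r^{d+2})$. On $B_r^-$, we have $|s(y)-s(x)| \geq |s(x)|/2$ and, for $r$ small, $|R_s(y)| \geq |s(x)|/4$; combined with $\int_{B_r(x)}|R_s|^2 = o(r^{d+2})$ this forces $|B_r^-| = o(r^{d+2})$. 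Since $|\nn|\leq 1$ a.e.\ and $|G(x)|r$ is bounded, the integrand is uniformly bounded on $B_r^-$, so $\int_{B_r^-} = o(r^{d+2})$. Summing the two contributions gives $\fint_{B_r(x)}|\nn(y)-\nn(x)-G(x)(y-x)|^2 = o(r^2)$, i.e., \eqref{eq:L2gradient_formula}.

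For the orthogonal decomposition \eqref{eq:structural2}, I would use that $|\uu|^2 = s^2$ a.e.\ (since $|\nn|=1$); both sides belong to $W^{1,1}(\Omega)$, so equating their weak gradients produces $(\Grad\uu)^\top\uu = s\grad s$, i.e., $(\Grad\uu)^\top\nn = \grad s$ a.e.\ on $\Omega\setminus\Sigma$. This is exactly the Frobenius orthogonality $\Grad\uu:(\nn\otimes\grad s) = |\grad s|^2$. Squaring $s\Grad\nn = \Grad\uu - \nn\otimes\grad s$ and using $|\nn\otimes\grad s|^2 = |\grad s|^2$ then yields $s^2|\Grad\nn|^2 = |\Grad\uu|^2 - |\grad s|^2$. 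The main obstacle throughout is that $s$ may vanish on a set of positive measure arbitrarily close to a point where $s(x)\neq 0$, preventing naive pointwise division by $s$; the key observation that overcomes this is that $L^2$-differentiability of $s$ at $x$ is strictly stronger than Lebesgue continuity and provides the sharp measure bound $|B_r^-| = o(r^{d+2})$, which is precisely what is needed to absorb the bad set using only the trivial bound $|\nn|\leq 1$.
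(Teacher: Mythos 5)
Your proof is correct, and it takes a genuinely (if subtly) different route from the paper's on the $L^2$-differentiability part. The paper multiplies $\nn(y)-\nn(x)-\Grad\nn(x)(y-x)$ by the \emph{fixed constant} $s(x)$ and rewrites it as
\begin{equation*}
s(x)\bigl[\nn(y)-\nn(x)-\Grad\nn(x)(y-x)\bigr]
= R_\uu(y) - R_s(y)\,\nn(y) - \bigl(\grad s(x)\cdot(y-x)\bigr)\bigl[\nn(y)-\nn(x)\bigr],
\end{equation*}
so that one can simply divide by $s(x)\neq 0$; the three terms are then controlled by $L^2$-differentiability of $\uu$, of $s$ (together with $|\nn|=1$), and by Lebesgue continuity of $\nn$ at $x$. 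You instead multiply by $s(y)$, which gives a cleaner right-hand side (replacing $\nn(y)$ by $\nn(x)$ and avoiding the cross term in $\nn$), but at the price of having to divide by a quantity that can vanish near $x$. Your remedy --- splitting $B_r(x)$ into $B_r^\pm$ and showing $|B_r^-| = o(r^{d+2})$ by noting that $|R_s|\gtrsim |s(x)|$ on $B_r^-$ while $\int_{B_r(x)}|R_s|^2=o(r^{d+2})$ --- is a correct and slightly nonstandard measure-theoretic argument. In effect you trade the Lebesgue-continuity input on $\nn$ for a quantitative smallness bound on the set where $|s|$ drops; both rely on the same underlying $o(r^{d+2})$ estimates from Calder\'on--Zygmund differentiation theory, so neither is strictly more economical, but the paper's version is shorter because it never needs to divide by $s(y)$ at all. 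Your derivation of the orthogonal decomposition \eqref{eq:structural2} is essentially the paper's computation, repackaged: differentiating $|\uu|^2=s^2$ in $W^{1,1}(\Omega)$ yields $(\Grad\uu)^\top\nn=\grad s$ on $\Omega\setminus\Sigma$, which is exactly the identity $\Grad\uu:(\nn\otimes\grad s)=|\grad s|^2$ that the paper obtains by a direct index computation.
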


This allows us to give a precise meaning to $E_1[s,\nn]$ in \eqref{eq:energy}.
Depending on the context, we interpret $\Grad\nn$ in the sense of $L^2$-gradient
in $\Omega\setminus\Sigma$
and $\int_\Sigma s^2|\Grad\nn|^2=0$, or we alternatively replace $\Omega$
by $\Omega\setminus\Sigma$ as domain of integration or even use the representation
$\wt{E}_1[s,\uu]$ of \eqref{eq:equiv-energy}.

Turning to boundary conditions,
let $\Gamma_D \subseteq \partial\Omega$ 
be a relatively open subset of the boundary
such that $\abs{\Gamma_D}>0$,
where we aim to impose Dirichlet boundary conditions.
These, in the context of LCs, are usually referred to as \emph{strong anchoring} conditions.
To this end, given a triple $(g, \qq, \rr) \in W^{1,\infty}(\R^3) \times \LL^{\infty}(\R^3) \times \WW^{1,\infty}(\R^3)$
satisfying the structural condition~\eqref{eq:structural},
we consider the following restricted admissible class that incorporates boundary conditions:
\begin{equation}\label{eq:admissibleBC}
\A(g,\rr)
:=\big\{(s,\nn,\uu) \in \A:
s \vert_{\Gamma_D} = g \vert_{\Gamma_D}
\text{ and }
\uu \vert_{\Gamma_D}= \rr \vert_{\Gamma_D}
\big\}.
\end{equation}
Overall, we are interested in the following constrained minimization problem:
Find $(s^*,\nn^*,\uu^*)\in\A(g,\rr)$ such that
\begin{equation} \label{eq:minimization}
(s^*,\nn^*,\uu^*) = \argmin_{(s,\nn,\uu)\in\A(g,\rr)}E[s,\nn].
\end{equation}

To conclude this section,
let $\delta_0 > 0$ be sufficiently small.
Some of our results below will require the following technical assumptions
on the Dirichlet data, namely
\begin{gather}\label{eq:DataAwayFromBoundaryS}
- \frac{1}{d-1} + \delta_0
\le g(x)
\le 1 - \delta_0
\quad
\text{for all } x \in \R^d,
\\ \label{eq:DataAwayFrom0}
g \ge \delta_0
\quad
\text{on } \Gamma_D,
\end{gather}
and on the double well potential, namely
\begin{equation} \label{eq:DoubleWellAwayFromBoundaryS}
\begin{aligned}
\psi(s) &\ge \psi(1-\delta_0)
&&\text{for all } s \ge 1-\delta_0, \\
\psi(s) &\ge \psi\left(- \frac{1}{d-1} + \delta_0\right)
&&\text{for all } s \le - \frac{1}{d-1} + \delta_0,
\end{aligned}
\end{equation}
and $\psi$ in monotone in $(-1/(d-1),-1/(d-1)+\delta_0)$ and in $(1-\delta_0,1)$.
Note that~\eqref{eq:DataAwayFrom0} implies that $\qq = g^{-1}\rr$ is $\WW^{1,\infty}$
in a neighborhood of $\Gamma_D$
and hence $\nn$ is $\HH^1$
in a neighborhood of $\Gamma_D$,
so that in this case one can impose the Dirichlet conditions
$\nn \vert_{\Gamma_D}= \qq \vert_{\Gamma_D}$
directly on $\nn$.
Finally, the property~\eqref{eq:DoubleWellAwayFromBoundaryS}
is consistent with the fact that $\psi(s) \to + \infty$ as $s \to - 1/(d-1)$ and $s \to 1$.

\section{$\Gamma$-convergent finite element discretization} \label{sec:discretization}

We assume $\Omega$ be a polytopal domain
and consider a shape-regular family $\{ \mesh \}$
of simplicial meshes of $\Omega$
parametrized by the mesh size $h = \max_{K \in \mesh} h_K$,
where $h_K = \diam(K)$.
We denote by $\nodes$ the set of vertices of $\mesh$.
For any $K \in \mesh$, we denote by $\poly^1(K)$
the space of first-order polynomials on $K$.
We consider the space of $\mesh$-piecewise affine and globally continuous functions
\begin{equation*}
V_h
:=
\left\{v_h \in C^0(\overline{\Omega}): v_h \vert_K \in \poly^1(K) \text{ for all } K \in \mesh \right\}.
\end{equation*}
Let $\VVV_h:= (V_h)^d$ be the corresponding space of vector-valued polynomials.
We denote by $I_h$ both the nodal interpolant $I_h: C^0(\overline{\Omega}) \to V_h$
and its vector-valued counterpart $I_h: \CC^0(\overline{\Omega}) \to \VVV_h$.

For $s_h \in V_h$ and $\nn_h \in \VVV_h$, let the discrete energy be
$E^h[s_h,\nn_h] = E^h_1[s_h,\nn_h] + E^h_2[s_h]$ with
\begin{equation} \label{eq:energy_h}
E^h_1[s_h,\nn_h]
:= \frac{1}{2} \int_{\Omega} \big( \kappa \abs{\nn_h \otimes \grad s_h}^2
+ s_h^2\abs{\Grad\nn_h}^2\big),
\qquad
E^h_2[s_h] := \int_{\Omega} \psi (s_h).
\end{equation}
Note that $E^h$ is consistent,
in the sense that $E^h[s,\nn] = E[s,\nn]$ if $(s,\nn,\uu) \in \A(g,\rr)$.

We say that a triple $(s_h,\nn_h,\uu_h) \in V_h \times \VVV_h \times \VVV_h$
satisfies the \emph{discrete structural condition} if
\begin{equation} \label{eq:structural_h}
- \frac{1}{d-1} < s_h(z) < 1,
\quad
\abs{\nn(z)} \ge 1,
\quad
\text{and}
\quad
\uu_h(z) = s_h(z) \nn_h(z)
\quad
\text{for all } z \in \nodes.
\end{equation}
In~\eqref{eq:structural_h},
the requirements prescribed by the continuous structural condition~\eqref{eq:structural}
are imposed only at the vertices of the mesh, which is practical.
Moreover, the unit-length constraint for the director is relaxed,
since $\nn_h$ may attain also values outside of the unit sphere.

Let $\eps>0$, $g_h = I_h[g]$, and $\rr_h = I_h[\rr]$.
We consider the following discrete minimization problem:
Find $(s_h^*,\nn_h^*,\uu_h^*)\in\A_{h,\eps}(g_h,\rr_h)$ such that
\begin{equation}\label{eq:discrete-minimization}
(s_h^*,\nn_h^*,\uu_h^*) = \argmin_{(s_h,\nn_h,\uu_h)\in\A_{h,\eps}(g_h,\rr_h)}E_h[s_h,\nn_h],
\end{equation}
where the discrete restricted admissible class is defined as 
\begin{multline}\label{eq:admissible_h}
\A_{h,\eps}(g_h,\rr_h)
:=
\big\{ (s_h,\nn_h,\uu_h)\in V_h\times\VVV_h \times\VVV_h: \\
(s_h,\nn_h,\uu_h) \text{ satisfies }\eqref{eq:structural_h}, \
\norm[L^1(\Omega)]{I_h \big[\abs{\nn_h}^2 \big]-1} \le \eps, \\
s_h(z) = g_h(z), \text{ and }
u_h(z) = r_h(z) \text{ for all } z \in\nodes\cap\Gamma_D \big\}.
\end{multline}

In the following theorem,
we show that the discrete energy~\eqref{eq:energy_h} converges towards the continuous one~\eqref{eq:energy}
in the sense of $\Gamma$-convergence.

\begin{theorem}[$\Gamma$-convergence] \label{thm:gamma_convergence}
Suppose that $\eps \to 0$ as $h \to 0$.
Then, the following two properties are satisfied:
\begin{enumerate}[\rm(i)]
\item
Lim-sup inequality (consistency):
Let $\Gamma_D = \partial\Omega$.
Let the assumptions~\eqref{eq:DataAwayFromBoundaryS}--\eqref{eq:DoubleWellAwayFromBoundaryS} hold.
If $(s,\nn,\uu)\in\A(g,\rr)$,
then there exists a sequence $\{ (s_h,\nn_h,\uu_h) \} \subset \A_{h,\eps}(g_h,\rr_h)$
such that
$s_h\to s$ in $H^1(\Omega)$,
$\nn_h\to\nn$ in $\LL^2(\Omega\setminus\Sigma)$,
$\uu_h\to \uu$ in $\HH^1(\Omega)$,
as $h \to 0$, and
\begin{equation} \label{eq:limsup_inequality}
E[s,\nn]\ge\limsup_{h \to 0} E^h[s_h,\nn_h].
\end{equation}
\item
Lim-inf inequality (stability):
Let $\{ (s_h, \nn_h, \uu_h) \} \subset \A_{h,\eps}(g_h,\rr_h)$
be a sequence such that
$E^h[s_h,\nn_h] \le C$ and
$\norm[\LL^{\infty}(\Omega)]{\nn_h} \le C$,
where $C \ge 1$ is a constant independent of $h$.
Then,
there exist $(s,\nn,\uu)\in\A(g,\rr)$ and a subsequence of $\{ (s_h,\nn_h, \uu_h) \}$ (not relabeled)
such that $s_h \weakto s$ in $H^1(\Omega)$,
$\nn_h\to\nn$ in $\LL^2(\Omega\setminus\Sigma)$,
$\uu_h \weakto \uu$ in $\HH^1(\Omega)$
as $h \to 0$, and
\begin{equation} \label{eq:liminf_inequality}
E[s,\nn] \le \liminf_{h \to 0}E^h[s_h,\nn_h].
\end{equation}
\end{enumerate}
\end{theorem}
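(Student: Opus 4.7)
\emph{Strategy.} I would prove the two items separately, leaning on the orthogonal decomposition from Proposition~\ref{prop:L2diff} and its discrete counterpart, which is what motivates the weight $\abs{\nn_h}^2$ in front of $\abs{\grad s_h}^2$ in $E_1^h$. Two elementary observations drive the argument. First, convexity of $\abs{\cdot}^2$ yields $\abs{\nn_h}^2\le I_h[\abs{\nn_h}^2]$ pointwise, and since the vertex values $\abs{\nn_h(z)}^2\ge 1$ are linearly interpolated, $I_h[\abs{\nn_h}^2]\ge 1$ pointwise as well; together with the $L^1$ bound in \eqref{eq:admissible_h} this pins $I_h[\abs{\nn_h}^2]$, and consequently $\abs{\nn_h}^2$, to $1$ in $L^1(\Omega)$ modulo vanishing interpolation errors. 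Second, the vertex identity $\uu_h(z)=s_h(z)\nn_h(z)$ lets one turn the element-wise gradient of $\uu_h=I_h(s_h\nn_h)$ into the continuous product rule $\nn_h\otimes\grad s_h+s_h\Grad\nn_h$ up to interpolation errors, making the two terms of $E_1^h$ a discrete orthogonal decomposition of $\abs{\Grad\uu_h}^2$.

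\emph{Lim-sup (consistency).} Since in general $\nn\notin\HH^1(\Omega)$, a regularization step comes first. Given $(s,\nn,\uu)\in\A(g,\rr)$, I would construct $(s^\delta,\nn^\delta,\uu^\delta)\in\A(g,\rr)$ by (i)~truncating $s$ away from the critical values $-1/(d-1)$, $0$ and $1$, using \eqref{eq:DataAwayFromBoundaryS}--\eqref{eq:DoubleWellAwayFromBoundaryS} and the monotonicity of $\psi$ near the endpoints to control $E_2$; (ii)~redefining $\nn$ on $\{\abs{s}<\delta\}$ to any Lipschitz unit-vector extension, which costs no energy because $s^2\abs{\Grad\nn}^2$ vanishes on $\Sigma$; and (iii)~mollifying $s^\delta$ and $\uu^\delta$, and renormalizing $\nn^\delta$ to $\abs{\nn^\delta}=1$, with a boundary cutoff to match the Dirichlet data. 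Setting $s_h:=I_hs^\delta$, $\uu_h:=I_h\uu^\delta$, $\nn_h:=I_h\nn^\delta$, standard interpolation estimates for Lipschitz fields deliver the required strong convergences and the constraint $\norm[L^1(\Omega)]{I_h[\abs{\nn_h}^2]-1}\le\eps$ (since $\abs{\nn^\delta}=1$). The smoothness of the integrands on each element gives $E^h[s_h,\nn_h]\to E[s^\delta,\nn^\delta]$, and the regularization gives $E[s^\delta,\nn^\delta]\to E[s,\nn]$, so a diagonal extraction in $(h,\delta)$ yields the recovery sequence realizing \eqref{eq:limsup_inequality}.

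\emph{Lim-inf (stability).} Starting from $E^h[s_h,\nn_h]\le C$, uniform bounds come in two steps. Observation (a) together with $\int_\Omega\abs{\nn_h}^2\abs{\grad s_h}^2\le 2C/\kappa$ bounds $\grad s_h$ in $\LL^2(\Omega)$, and the Dirichlet trace on $\Gamma_D$ promotes this to a uniform $H^1(\Omega)$ bound for $s_h$; observation (b) bounds $\uu_h$ in $\HH^1(\Omega)$ by $E_1^h[s_h,\nn_h]$ plus an interpolation error. Rellich compactness then extracts weak limits $s_h\weakto s$ in $H^1$, $\uu_h\weakto\uu$ in $\HH^1$ with strong $L^2$ convergence, and the Dirichlet data pass to the limit. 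Setting $\nn:=\uu/s$ on $\Omega\setminus\Sigma$ and extending $\nn$ measurably to a unit vector on $\Sigma$ gives $(s,\nn,\uu)\in\A(g,\rr)$; $\abs{\nn}=1$ a.e.\ follows from observation (a) together with a.e.\ convergence along a further subsequence. Strong convergence $\nn_h\to\nn$ in $\LL^2(\Omega\setminus\Sigma)$ I would obtain by splitting $s_h\nn_h-s\nn=(s_h-s)\nn_h+s(\nn_h-\nn)$, replacing $s_h\nn_h$ by $\uu_h$ up to interpolation error, and using the uniform lower bound on $\abs{s}$ on compact subsets of $\Omega\setminus\Sigma$. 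Convex weak lower semicontinuity of each summand in $E^h$ then gives \eqref{eq:liminf_inequality}.

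\emph{Main obstacle.} The truly delicate step in both directions is the treatment of the singular set $\Sigma$: in the lim-sup, constructing the Lipschitz unit-vector regularization $\nn^\delta$ while keeping $\uu^\delta\in\HH^1(\Omega)$ and the Dirichlet data matched; in the lim-inf, upgrading the weak convergence of $\nn_h$ to strong convergence on $\Omega\setminus\Sigma$ without the weakly acute mesh hypothesis. There the vertex identity $\uu_h(z)=s_h(z)\nn_h(z)$---the novel ingredient of the projection-free formulation---replaces the pointwise projection onto $\sphere$ exploited in \cite{nwz2017,nochetto2018ericksen}.
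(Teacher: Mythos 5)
Your lim-inf sketch matches the paper's strategy in its essentials: coercivity from $E_1^h$ using the vertex bound $\abs{\nn_h(z)}\ge 1$, compactness, identification of the limits via the relation $\uu_h=I_h[s_h\nn_h]$, and weak lower semicontinuity. One imprecision to flag: $s_h\Grad\nn_h$ is not a convex function of the primitives, so you cannot appeal to ``convex weak lower semicontinuity of each summand'' directly. You must first identify the $\LL^2(\Omega\setminus\Sigma)$-weak limit of $s_h\Grad\nn_h$ — which the paper does by writing $s_h\Grad\nn_h=\Grad(s_h\nn_h)-\nn_h\otimes\grad s_h$, passing $s_h\nn_h\weakto\uu$ in $\HH^1$ and $\nn_h\otimes\grad s_h\weakto\nn\otimes\grad s$ in $\LL^2(\Omega\setminus\Sigma)$ using the strong $\LL^2(\Omega\setminus\Sigma)$ convergence of $\nn_h$ — and only then invoke weak lower semicontinuity of the $L^2$ norm. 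This is a bookkeeping, not a conceptual, gap.

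Your lim-sup sketch, however, contains a genuine obstruction that cannot be fixed within your strategy. Step (ii), ``redefining $\nn$ on $\{\abs{s}<\delta\}$ to any Lipschitz unit-vector extension,'' is impossible for topological reasons precisely in the cases the Ericksen model is built to capture. Take the 2D radial point defect $\nn(x)=x/\abs{x}$ with $s$ vanishing only at the origin: on the boundary circle of $\{\abs{s}<\delta\}$, $\nn$ has winding number one, so it admits no continuous — a fortiori no Lipschitz — unit-vector extension to the disc. Mollifying and then renormalizing does not help; the obstruction is homotopical, not one of smoothness. A secondary issue is step (i): truncating $s$ away from $0$ either creates a jump across the zero level set (when $s$ changes sign), pushing $s_\delta$ out of $H^1(\Omega)$, or, if done one-sidedly, fails to give $s_\delta\to s$ in $H^1(\Omega)$. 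The paper avoids both difficulties by never regularizing $\nn$ to be globally Lipschitz: Lemma~\ref{lem:regularization} yields $s_\delta\in W^{1,\infty}(\Omega)$ and $\uu_\delta\in\WW^{1,\infty}(\Omega)$ while leaving $\nn_\delta$ possibly singular on $\Sigma_\delta=\{s_\delta=0\}$, and Lemma~\ref{lem:truncation} truncates $s$ only near the endpoints $-1/(d-1)$ and $1$, not near $0$. The discrete director is then defined at vertices by the quotient $\nn_{k,h}(z)=\uu_{k,h}(z)/s_{k,h}(z)$ away from $\Sigma_k$ (not by interpolating a fixed-up $\nn$), and the energy convergence is established by splitting $\Omega$ into the union $\Omega_{k,\delta}^h$ of elements away from the thickened singular set $\Sigma_{k,\delta}$ — where quotient interpolation estimates apply — and the complement $\Omega\setminus\Omega_{k,\delta}^h\subset\Sigma_{k,2\delta}$, where the energy contributions are shown to vanish using the $W^{1,p}$ stability of the Lagrange interpolant for $p>d$ (Lemma~\ref{lem:stab-Ih}). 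This dichotomy is the actual content of the lim-sup proof; your sketch does not engage with it.
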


The proof of Theorem~\ref{thm:gamma_convergence} is deferred to Sections~\ref{sec:limsup}--\ref{sec:liminf}.
The properties established in Theorem~\ref{thm:gamma_convergence}
are slight variations of the properties required by the standard definition of $\Gamma$-convergence;
see, e.g., \cite[Definition~1.5]{braides2002}.
However,
they still allow
to prove the convergence of discrete global minimizers.

\begin{corollary}[convergence of discrete global minimizers] \label{cor:minimizers}
Let $\Gamma_D = \partial\Omega$ and suppose that
the assumptions~\eqref{eq:DataAwayFromBoundaryS}--\eqref{eq:DoubleWellAwayFromBoundaryS} hold.
Let $\{ (s_h,\nn_h, \uu_h) \} \subset \A_{h,\eps}(g_h,\rr_h)$
be a sequence of global minimizers of the discrete energy~\eqref{eq:energy_h}
such that $\norm[\LL^{\infty}(\Omega)]{\nn_h} \le C$,
where $C \ge 1$ is a constant independent of $h$.
Then, every cluster point $(s,\nn,\uu)$ belongs to $\A(g,\rr)$
and is a global minimizer of the continuous energy~\eqref{eq:energy}.
\end{corollary}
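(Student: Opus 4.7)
The plan is to follow the standard $\Gamma$-convergence deduction of convergence of minimizers, carefully pairing the two halves of Theorem~\ref{thm:gamma_convergence}. First I would secure a uniform energy bound on the given sequence of discrete minimizers. The assumptions~\eqref{eq:DataAwayFromBoundaryS}--\eqref{eq:DoubleWellAwayFromBoundaryS} are tailored so that the extended Dirichlet data themselves form an admissible triple $(g,\qq,\rr)\in\A(g,\rr)$ of finite continuous energy $E[g,\qq]<\infty$: the gradient terms are bounded by the $W^{1,\infty}$ regularity of $g$ and $\rr$ together with~\eqref{eq:equiv-energy}, while $\psi(g)$ is bounded thanks to \eqref{eq:DataAwayFromBoundaryS} and the monotonicity built into~\eqref{eq:DoubleWellAwayFromBoundaryS}. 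Applying the lim-sup inequality (i) of Theorem~\ref{thm:gamma_convergence} to this triple would produce a recovery sequence $(\wt s_h,\wt \nn_h,\wt \uu_h)\in\A_{h,\eps}(g_h,\rr_h)$ with $\limsup_{h\to 0}E^h[\wt s_h,\wt \nn_h]\le E[g,\qq]$, and discrete global minimality of $(s_h,\nn_h,\uu_h)$ gives $E^h[s_h,\nn_h]\le E^h[\wt s_h,\wt \nn_h]$ for every $h$, hence the desired uniform bound.

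With this bound in hand, the hypothesis $\norm[\LL^\infty(\Omega)]{\nn_h}\le C$ puts us precisely in the setting of the lim-inf inequality (ii) of Theorem~\ref{thm:gamma_convergence}. I would extract a (non-relabelled) subsequence converging to a cluster point $(s,\nn,\uu)\in\A(g,\rr)$ in the senses listed in the theorem and obtain
\[
E[s,\nn] \le \liminf_{h\to 0} E^h[s_h,\nn_h].
\]
To upgrade $(s,\nn,\uu)$ to a global minimizer, I would pick an arbitrary competitor $(\bar s,\bar \nn,\bar \uu)\in\A(g,\rr)$, apply the lim-sup property once more to produce a sequence $(\bar s_h,\bar \nn_h,\bar \uu_h)\in\A_{h,\eps}(g_h,\rr_h)$ with $\limsup_{h\to 0} E^h[\bar s_h,\bar \nn_h]\le E[\bar s,\bar \nn]$, exploit discrete minimality $E^h[s_h,\nn_h]\le E^h[\bar s_h,\bar \nn_h]$, and chain inequalities
\[
E[s,\nn] \le \liminf_{h\to 0} E^h[s_h,\nn_h] \le \limsup_{h\to 0} E^h[\bar s_h,\bar \nn_h] \le E[\bar s,\bar \nn].
\]
Since $(\bar s,\bar \nn,\bar \uu)$ was arbitrary, $(s,\nn,\uu)$ minimizes $E$ on $\A(g,\rr)$.

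The argument is conceptually the textbook ``$\Gamma$-convergence plus compactness implies convergence of minimizers'' template, so nothing here is genuinely hard. The only real check I anticipate is the very first step, where one must verify that assumptions~\eqref{eq:DataAwayFromBoundaryS}--\eqref{eq:DoubleWellAwayFromBoundaryS} genuinely turn the Dirichlet data themselves into a legitimate admissible competitor with finite continuous energy; this is what allows the lim-sup property to seed the initial uniform energy bound on the discrete minimizers, without which the lim-inf part of Theorem~\ref{thm:gamma_convergence} could not be invoked. The mild caveat that convergence takes place only along a subsequence is already acknowledged in the statement by the phrase ``every cluster point''.
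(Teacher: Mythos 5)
Your proposal is correct and follows the standard ``$\Gamma$-convergence plus equicoercivity implies convergence of minimizers'' template, which is exactly what the authors invoke: the paper states Corollary~\ref{cor:minimizers} without a written proof, presenting it as a direct consequence of Theorem~\ref{thm:gamma_convergence}, so there is no alternative argument to compare against. Your three steps — seeding the uniform energy bound by feeding the Lipschitz Dirichlet triple $(g,\qq,\rr)\in\A(g,\rr)$ (which has finite continuous energy thanks to its $W^{1,\infty}$ regularity, \eqref{eq:equiv-energy}, and the bound $\psi(g)\le\max_{[-1/(d-1)+\delta_0,\,1-\delta_0]}\psi$) into the lim-sup property, then extracting a subsequence via the lim-inf property, then chaining discrete minimality against a recovery sequence for an arbitrary competitor — are precisely the right ingredients. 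One small point worth making explicit: the lim-inf half of Theorem~\ref{thm:gamma_convergence} produces \emph{some} cluster point in $\A(g,\rr)$ along a further subsequence, so to conclude about a \emph{given} cluster point you should start from the subsequence that realizes it and note that the limits produced by the theorem coincide with it by uniqueness of limits in $H^1$-weak, $\LL^2(\Omega\setminus\Sigma)$-strong, and $\HH^1$-weak; this also handles the implicit hypothesis $\eps\to0$ as $h\to0$ inherited from the theorem.
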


\section{Computation of discrete local minimizers} \label{sec:algorithm}

In this section,
we propose an effective algorithm to compute discrete local minimizers of~\eqref{eq:energy_h}.
The method is based on a discretization of the energy-decreasing dynamics driven
by the system of gradient flows
\begin{align*}
\partial_t \nn + \delta_{\nn} E^h[s,\nn] & = 0,\\
\partial_t s + \delta_{s} E^h[s,\nn] & = 0,
\end{align*}
where $\delta_{\nn} E^h[s,\nn]$ and $\delta_{s} E^h[s,\nn]$ denote
the G\^ateaux derivatives of the energy with respect to the order parameters, i.e.,
\begin{align*}
\big\langle \delta_{\nn} E^h[s,\nn] , \pphi \big\rangle
& = \big\langle \delta_{\nn} E_1^h[s,\nn] , \pphi \big\rangle
= \kappa \inner{\nn \otimes \grad s}{\pphi \otimes \grad s}
+ \inner{s \Grad \nn}{s \Grad \pphi}, \\
\big\langle \delta_{s} E^h[s,\nn] , w \big\rangle
& = \big\langle \delta_{s} E_1^h[s,\nn] , w \big\rangle
+ \big\langle \delta_{s} E_2^h[s,\nn] , w \big\rangle \\
& = \kappa \inner{\nn \otimes \grad s}{\nn \otimes \grad w}
+ \inner{s \Grad \nn}{w \Grad \nn}
+ \inner{\psi'(s)}{w}.
\end{align*}
Let us introduce the ingredients of the scheme.
First, let
\begin{equation*}
V_{h,D} := \{ v_h \in V_h : ~ v_h(z) = 0 \text{ for all } z \in \nodes \cap \Gamma_D \}
\quad
\text{and}
\quad
\VVV_{h,D} := (V_{h,D})^d
\end{equation*}
be the spaces of discrete functions satisfying homogeneous Dirichlet conditions on $\Gamma_D$.
Given $\nn_h \in \VVV_h$, we consider the subspace of $\VVV_{h,D}$
consisting of all discrete functions
with nodal values
orthogonal to those of $\nn_h$ at all vertices:
\begin{equation*}
\dts[\nn_h] := \left\{\pphi_h \in \VVV_{h,D}: ~\nn_h(z)\cdot\pphi_h(z) = 0 \text{ for all } z \in \mathcal{N}_h \right\}.
\end{equation*}
For the treatment of the double well potential,
we follow a convex splitting approach (see, e.g., \cite{wwl2009}):
we assume the splitting $\psi = \psi_c - \psi_e$,
where $\psi_c$ and $\psi_e$ are both convex and $\psi_c$ is quadratic.

The \emph{time} discretization
of the gradient flow for the director and the degree of orientation
are based on the constant time-step sizes $\tau_{\nn}>0$ and $\tau_s>0$, respectively.
Moreover, we consider the difference quotient $d_t s_h^{i+1}:= (s_h^{i+1}-s_h^i) / \tau_s$.

In the following algorithm,
we state the proposed numerical scheme for the computation of
discrete local minimizers of~\eqref{eq:energy_h}.
We assume that assumption~\eqref{eq:DataAwayFrom0} is satisfied so that imposing
Dirichlet boundary conditions directly for the director is allowed.
Let $\tol>0$ denote a tolerance.

\begin{algorithm}[alternating direction discrete gradient flow] \label{alg:gradient_flow}
\underline{Input}:
$s_h^0 \in V_h$, $\nn_h^0 \in \VVV_h$
such that
$\abs{\nn_h^0(z)} = 1$ for all $z \in \nodes$,
$\nn_h^0(z) = \rr_h(z) / g_h(z)$ and $s_h^0(z)=g_h(z)$ for all $z\in\nodes\cap\Gamma_D$.\\
\underline{Outer loop}:
For all $i \in \N_0$, iterate {\rm(i)}--{\rm(ii)}:
\begin{itemize}
\item[\rm(i)] \underline{Inner loop}:
Given $(\nn_h^i,s_h^i)$, let $\nn_h^{i,0}=\nn_h^i$.
For all $\ell \in \N_0$, iterate {\rm(i-a)}--{\rm(i-b)}:
\begin{itemize}
\item[\rm(i-a)]
Compute $\tt_h^{i,\ell} \in \dts\big[\nn_h^{i,\ell}\big]$ such that
\begin{equation} \label{eq:gradientflow1}
\begin{aligned}
 \inner[*]{\tt_h^{i,\ell}}{\pphi_h}
& + \tau_{\nn} \, \kappa \inner{ \tt_h^{i,\ell}  \otimes \grad s_h^i}{\pphi_h \otimes \grad s_h^i}
+ \tau_{\nn} \inner{s_h^i \Grad \tt_h^{i,\ell}}{s_h^i \Grad \pphi_h} \\
& = - \kappa \inner{ \nn_h^{i,\ell}  \otimes \grad s_h^i}{\pphi_h \otimes \grad s_h^i}
- \inner{s_h^i \Grad \nn_h^{i,\ell}}{s_h^i \Grad \pphi_h}
\end{aligned}
\end{equation}
for all $\pphi_h \in \dts\big[\nn_h^{i,\ell}\big]$;
\item[\rm(i-b)] Update $\nn^{i,\ell+1}_h:=\nn^{i,\ell}_h + \tau_{\nn} \, \tt_h^{i,\ell}$;
\end{itemize}
until
\begin{equation} \label{eq:gradientflow2_stopping}
\big\lvert E_1^h[s_h^i,\nn^{i,\ell+1}_h]-E_1^h[s_h^i,\nn^{i,\ell}_h] \big\rvert < \tol.
\end{equation}
If $\ell_i \in \N_0$ denotes the smallest integer for which the stopping criterion~\eqref{eq:gradientflow2_stopping} is satisfied,
define $\nn_h^{i+1}:=\nn^{i,\ell_i+1}_h$.

\item[{\rm(ii)}] Compute $s_h^{i+1} \in V_h$ such that $s_h^{i+1}(z)=g_h(z)$ for all $z\in\nodes\cap\Gamma_D$ and
\begin{equation} \label{eq:gradientflow3}
\begin{aligned}
  \inner{d_t s_h^{i+1}}{w_h}
& + \kappa \inner{ \nn_h^{i+1}  \otimes \grad s_h^{i+1}}{\nn_h^{i+1} \otimes \grad w_h} \\
& + \inner{s_h^{i+1} \Grad \nn_h^{i+1}}{w_h \Grad \nn_h^{i+1}}
  + \inner{\psi_c'(s_h^{i+1})}{w_h}
= \inner{\psi_e'(s_h^i)}{w_h}
\end{aligned}
\end{equation}
for all $w_h \in V_{h,D}$.
\end{itemize} 
\underline{Output}:
Sequence of approximations $\left\{ (s_h^i,\nn_h^i)\right\}_{i \in \N_0}$.
\end{algorithm}

In Algorithm~\ref{alg:gradient_flow}, $\inner[*]{\cdot}{\cdot}$ denotes the scalar product of the metric used in
the discrete gradient flow~\eqref{eq:gradientflow1} for the director.
In this work, we consider the following two choices for $\inner[*]{\cdot}{\cdot}$:
\begin{align}
\inner[*]{\pphi}{\ppsi}
& =
\inner{\pphi}{\ppsi}
&&\text{($L^2$-metric)},
\label{eq:metricL2}
\\
\inner[*]{\pphi}{\ppsi}
& =
\inner{h^{\alpha} \Grad\pphi}{\Grad\ppsi},
\quad
\text{with } 0<\alpha\le2.
&&\text{(weighted $H^1$-metric)},
\label{eq:metricH1_weighted}
\end{align}
Note that in~\eqref{eq:metricH1_weighted} the choice $\alpha=0$ corresponds to a full $H^1$-gradient flow,
which is not appropriate since the director does not belong to $\HH^1(\Omega)$ in general
(e.g., in the presence of defects).
On the other hand, if $\alpha=2$, the resulting metric is equivalent to the $L^2$-metric in~\eqref{eq:metricL2}.
In addition, both~\eqref{eq:gradientflow1} and~\eqref{eq:gradientflow3} are linear symmetric positive definite systems
in the unknowns $\tt_h^{i,\ell}$ and $s_h^{i+1}$.

Although in most of our numerical experiments we will set $\tau_{\nn} = \tau_s$,
we observed that in some situations the flexibility of choosing different time-step sizes
in~\eqref{eq:gradientflow1} and~\eqref{eq:gradientflow3}
is decisive in order to move defects in numerical simulations
(see, e.g., the experiment in Section~\ref{sec:cylinder_ex} below).

In the following proposition,
we prove well-posedness and an energy-decreasing property of Algorithm~\ref{alg:gradient_flow}.

\begin{proposition}[properties of Algorithm~\ref{alg:gradient_flow}] \label{prop:scheme}
Algorithm~\ref{alg:gradient_flow} is well-posed and energy decreasing.
Specifically, for all $i \in \N_0$,
the following assertions hold:
\begin{itemize}
\item[\rm(i)] For all $\ell \in \N_0$, \eqref{eq:gradientflow1} admits a unique solution $\tt_h^{i,\ell} \in \dts\big[\nn_h^{i,\ell}\big]$;
\item[\rm(ii)] The inner loop terminates in a finite number of iterations, i.e., there exists $\ell \in \N_0$
such that the stopping criterion~\eqref{eq:gradientflow2_stopping} is met;
\item[\rm(iii)] \eqref{eq:gradientflow3} admits a unique solution
$s_h^{i+1} \in V_h$ such that $s_h^{i+1}(z)=g_h(z)$ for all $z\in\nodes\cap\Gamma_D$.
\item[\rm(iv)] There holds
\begin{equation} \label{eq:energy_decrease}
\begin{split}
E^h[s_h^{i+1},\nn_h^{i+1}]
- E^h[s_h^i,\nn_h^i]
& \leq
- \left(
\tau_s\norm[L^2(\Omega)]{d_t s_h^{i+1}}^2
+
\tau_{\nn} \sum_{\ell=0}^{\ell_i}\norm[*]{\tt_h^{i,\ell}}^2
\right) \\
& \quad 
- \left( \tau_s^2 \, E_1^h[d_t s_h^{i+1},\nn_h^{i+1}] + \tau_{\nn}^2 \sum_{\ell=0}^{\ell_i}E_1^h[s_h^i,\tt_h^{i,\ell}] \right).
\end{split}
\end{equation}
In particular, $E^h[s_h^{i+1},\nn_h^{i+1}] \le E^h[s_h^i,\nn_h^i]$
and equality holds if and only if $(s_h^{i+1},\nn_h^{i+1}) = (s_h^i,\nn_h^i)$ (equilibrium state).
\end{itemize}
\end{proposition}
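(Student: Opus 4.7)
The four claims are proved in order. Parts (i) and (iii) are well-posedness statements for symmetric linear systems on finite-dimensional spaces; part (ii) follows from monotone energy decay along the inner loop; the core of the argument is (iv).

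For (i), the bilinear form
\[
a(\tt,\pphi) := \inner[*]{\tt}{\pphi} + \tau_{\nn}\kappa\inner{\tt\otimes\grad s_h^i}{\pphi\otimes\grad s_h^i} + \tau_{\nn}\inner{s_h^i\Grad\tt}{s_h^i\Grad\pphi}
\]
on the finite-dimensional subspace $\dts[\nn_h^{i,\ell}]\subset\VVV_{h,D}$ is symmetric, with the last two summands non-negative and $\inner[*]{\cdot}{\cdot}$ positive definite on $\VVV_{h,D}$: this is immediate for the $L^2$ metric \eqref{eq:metricL2}, whereas for the weighted $H^1$ metric \eqref{eq:metricH1_weighted} it uses that non-zero constants are excluded from $\VVV_{h,D}$ because $\abs{\Gamma_D}>0$. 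Claim (iii) is analogous: writing $s_h^{i+1}=g_h+\wt{s}_h$ with $\wt{s}_h\in V_{h,D}$, the bilinear form for $\wt{s}_h$ contains the coercive contribution $\tau_s^{-1}\inner{\cdot}{\cdot}$, while $\psi_c'$ is affine with non-negative slope $\psi_c''\ge 0$ (since $\psi_c$ is quadratic and convex) and the elastic terms are non-negative. In both cases, Lax--Milgram yields a unique solution.

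For (ii), I test \eqref{eq:gradientflow1} with $\pphi_h=\tt_h^{i,\ell}$ and expand the quadratic energy in the direction of the increment. Setting $b(\nn,\tt) := \kappa\inner{\nn\otimes\grad s_h^i}{\tt\otimes\grad s_h^i} + \inner{s_h^i\Grad\nn}{s_h^i\Grad\tt}$, the equation gives $b(\nn_h^{i,\ell},\tt_h^{i,\ell}) = -\norm[*]{\tt_h^{i,\ell}}^2 - 2\tau_{\nn}E_1^h[s_h^i,\tt_h^{i,\ell}]$, so that
\begin{equation*}
E_1^h[s_h^i,\nn_h^{i,\ell+1}] - E_1^h[s_h^i,\nn_h^{i,\ell}]
= \tau_{\nn}b(\nn_h^{i,\ell},\tt_h^{i,\ell}) + \tau_{\nn}^2 E_1^h[s_h^i,\tt_h^{i,\ell}]
= -\tau_{\nn}\norm[*]{\tt_h^{i,\ell}}^2 - \tau_{\nn}^2 E_1^h[s_h^i,\tt_h^{i,\ell}] \le 0.
\end{equation*}
Thus $E_1^h[s_h^i,\nn_h^{i,\ell}]$ is non-increasing in $\ell$ and bounded below by $0$, hence convergent, so the consecutive differences drop below $\tol$ after finitely many iterations.

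For (iv), I test \eqref{eq:gradientflow3} with $w_h=s_h^{i+1}-s_h^i\in V_{h,D}$, which is admissible because both $s_h^{i+1}$ and $s_h^i$ equal $g_h$ on $\nodes\cap\Gamma_D$. The polarization identity $2\inner{a}{a-b}=\norm{a}^2-\norm{b}^2+\norm{a-b}^2$ applied to the two quadratic elastic terms with $\nn_h^{i+1}$ frozen rewrites them as $E_1^h[s_h^{i+1},\nn_h^{i+1}]-E_1^h[s_h^i,\nn_h^{i+1}]+\tau_s^2 E_1^h[d_t s_h^{i+1},\nn_h^{i+1}]$, while the convex splitting inequalities $\psi_c'(a)(a-b)\ge\psi_c(a)-\psi_c(b)$ and $-\psi_e'(b)(a-b)\ge\psi_e(b)-\psi_e(a)$ yield $\inner{\psi_c'(s_h^{i+1})-\psi_e'(s_h^i)}{s_h^{i+1}-s_h^i}\ge E_2^h[s_h^{i+1}]-E_2^h[s_h^i]$. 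Combining gives
\[
E^h[s_h^{i+1},\nn_h^{i+1}]-E^h[s_h^i,\nn_h^{i+1}]\le -\tau_s\norm[L^2(\Omega)]{d_t s_h^{i+1}}^2-\tau_s^2 E_1^h[d_t s_h^{i+1},\nn_h^{i+1}].
\]
Summing the identity from (ii) over $\ell=0,\dots,\ell_i$ (with $E_2^h$ unchanged since $s_h^i$ is frozen through the inner loop) produces
\[
E^h[s_h^i,\nn_h^{i+1}]-E^h[s_h^i,\nn_h^i]=-\sum_{\ell=0}^{\ell_i}\bigl(\tau_{\nn}\norm[*]{\tt_h^{i,\ell}}^2+\tau_{\nn}^2 E_1^h[s_h^i,\tt_h^{i,\ell}]\bigr).
\]
Adding the two estimates yields \eqref{eq:energy_decrease}. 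Since every term on the right-hand side of \eqref{eq:energy_decrease} is non-positive, equality forces each to vanish: all $\tt_h^{i,\ell}=0$ (so $\nn_h^{i+1}=\nn_h^i$) and $d_t s_h^{i+1}=0$ (so $s_h^{i+1}=s_h^i$); the converse is trivial. The main obstacle is (iv), where the polarization identity for $E_1^h$ with $\nn_h^{i+1}$ frozen, the convex splitting of $\psi$, and the inner-loop identity must combine precisely to produce the two quadratic-in-increment correction terms appearing in \eqref{eq:energy_decrease}.
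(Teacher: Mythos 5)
Your proposal is correct and follows essentially the same path as the paper's proof: coercivity/Lax--Milgram for (i) and (iii), the quadratic expansion $E_1^h[s_h^i,\nn_h^{i,\ell}+\tau_{\nn}\tt_h^{i,\ell}]-E_1^h[s_h^i,\nn_h^{i,\ell}]=-\tau_{\nn}\norm[*]{\tt_h^{i,\ell}}^2-\tau_{\nn}^2E_1^h[s_h^i,\tt_h^{i,\ell}]$ (the paper phrases it via the identity $2a(a-b)=a^2-b^2+(a-b)^2$ after testing with $\tau_{\nn}\tt_h^{i,\ell}$, but this is the same computation) and monotone bounded decay for (ii), and for (iv) the same combination of testing \eqref{eq:gradientflow3} with $s_h^{i+1}-s_h^i$, the convex-splitting inequality, and the summed inner-loop identity. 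The only cosmetic differences are that you prove the convex-splitting inequality directly from first-order convexity bounds where the paper cites \cite[Lemma~4.1]{nwz2017}, and that you spell out the positive-definiteness of the weighted $H^1$ metric and the ``equality iff equilibrium'' argument, which the paper leaves implicit.
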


\begin{remark}[energy decrease]
The right-hand side of~\eqref{eq:energy_decrease}
characterizes the energy decrease
guaranteed by each step of Algorithm~\ref{alg:gradient_flow}
and comprises two contributions:
The term
\begin{equation*}
- \left(
\tau_s\norm[L^2(\Omega)]{d_t s_h^{i+1}}^2
+
\tau_{\nn} \sum_{\ell=0}^{\ell_i}\norm[*]{\tt_h^{i,\ell}}^2
\right)
\end{equation*}
is the energy decrease due to the gradient-flow nature
of Algorithm~\ref{alg:gradient_flow}.
The term
\begin{equation*}
- \left( \tau_s^2 \, E_1^h[d_t s_h^{i+1},\nn_h^{i+1}] + \tau_{\nn}^2 \sum_{\ell=0}^{\ell_i}E_1^h[s_h^i,\tt_h^{i,\ell}] \right)
\end{equation*}
is the numerical dissipation due to the backward Euler methods used
for the time discretization.
\end{remark}

In practical implementations of Algorithm~\ref{alg:gradient_flow},
the outer loop is terminated when
\begin{equation} \label{eq:gradientflow_stopping}
\big\lvert E^h[s_h^{i+1},\nn^{i+1}_h]-E^h[s_h^i,\nn_h^i] \big\rvert < \tol.
\end{equation}
Since the algorithm fulfills a monotone energy-decreasing property
(see Proposition~\ref{prop:scheme}(iv)),
the stopping criterion is met in a finite number of iterations.

The approximations $\nn^{i+1}_h$ of the director generated by Algorithm~\ref{alg:gradient_flow}
do not satisfy the unit-length constraint at the vertices of the mesh, as in
\cite{nwz2017,nochetto2018ericksen}.
However, the following proposition, proved in Section~\ref{sec:prop-scheme},
shows that violation of this constraint can be controlled by the time-step size $\tau_{\nn}$,
independently of the number of iterations.
Moreover, the uniform boundedness in $\LL^{\infty}(\Omega)$ of the sequence
can be guaranteed if the discretization parameters are chosen appropriately.

\begin{proposition}[properties of discrete director field] \label{prop:properties_n}
Let $j \ge 1$. The following holds.
\begin{enumerate}[\rm(i)]
\item
Suppose that the norm induced by the metric $\inner[*]{\cdot}{\cdot}$ used in~\eqref{eq:gradientflow1}
is an upper bound for the $L^2$-norm, i.e.,
there exists $C_*>0$ such that
\begin{equation} \label{eq:metric_norm}
\norm[\LL^2(\Omega)]{\pphi_h} \le C_* \norm[*]{\pphi_h}
\quad
\text{for all }
\pphi_h \in \VVV_{h,D}.
\end{equation}
Then, the approximations generated by Algorithm~\ref{alg:gradient_flow} satisfy
\begin{equation} \label{eq:L1error}
\norm[L^1(\Omega)]{I_h\big[\abs{\nn_h^j}^2-1\big]}
\le
C_1 \tau_{\nn} \, E^h[s_h^0,\nn_h^0],
\end{equation}
where $C_1>0$ depends only on $C_*$ and the shape-regularity of $\{ \mesh \}$.

\item
Suppose $\tau_{\nn}$ fulfills the following CFL-type condition
\begin{equation} \label{eq:cfl}
\begin{aligned}
\tau_{\nn} h_{\min}^{-d} \le C^*
&\quad
\text{if $\inner[*]{\cdot}{\cdot}$ is chosen as~\eqref{eq:metricL2}}, \\
\tau_{\nn} h_{\min}^{2-d-\alpha} |\log h_{\min}|^2 \le C^*
&\quad
\text{if $\inner[*]{\cdot}{\cdot}$ is chosen as~\eqref{eq:metricH1_weighted}},
\end{aligned}
\end{equation}
where $h_{\min}:=\min_{K\in\mesh}h_K$ and $C^*>0$ is arbitrary.
Then, the approximations generated by Algorithm~\ref{alg:gradient_flow} satisfy
\begin{equation} \label{eq:Linfty_bound}
\norm[\LL^\infty(\Omega)]{\nn_h^j}
\le
1 + C_2 E^h[s_h^0,\nn_h^0],
\end{equation}
where $C_2>0$ is proportional to $C^*>0$ in~\eqref{eq:cfl} with proportionality
constant depending on the shape-regularity of $\{\mesh\}$.
\end{enumerate}
\end{proposition}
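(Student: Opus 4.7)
The plan is to exploit the nodewise orthogonality built into the inner loop. Since $\tt_h^{i,\ell} \in \dts[\nn_h^{i,\ell}]$, at each vertex $z \in \nodes$ the update rule of step (i-b), combined with $\nn_h^{i,\ell}(z)\cdot\tt_h^{i,\ell}(z)=0$, yields the discrete Pythagorean identity $\abs{\nn_h^{i,\ell+1}(z)}^2 = \abs{\nn_h^{i,\ell}(z)}^2 + \tau_{\nn}^2\, \abs{\tt_h^{i,\ell}(z)}^2$. Iterating over all inner and outer steps up to iteration $j$ and using $\abs{\nn_h^0(z)}=1$, I obtain
\begin{equation*}
\abs{\nn_h^j(z)}^2 - 1 = \tau_{\nn}^2 \sum_{i=0}^{j-1} \sum_{\ell=0}^{\ell_i} \abs{\tt_h^{i,\ell}(z)}^2 \ge 0 \quad \text{for all } z \in \nodes,
\end{equation*}
which is the cornerstone of both (i) and (ii) (and, as a byproduct, confirms $\abs{\nn_h^j(z)}\geq 1$, consistent with \eqref{eq:structural_h}). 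The second ingredient is the telescopic consequence of \eqref{eq:energy_decrease}, namely $\tau_{\nn}\sum_{i,\ell}\norm[*]{\tt_h^{i,\ell}}^2 \le E^h[s_h^0,\nn_h^0]$, obtained by summing \eqref{eq:energy_decrease} in $i$ and discarding the nonpositive remaining contributions (numerical dissipation and $s$-term).

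For (i), nonnegativity of the nodal values of $\abs{\nn_h^j}^2-1$ lets me write $\norm[L^1(\Omega)]{I_h[\abs{\nn_h^j}^2-1]} = \sum_{z\in\nodes} (\abs{\nn_h^j(z)}^2-1) \int_\Omega \phi_z$, where $\phi_z$ is the nodal hat function at $z$. Substituting the Pythagorean identity and using the mass-lumping equivalence $\sum_{z\in\nodes} \abs{v_h(z)}^2 \int_\Omega \phi_z \simeq \norm[\LL^2(\Omega)]{v_h}^2$ on shape-regular meshes reduces the task to bounding $\tau_{\nn}^2\sum_{i,\ell}\norm[\LL^2(\Omega)]{\tt_h^{i,\ell}}^2$. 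Hypothesis \eqref{eq:metric_norm} converts the $\LL^2$-norm into $\norm[*]{\cdot}$ at the cost of $C_*$, and a single factor $\tau_{\nn}$ is absorbed by the telescopic energy bound, yielding \eqref{eq:L1error}.

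For (ii), since $\nn_h^j$ is piecewise affine, $\norm[\LL^\infty(\Omega)]{\nn_h^j} = \max_{z\in\nodes} \abs{\nn_h^j(z)}$, and the Pythagorean identity gives $\max_z \abs{\nn_h^j(z)}^2 \le 1 + \tau_{\nn}^2 \sum_{i,\ell} \norm[\LL^\infty(\Omega)]{\tt_h^{i,\ell}}^2$. The task thus reduces to estimating $\norm[\LL^\infty(\Omega)]{\tt_h^{i,\ell}}$ by $\norm[*]{\tt_h^{i,\ell}}$. For the $L^2$-metric \eqref{eq:metricL2}, the classical inverse inequality $\norm[\LL^\infty(\Omega)]{\tt_h}\lesssim h_{\min}^{-d/2}\norm[\LL^2(\Omega)]{\tt_h}$ applies directly. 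For the weighted $H^1$-metric \eqref{eq:metricH1_weighted}, I would invoke a discrete Sobolev inequality with logarithmic loss (explicit in 2D, combined with an inverse estimate in 3D) to obtain $\norm[\LL^\infty(\Omega)]{\tt_h}^2 \lesssim h_{\min}^{2-d}\abs{\log h_{\min}}^2\,\norm[\LL^2(\Omega)]{\Grad\tt_h}^2 = h_{\min}^{2-d-\alpha}\abs{\log h_{\min}}^2\,\norm[*]{\tt_h}^2$. Factoring out one $\tau_{\nn}$, the CFL condition \eqref{eq:cfl} bounds the resulting prefactor by $C^*$, so the telescopic energy bound produces $\tau_{\nn}^2\sum_{i,\ell}\norm[\LL^\infty(\Omega)]{\tt_h^{i,\ell}}^2 \le C^* E^h[s_h^0,\nn_h^0]$; the elementary inequality $\sqrt{1+x}\le 1+x/2$ finally gives \eqref{eq:Linfty_bound}.

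The main obstacle will be calibrating the exponents and logarithmic factors in the inverse/discrete-Sobolev inequalities uniformly in $d\in\{2,3\}$ so that they are exactly absorbed by the CFL condition \eqref{eq:cfl}; once this is settled, the nodal Pythagorean identity together with the telescopic energy bound extracted from Proposition~\ref{prop:scheme}(iv) reduces the argument to routine bookkeeping.
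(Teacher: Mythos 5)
Your proposal is correct and follows essentially the same path as the paper's own proof: the nodal Pythagorean identity from the tangential update, the telescopic bound $\tau_{\nn}\sum_{i,\ell}\norm[*]{\tt_h^{i,\ell}}^2 \le E^h[s_h^0,\nn_h^0]$ from summing \eqref{eq:energy_decrease}, the mass-lumping equivalence for part~(i), and global inverse/discrete-Sobolev estimates plus the CFL condition for part~(ii). The only cosmetic difference is that the paper routes the weighted-$H^1$ case through $\norm[\HH^1(\Omega)]{\tt_h^{i,\ell}}$ plus Poincar\'e rather than appealing directly to a discrete Sobolev inequality in terms of $\norm[\LL^2(\Omega)]{\Grad\tt_h^{i,\ell}}$, but these are interchangeable.
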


To conclude this section, we discuss the structure of Algorithm~\ref{alg:gradient_flow}
with special emphasis on its nested structure and distinct roles of $\tau_{\nn}$ and $\tau_s$. Obviously, $\tau_{\nn}$ controls the violation of the unit-length constraint according to \eqref{eq:L1error}, but the roles of subiterations in \eqref{eq:gradientflow1} and $\tau_s$ in \eqref{eq:gradientflow3} is more subtle and deserves further elaboration. The presence of defects is associated with values $s_h^i(x_j)$ close to zero at nodes $x_j$, which in turn act as weights in the equation \eqref{eq:gradientflow1} for the tangential updates $\tt_h^{i,\ell}$ of the director field $\nn_h^{i,\ell}$. The fast decrease to zero of $s_h^i(x_j)$, relative to the growth of $\Grad\nn_h^i$ in its vicinity, impedes further changes of $\nn_h^i(x_j)$ because they are not energetically favorable: The defect is thus pinned at the same location $x_j$ for many interations. Experiments with Algorithm \ref{alg:gradient_flow} reveal  defect pinning if $\tau_{\nn}=\tau_s$ and one step of \eqref{eq:gradientflow1} per step of \eqref{eq:gradientflow3} is utilized. The subiterations within the inner loop \eqref{eq:gradientflow1} allow $\nn_h^{i,\ell}$ to adjust to the current value of $s_h^i$. This mimics an approximate optimization step but with unit length and max norm control dictated by Proposition \ref{prop:properties_n}. In contrast, full optimization has been proposed in \cite{nwz2017,nochetto2018ericksen,walker2020} instead of \eqref{eq:gradientflow1}, followed by nodal projection onto the unit sphere, whereas one step of a weighted gradient flow \eqref{eq:gradientflow1} has been advovated in \cite{BNW2020} for the $Q$-tensor model. On the other hand, since $\tau_s$ penalizes changes of $s_h^i$, smaller values of $\tau_s$ relative to $\tau_{\nn}$ delay changes of $s_h^i$ in favor of changes of $\nn_h^i$. This does not fix the stiff character of \eqref{eq:gradientflow1}, studied in \cite{carter2020domain}, but does remove defect pinning. Several numerical experiments in Section \ref{sec:numerics} document this finding.

\section{Numerical experiments} \label{sec:numerics}

In this section,
we present a series of numerical experiments that explore the accuracy
of Algorithm~\ref{alg:gradient_flow} and its ability to approximate
rather complex defects of nematic LCs in 2D and 3D. In both cases, these
results complement
the theory of Sections \ref{sec:discretization} and \ref{sec:algorithm}
and extend it.

We have implemented Algorithm~\ref{alg:gradient_flow}
within the high performance multiphysics
finite element software Netgen/NGSolve \cite{schoberl2017netgen}.
To solve the constrained variational problem~\eqref{eq:gradientflow1},
we adopt a saddle point approach.
The ensuing linear systems are solved using the built-in conjugate gradient solver of Netgen/NGSolve, while the visualization relies on ParaView~\cite{Ahrens2005}.

All pictures below obey the following rules. The vector field depicts the director $\nn$,
whereas the color scale refers to the degree of orientation $s$.
Blue regions indicate areas with values of $s$ close to zero, which signify the
occurrence of defects, while the red ones indicate regions with largest values
of $s$ ($s \approx 0.75$ in our simulations), where the director encodes the local
orientation of the LC molecules. We generate unstructured, generally
non-weakly acute, meshes within Netgen with desirable mesh size $h_0$ but the 
effective maximum size $h$ of tetrahedra in 3D may only satisfy $h\approx h_0$.
For the sake of reproducibility, we will specify $h_0$ when dealing with unstructured 3D meshes.

We stress that, unlike FEMs proposed in previous works~\cite{nochetto2018ericksen,nwz2017}, the energy-decreasing property of Algorithm~\ref{alg:gradient_flow}
does rely on meshes being weakly acute (cf.\ Proposition~\ref{prop:scheme}). Except for simple 3D geometries, such meshes are hard, to impossible, to construct. This is the case of the cylinder domain in Section~\ref{sec:cylinder_ex} and the Saturn ring configurations in Section~\ref{sec:saturn}, for which mesh flexibility is of fundamental importance to capture topologically complicated defects.

Throughout this section,
we consider the double well potential $\psi(s)=\cdw(\psi_c(s)-\psi_e(s))$ with
\begin{equation} \label{dw-func}
  \psi_c(s) := 63s^2,
  \quad
  \psi_e(s) := -16s^4+\frac{64}{3}s^3+57s^2-0.5625,
\end{equation}
where $\cdw \ge 0$.
Note that, for $\cdw > 0$,
$\psi$ has a local minimum at $s=0$ and a global minimum at $s=\hat{s}:=0.750025$ such that $\psi(\hat{s})=0$.
Moreover, in view of Proposition~\ref{prop:properties_n} 
we measure the violation of the unit-length constraint in terms of the quantity
\begin{equation} \label{eq:unit_length_error}
\mathrm{err}_{\nn} := \norm[L^1(\Omega)]{I_h\big[\abs{\nn_h^N}^2-1\big]},
\end{equation}
where $\nn_h^N$ denotes the final approximation of the director generated by Algorithm~\ref{alg:gradient_flow}.
Furthermore, unless otherwise specified,
we choose the $L^2$-metric~\eqref{eq:metricL2}
in~\eqref{eq:gradientflow1}, and
we set the tolerance $\tol=10^{-6}$ in both~\eqref{eq:gradientflow2_stopping}
and~\eqref{eq:gradientflow_stopping}.

\subsection{Point defect in 2D} \label{sec:point2D}

In striking contrast with the Oseen--Frank model, the Ericksen model allows
point defects to have finite energy in 2D:
The blow-up of $\abs{\Grad\nn}$ near a defect is compensated by infinitesimal
values of $s$ for the energy $E[s,\nn]$ in~\eqref{eq:Ericksen-energy} to stay bounded.
We examine this basic mechanism with simulations of a point defect in 2D
and study the influence of the discretization parameters
on the performance of Algorithm~\ref{alg:gradient_flow}.

We consider the unit square $\Omega = (0,1)^2$,
and set $\kappa=2$ in~\eqref{eq:Ericksen-energy}
as well as $\cdw = 0.1(0.3)^{-2}$ in~\eqref{dw-func}.
We impose Dirichlet boundary conditions for $s$ and $\nn$
on $\partial\Omega$, namely
\begin{equation}\label{BC-2D}
g=\hat{s} \quad \text{and} \quad \qq=\rr/g=\frac{(x-0.5,y-0.5)}{|(x-0.5,y-0.5)|}
\quad \text{on } \partial\Omega.
\end{equation}
To initialize Algorithm~\ref{alg:gradient_flow},
we consider a constant degree of orientation
$s_h^0 = \hat{s}$ in $\Omega$
and a director $\nn_h^0$ exhibiting an off-center point defect located at $(0.24,0.24)$.
Due to the imposed boundary conditions and for symmetry reasons,
we expect that an energy-decreasing dynamics moves the defect to the center of the square;
see Figure~\ref{fig:2D-point-defect}.

\begin{figure}[htbp]
	\begin{center}
		\includegraphics[width=4.9cm]{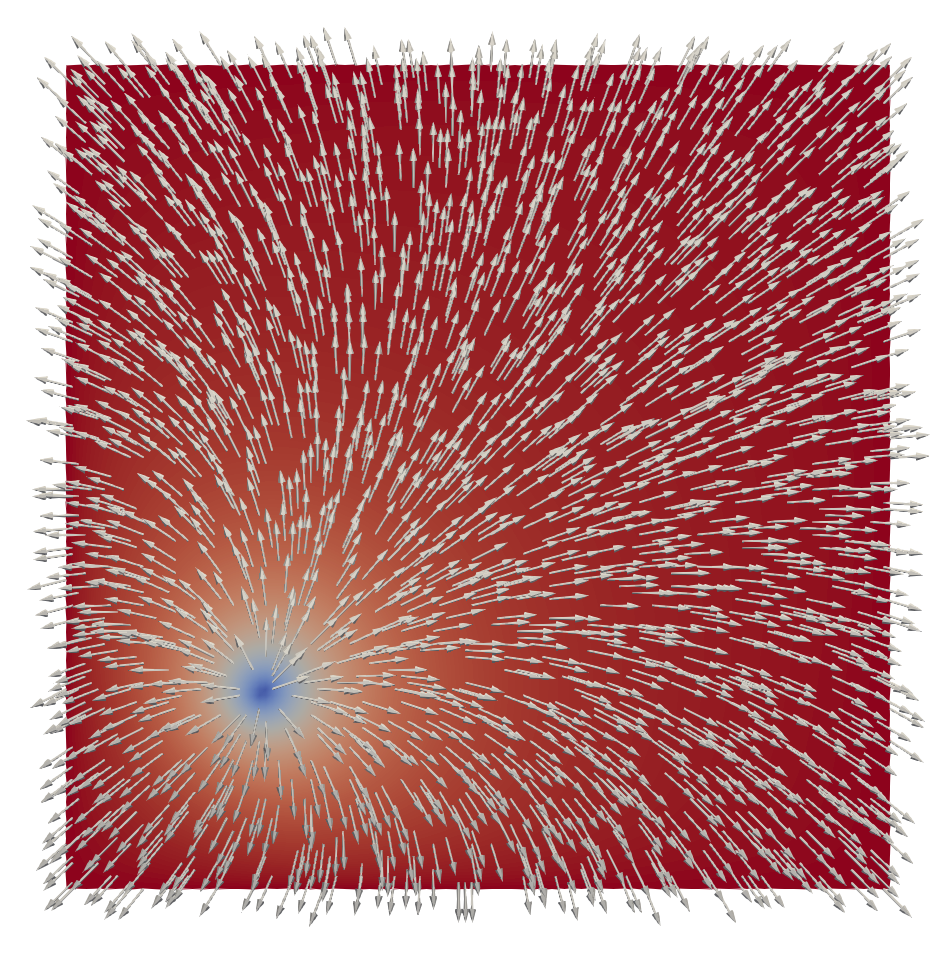}
		\includegraphics[width=5.0cm]{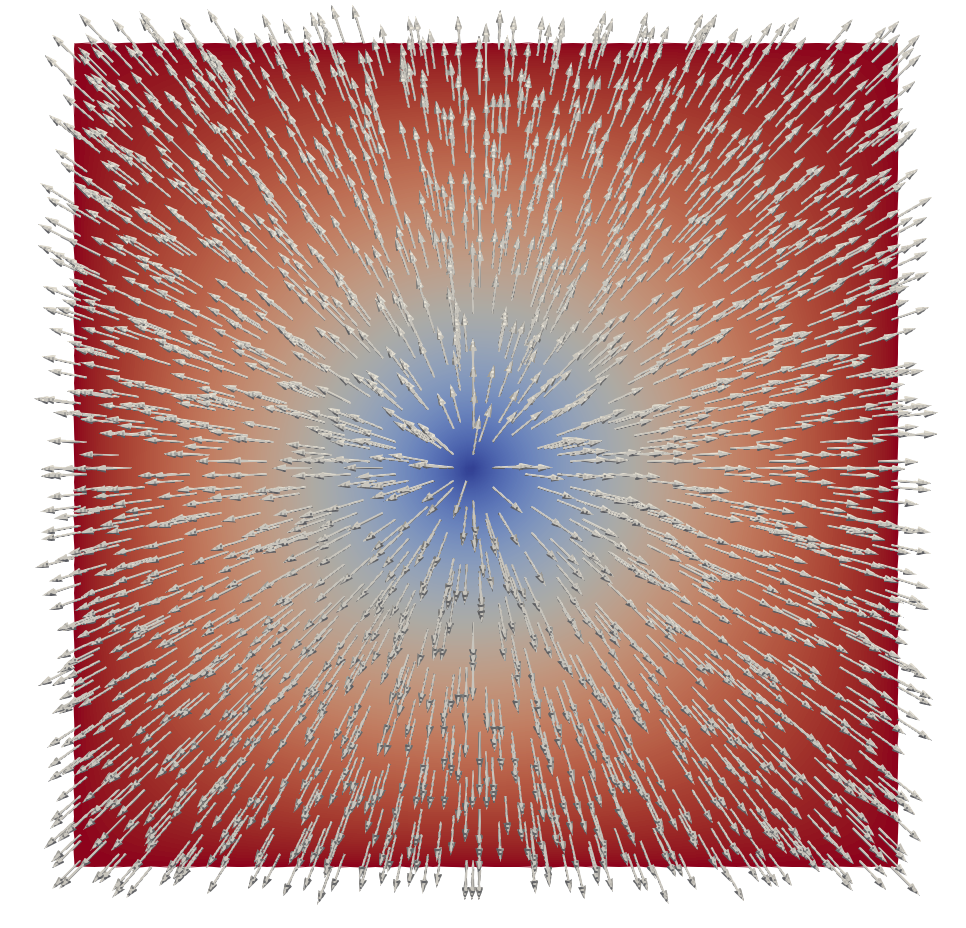}
		\caption{Point defect experiment of Section~\ref{sec:point2D}:
		Plot of the approximation $(s_h^1,\nn_h^1)$ after the first iteration (left)
		and of the final approximation $(s_h^N,\nn_h^N)$ (right). The gradient flow algorithm moves the defect to the center of the domain.}
		\label{fig:2D-point-defect}
	\end{center}
\end{figure}

In our first experiment, we consider a uniform mesh $\mesh$ of the unit square consisting of
2048 right triangles. The resulting mesh size is $h = \sqrt{2}\,2^{-5}$.
Moreover, we set $\tau_{\nn}=\tau_s=0.1$
and compare the results obtained for different choices of the metric $\inner[*]{\cdot}{\cdot}$ in~\eqref{eq:gradientflow1};
cf.\ \eqref{eq:metricL2}--\eqref{eq:metricH1_weighted}.
Table~\ref{table:2D-point-defect} displays the outputs for each run.
On the one hand, we observe that using the $L^2$-metric leads to the fastest dynamics
in terms of both number of iterations and CPU time.
On the other hand, the violation of the unit-length constraint is smaller for the weighted $H^1$-metrics.
For smaller values of $\alpha$ in the weighted $H^1$-metric,
Algorithm~\ref{alg:gradient_flow} terminates with a configuration exhibiting defect pinning at an off-center location.
The expected equilibrium state, depicted in Figure~\ref{fig:2D-point-defect} (right), can be restored when
reducing the time-step size $\tau_s$.
\begin{table}[htbp]
\begin{center}
\begin{tabular}{|c|c|c|c|c|c|}
\hline
metric					& $N$	& $E^h[s_h^N,\nn_h^N]$ & $\min(s_h^N)$ & $\mathrm{err}_{\nn}$ & CPU time (in s) \\ \hline
$L^2$					& 60		& 2.984	& 0.0757	& 0.0404	& 64.83 \\ \hline
weighted $H^1$, $\alpha=2.0$ 	& 67		& 2.944	& 0.0750	& 0.0370	& 98.65 \\ \hline
weighted $H^1$, $\alpha=1.9$	& 65 		& 2.938	& 0.0754	& 0.0362	& 111.69 \\ \hline
weighted $H^1$, $\alpha=1.8$	& 67 		& 2.932	& 0.0755	& 0.0353	& 130.17 \\ \hline
weighted $H^1$, $\alpha=1.7$ 	& 80		& 2.926	& 0.0760	& 0.0342	& 154.92 \\
\hline
\end{tabular}
\medskip
\caption{
Point defect experiment of Section~\ref{sec:point2D}:
Final outputs of Algorithm~\ref{alg:gradient_flow}
for different choices of metric $\inner[*]{\cdot}{\cdot}$, namely
total number of iterations $N$,
value of the energy $E^h[s_h^N,\nn_h^N]$ for the equilibrium state,
smallest value of the final $s_h^N$,
error in the unit-length constraint in~\eqref{eq:unit_length_error},
and the CPU time.}
\label{table:2D-point-defect} 
\end{center}
\end{table}

In our second set of experiments,
we investigate the effect of mesh refinement and changes of the time-step size on the results.
To this end, we first repeat the simulation using three uniform meshes with $h = \sqrt{2} \, 2^{-5-\ell}$ ($\ell = 0,1,2$); we set $\tau_{\nn} = 0.1 \,  2^{-2\ell}$,
in agreement with the CFL condition in~\eqref{eq:cfl} for the $L^2$-metric and $d=2$.
We collect the results of computations in Table \ref{table:2D-point-defect-2} (left),
and observe that both $\min(s_h^N)$ and $\mathrm{err}_{\nn}$ decrease about linearly
with $h$, whereas the energy $E_1^h[s_h^N,\nn_h^N]$ also decreases.
We next consider a fixed mesh with $h = \sqrt{2}\,2^{-5}$
and study the decay of $\mathrm{err}_{\nn}$ in~\eqref{eq:unit_length_error} as
the time-step size $\tau_{\nn}$ decreases;
see Table \ref{table:2D-point-defect-2} (right).
In this third set of experiments, we let
$\tau_{\nn} = (0.1)2^{-5-\ell}$ ($\ell = 0,1,2$), and $\tol=10^{-5}\tau_{\nn}$ in both~\eqref{eq:gradientflow2_stopping} and~\eqref{eq:gradientflow_stopping}.
The computational results in Table~\ref{table:2D-point-defect-2} (right) confirm
the first-order convergence with respect to $\tau_{\nn}$ established in Proposition~\ref{prop:properties_n}; see \eqref{eq:L1error} that bounds $\mathrm{err}_{\nn}$ in terms of
$\tau_{\nn} E^h[s_h^0,\nn_h^0]$. This explains the behavior of $\mathrm{err}_{\nn}$
in Table~\ref{table:2D-point-defect-2} (left) upon refinement, which increases
$E^h[s_h^0,\nn_h^0]$ because $\nn_h^0$ has a point defect while $s_h^0$ is constant and does not compensate the blow of $\Grad\nn_0$.

\begin{table}[htbp]
\begin{center}
\begin{tabular}{|c|c|c|c|c|c|}
\hline
$h$				& $N$	& $E^h[s_h^N,\nn_h^N]$ & $\min(s_h^N)$ & $\mathrm{err}_{\nn}$ & CPU time (in s)\\ \hline
$\sqrt{2} \, 2^{-5}$	& 60		& 2.984	& 0.0757	& 0.0404	& 64.83 \\ \hline
$\sqrt{2} \, 2^{-6}$	& 61		& 2.940	& 0.0422	& 0.0232	& 592.23 \\ \hline
$\sqrt{2} \, 2^{-7}$	& 133	& 2.939	& 0.0289	& 0.0100	& 7919.25\\
\hline
\end{tabular}
\hfil
\begin{tabular}{|c|c|}
\hline
$\tau_{\nn}$ & $\mathrm{err}_{\nn}$ \\ \hline
$(0.1)2^{-5}$	& 0.00610 \\ \hline
$(0.1)2^{-6}$	& 0.00346	 \\ \hline
$(0.1)2^{-7}$	& 0.001927 \\
\hline
\end{tabular}
\medskip
\caption{
Point defect experiment of Section~\ref{sec:point2D}:
Final outputs of Algorithm~\ref{alg:gradient_flow}
for different uniform meshes with mesh size $h$ and time steps $\tau_{\nn} = Ch^2$
(left) and different time step sizes $\tau_{\nn}$ with fixed mesh size
$h=\sqrt{2}\,2^{-5}$ (right).}
\label{table:2D-point-defect-2} 
\end{center}
\end{table}

\subsection{Plane defect in 3D} \label{sec:plane3D}

We simulate a plane defect in the unit cube $\Omega = (0,1)^3$ located at $\{z=0.5\}$,
according to~\cite[Section~6.4]{virga1994}.
We set $\kappa=0.2$ in~\eqref{eq:Ericksen-energy}
and $\cdw = 0$ in~\eqref{dw-func}.
We impose Dirichlet boundary conditions on the top and bottom faces $\Gamma_D$
of the cube
\begin{equation*}
g=\hat{s}, \
\qq=\rr/g=(1,0,0)
\text{ on } \partial\Omega \cap \{ z=0\},
\quad
g=\hat{s}, \
\qq=\rr/g=(0,1,0)
\text{ on } \partial\Omega \cap \{ z=1\}.
\end{equation*}
The exact solution is $\nn(z)=(1,0,0)$ for $z<0.5$ and $\nn(z)=(0,1,0)$ for $z>0.5$, while $s(z)=0$ on $z=0.5$ and linear on $(0,0.5)\cup(0.5,1)$~\cite[Section~6.4]{virga1994}.
Our numerical results are consistent with those in~\cite[Section~5.3]{nwz2017}.
To initialize Algorithm~\ref{alg:gradient_flow},
we set $s_h^0 =\hat{s}$ and $\nn_h^0$ to be a regularized point defect away from the center of the cube.
Figure \ref{fig:3D-plane-defect} displays the three components of $\nn_h^k$ and $s_h^k$ evaluated along the vertical line
$(0.5,0.5,z)$ for iterations $k=1, 31, 79$ computed on a uniform mesh with $h=\sqrt{3} \, 0.05$ and $\tau_{\nn}=\tau_s= 0.01$.

\begin{figure}[htbp]
\begin{tikzpicture}
\pgfplotstableread{data/iter1.dat}{\one}
\begin{axis}
[
width = 0.33\textwidth,
height=4.5cm,
title={$k=1$},
xlabel={$z$},
ylabel={director components},
xmin = 0,
xmax = 1,
ymin = -0.05,
ymax = 1.05,
legend style={at={(0.5,0.45)},anchor=north}
]
\addplot[blue,thick,densely dashed]	table[x=z, y=n1]{\one};
\addplot[red,thick,densely dotted]	table[x=z, y=n2]{\one};
\addplot[teal,thick,solid]			table[x=z, y=n3]{\one};
\legend{
{$n_1$},
{$n_2$},
{$n_3$},
}
\end{axis}
\end{tikzpicture}
\hfill
\begin{tikzpicture}
\pgfplotstableread{data/iter31.dat}{\one}
\begin{axis}
[
width = 0.33\textwidth,
height=4.5cm,
title={$k=31$},
xlabel={$z$},
xmin = 0,
xmax = 1,
ymin = -0.05,
ymax = 1.05,
legend style={at={(0.97,0.67)},anchor=east}
]
\addplot[blue,thick,densely dashed]	table[x=z, y=n1]{\one};
\addplot[red,thick,densely dotted]	table[x=z, y=n2]{\one};
\addplot[teal,thick,solid]			table[x=z, y=n3]{\one};
\legend{
{$n_1$},
{$n_2$},
{$n_3$},
}
\end{axis}
\end{tikzpicture}
\hfill
\begin{tikzpicture}
\pgfplotstableread{data/iter79.dat}{\one}
\begin{axis}
[
width = 0.33\textwidth,
height=4.5cm,
title={$k=79$},
xlabel={$z$},
xmin = 0,
xmax = 1,
ymin = -0.05,
ymax = 1.05,
legend style={at={(0.97,0.67)},anchor=east}
]
\addplot[blue,thick,densely dashed]	table[x=z, y=n1]{\one};
\addplot[red,thick,densely dotted]	table[x=z, y=n2]{\one};
\addplot[teal,thick,solid]			table[x=z, y=n3]{\one};
\legend{
{$n_1$},
{$n_2$},
{$n_3$},
}
\end{axis}
\end{tikzpicture}\\
\begin{tikzpicture}
\pgfplotstableread{data/iter1.dat}{\one}
\begin{axis}
[
width = 0.33\textwidth,
height=4.5cm,
xlabel={$z$},
ylabel={degree of orientation},
xmin = 0,
xmax = 1,
ymin =  0,
ymax = 0.76,
legend style={legend pos=south east}
]
\addplot[red,thick,densely dashed]	table[x=z, y=s]{\one};
\legend{
{$s$},
}
\end{axis}
\end{tikzpicture}
\hfill
\begin{tikzpicture}
\pgfplotstableread{data/iter31.dat}{\one}
\begin{axis}
[
width = 0.33\textwidth,
height=4.5cm,
xlabel={$z$},
xmin = 0,
xmax = 1,
ymin =  0,
ymax = 0.76,
legend style={legend pos=south east}
]
\addplot[red,thick,densely dashed]	table[x=z, y=s]{\one};
\legend{
{$s$},
}
\end{axis}
\end{tikzpicture}
\hfill
\begin{tikzpicture}
\pgfplotstableread{data/iter79.dat}{\one}
\begin{axis}
[
width = 0.33\textwidth,
height=4.5cm,
xlabel={$z$},
xmin = 0,
xmax = 1,
ymin =  0,
ymax = 0.76,
legend style={legend pos=south east}
]
\addplot[red,thick,densely dashed]	table[x=z, y=s]{\one};
\legend{
{$s$},
}
\end{axis}
\end{tikzpicture}
\caption{
Plane defect of Section~\ref{sec:plane3D}:
Plots of the three components of $\nn_h^k$ (first row)
and plots of $s_h^k$ (second row) for iterations $k=1, 31, 79$.
In the final configuration ($k=N=79$),
the energy is $E^h[s_h^N,\nn_h^N] =0.247$, $\min(s_h^N)=0.0101$,
and $\mathrm{err}_{\nn}=0.0556$. Moreover, there is
a transition layer between about $z=0.4$ and $z=0.6$, and $s_h$
is almost linear in $(0,0.4)$ and $(0.6,1)$.}
\label{fig:3D-plane-defect}
\end{figure}
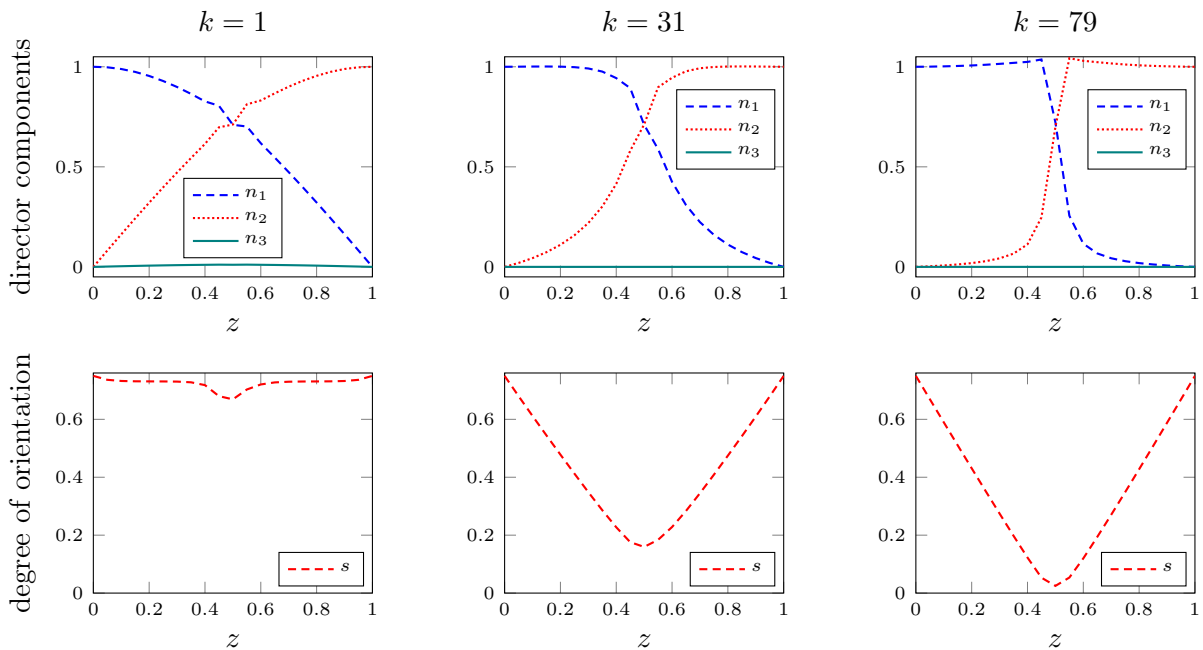

\subsection{Effect of $\kappa$ on equilibria} \label{sec:cylinder_ex}

The value of the constant $\kappa>0$ in~\eqref{eq:Ericksen-energy}
plays a crucial role in the formation of defects.
For large values of $\kappa$,
the dominant term in $E_1[s,\nn]$ is $\int_{\Omega} \kappa \abs{\grad s}^2$
that prevents variations of $s$. Typically $s$ tends to be close to a
(usually positive) constant
and the model behaves much like the simpler Oseen--Frank model,
where defects are less likely to occur
(and no defects with finite energy beyond point defects are allowed in 3D).
On the other hand,
for small values of $\kappa$,
the energy is dominated by $\int_{\Omega} s^2 \abs{\Grad \nn}^2$,
which allows $s$ to become zero to compensate large gradients of $\nn$,
and defects are then more likely to occur.
In this section, we investigate this dichotomy numerically.

We consider a cylindrical domain $\Omega$ in 3D with lateral
boundary $\Gamma_D$
\begin{gather*}
  \Omega = \{ (x,y,z) \in \R^3 : (x-0.5)^2+(y-0.5)^2 < 0.5^2, \, 0 < z < 1 \},
  \\
  \Gamma_D = \{ (x,y,z) \in \R^3 : (x-0.5)^2+(y-0.5)^2 = 0.5^2, \, 0 < z < 1 \},
\end{gather*}
and impose the Dirichlet conditions on $\Gamma_D$
\begin{equation}\label{BC-3D}
g=\hat{s}
\quad
\text{and}
\quad
\qq = \rr / g = \frac{(x-0.5,y-0.5,0)}{\abs{(x-0.5,y-0.5,0)}},
\end{equation}
The top and bottom faces of $\Omega$ are treated as free boundaries and the
double well potential $\psi$
is neglected, i.e., $\cdw=0$ in~\eqref{dw-func}.
The analysis in~\cite[Section~6.5]{virga1994} predicts
that minimizers of the energy exhibit
a line defect along the central axis of the cylinder
if $\kappa$ is sufficiently small,
whereas they are smooth (no defects) if $\kappa$ is sufficiently large.

Figure \ref{fig:cyl_2} displays the final configurations obtained for $\kappa=0.2$ and $\kappa=2$.
To discretize $\Omega$,
we consider an unstructured mesh generated by Netgen with $h_0=0.05$.
For both values of $\kappa$, we set $\hat{s}$ as initial condition for the degree of orientation.
For $\kappa=0.2$,
we set $\tau_{\nn}=0.1$ and $\tau_s=10^{-3}$ and take as initial condition for the director field
an off-center point defect located at the slice $z=0.5$. 
For $\kappa=2$, we set $\tau_{\nn}=\tau_s=0.01$
and initialize $\nn_h^0$ as an off-center point defect located at the slice $z=0.25$.
These computational results are consistent with those in~\cite{nwz2017}
and confirm the predicted effect of $\kappa$~\cite[Section~6.5]{virga1994}.

\begin{figure}[htbp]
\begin{center}
\includegraphics[width=6cm]{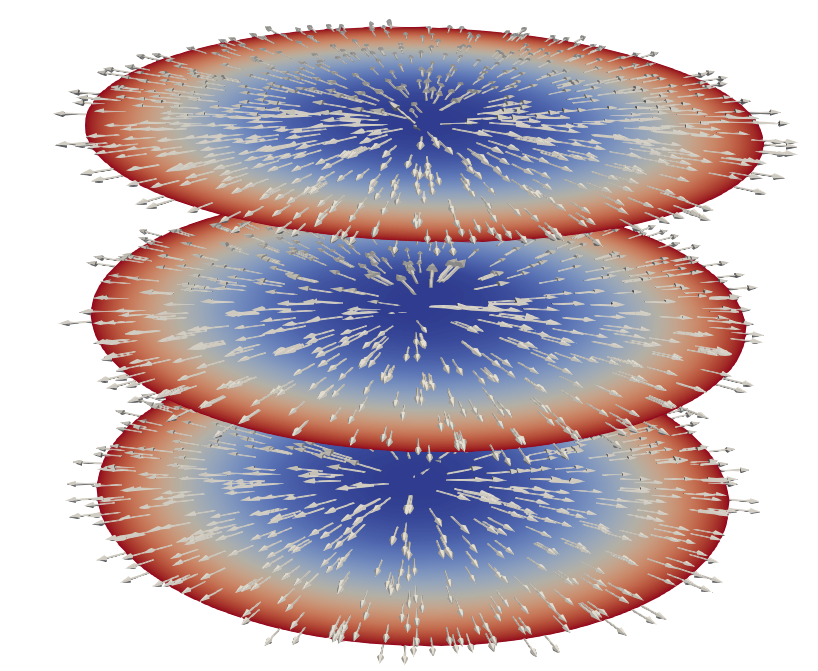}
\quad
\includegraphics[width=5.6cm]{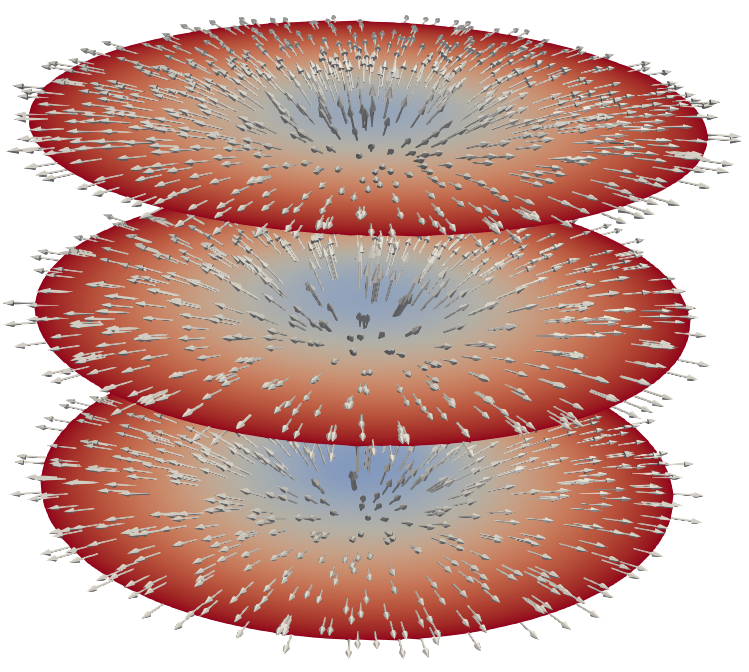}
\caption{  
Effect of $\kappa$ in Section~\ref{sec:cylinder_ex}:
Equilibria for $\kappa=0.2$ (left) and $\kappa=2$ (right).
Both pictures show $s_h^N$ and $\nn_h^N$ on the slices $z = 0.2, 0.5, 0.8$.
If $\kappa=0.2$, the final configuration exhibits a line defect along the central axis of the cylinder;
the final energy is $E^h[s_h^N,\nn_h^N] =0.806$,
$\min(s_h^N)=-7.33\times10^{-4}$,
$\mathrm{err}_{\nn}=0.0778$,
and $N = 226$.
If $\kappa=2$,
the $z$-component of the director is not zero.
This behavior is usually referred to as \emph{fluting effect}
or \emph{escape to the third dimension}~\cite[Section~6.5.1]{virga1994}.
Moreover, the degree of orientation is bounded well away from zero; the final energy is 
$E^h[s_h^N,\nn_h^N] =2.635$,
$\min(s_h^N)=0.224$,
$\mathrm{err}_{\nn}=0.044$,
and $N=17$.}
\label{fig:cyl_2}
\end{center}
\end{figure}

\subsection{Propeller defect}  \label{sec:propeller}

In this section, we investigate a new defect discovered
in~\cite[Section~5.4]{nwz2017}.
We consider a setup similar to the one discussed in Section~\ref{sec:cylinder_ex},
except that the domain is the unit cube $\Omega = (0,1)^3$, and we again
set $\cdw=0$ in~\eqref{dw-func}.
The top and bottom faces of the cube are treated as free boundary,
while the same strong anchoring conditions as in~\eqref{BC-3D}
are imposed on the vertical faces $\Gamma_D$ of the cube (lateral boundary).
The initial conditions
are $s_h^0 = \hat{s}$ for the degree of orientation
and an off-center point defect located on the slice $z=0.5$
for the director.
The domain is discretized using an unstructured mesh generated by Netgen with $h_0=0.025$, and we set $\tau_{\nn}=0.02$.
We consider the values $\kappa=2$ and $\kappa=0.1$.
For $\kappa=2$ and $\tau_s=0.2$, the computational results agree with those
of Section~\ref{sec:cylinder_ex}:
The equilibrium state is smooth and is characterized by a nonzero $z$-component
(fluting effect).

For $\kappa=0.1$, the final configuration reported in~\cite[Section~5.4, Figure~5]{nwz2017} consists of two plane defects intersecting at the vertical symmetry axis of the cube,
the so-called propeller defect. 
Whether this was a numerical artifact due to the inherent symmetries of the structured
uniform weakly acute meshes used in~\cite{nwz2017} for simulation was an intriguing open
question that we now answer.
Owing to the flexibility of our approach regarding meshes,
we repeated the experiment using an unstructured nonsymmetric mesh
with $\tau_s=10^{-4}$. Our computational results confirm the emergence of
the propeller defect in Figure~\ref{fig:k0p1}, which in turn
displays the director field $\nn_h^k$ at iterations $k=0, 1, 2766$
with colors indicating the size of $s_h^k$.
 
\begin{figure}[htbp]
\begin{center}
\includegraphics[width=4.8cm]{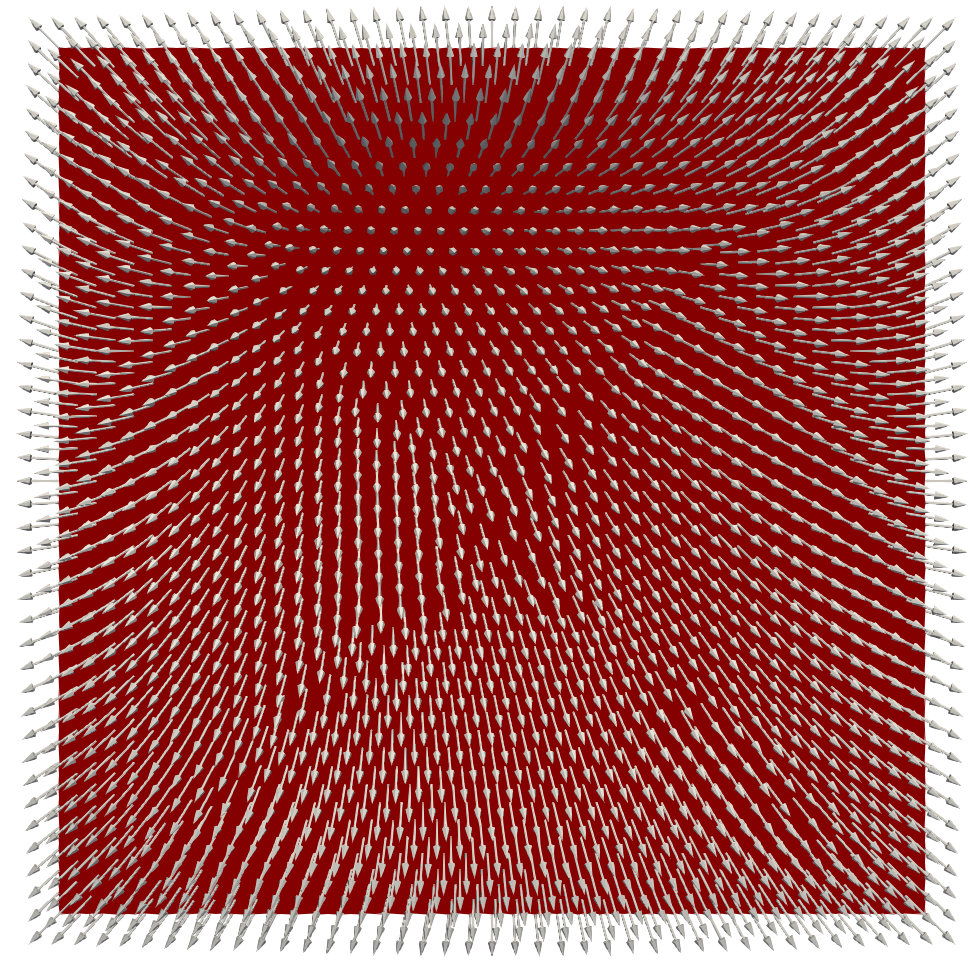}
\includegraphics[width=5cm]{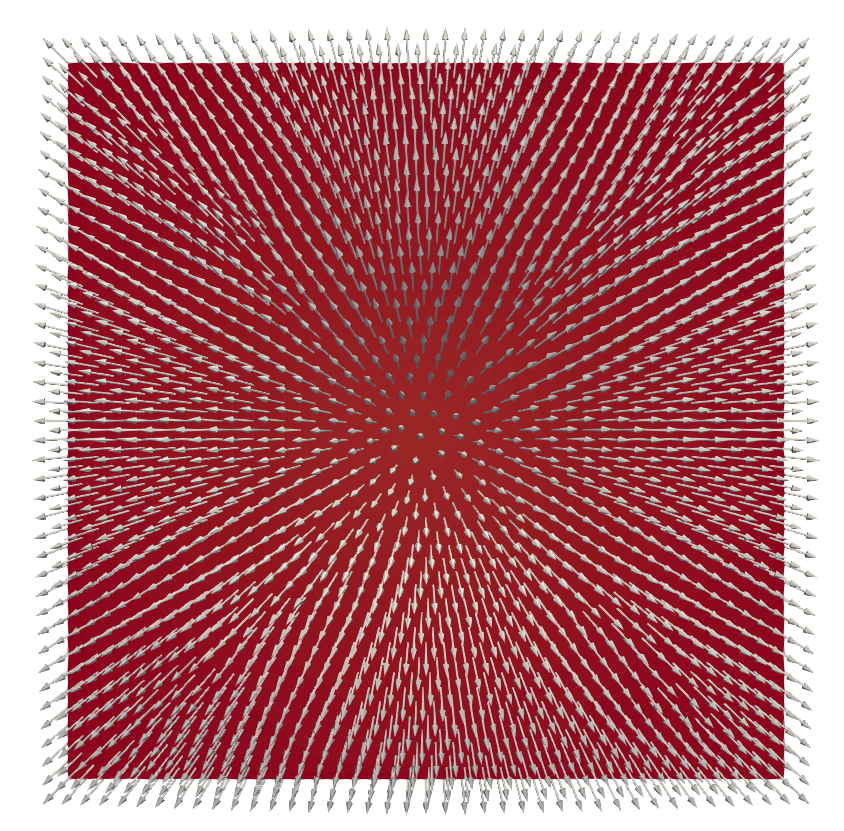}
\includegraphics[width=4.8cm]{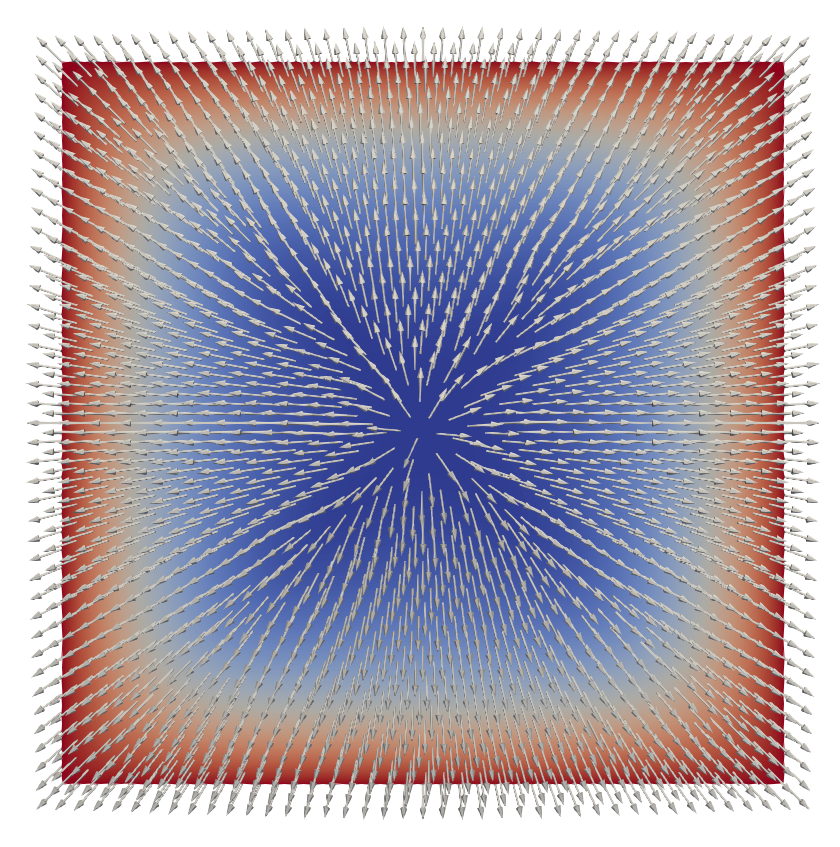}
\caption{
Propeller defect of Section~\ref{sec:propeller}:
Evolution of the order parameters on the top face of the cube ($z=1$).
Plots of the initial state $(s^0_h,\nn^0_h)$ (left),
of the intermediate approximation $(s^1_h,\nn^1_h)$ obtained after the first iteration (middle),
and of the equilibrium state $(s^N_h,\nn^N_h)$ after $2766$ iterations (right).
In the initial state,
due to the off-center point defect at $z=0.5$,
there is a corresponding region on the slice for $z=1$ where $\nn$ is aligned with $z$-direction.
After the first iteration,
in which $\nn$ is minimized for fixed $s = \hat{s}$,
by symmetry the defect has moved to the center on $z=0.5$.
Correspondingly,
on the top surface of the cube,
the region where $\nn$ is aligned with the $z$-axis
has moved to the center.
The final state is a propeller defect consisting of a planar X-like configuration extruded in the $z$-direction. The final energy is 
$E^h[s_h^N,\nn_h^N] =0.592$,
$\min(s_h^N)=-1.575\times10^{-4}$,
$\mathrm{err}_{\nn}=0.0265$,
and $N=2766$.}
\label{fig:k0p1}
\end{center}
\end{figure}

\subsection{Colloidal effects in nematic LCs} \label{sec:saturn}

Colloidal particles suspended in a nematic LC can induce
interesting topological defects and distortions~\cite{gu2000observation,stark1999director}.
One prominent example is the so-called \emph{Saturn ring defect},
a director configuration characterized by a circular ring singularity
surrounding a spherical particle and located around its equator. Such defects
are typically nonorientable and captured within the Landau--de Gennes $Q$-tensor
model~\cite{BNW2020,bw2021}, but the Ericksen model yields similar orientable defects
under suitable boundary conditions~\cite{nochetto2018ericksen}.
We confirm the ability of Algorithm~\ref{alg:gradient_flow} to produce
similar configurations.  

In this section, we exploit the flexibility of Algorithm~\ref{alg:gradient_flow}
regarding meshes, together with  the built-in Constructive Solid Geometry (CSG) approach of Netgen/NGSolve, to explore numerically the formation of Saturn-ring-like defects induced by 
nonspherical or multiple particles.

\subsubsection{One ellipsoidal particle}\label{sec:ellipsoid}

Let $\Omega_c = (0,1)^3$ be the unit cube
and let $\Omega_s \subset \Omega_c$ be an ellipsoid centered at $(0.5,0.5,0.5)$
with axes parallel to the coordinate axes and semiaxis lengths equal to $0.3$ ($x$-direction),
$0.075$ ($y$-direction), and $0.075$ ($z$-direction); $\Omega_s$ has an aspect
ratio $1:4$.
The computational domain is then $\Omega:=\Omega_c\setminus\overline{\Omega_s}$.
We set $\kappa=1$ in~\eqref{eq:Ericksen-energy}
as well as $\cdw = 0.2$ in~\eqref{dw-func}.
On $\partial\Omega = \partial\Omega_c \cup \partial\Omega_s$,
we impose strong anchoring conditions 
\begin{equation}\label{BC-saturn}
g=\hat{s}\text{ on }\partial\Omega,
\quad 
\qq=\rr/g=\nnu \text{ on } \partial\Omega_s,
\quad
\text{and}
\quad
\qq=\rr/g=\nn_{sr} \text{ on }\partial\Omega_c,
\end{equation} 
where $\nnu: \partial\Omega_s \to \mathbb{S}^2$ denotes the outward-pointing unit normal vector of $\Omega_s$
and $\nn_{sr} : \partial\Omega_s \to \mathbb{S}^2$ smoothly interpolates between
the constant values $(0, 0, -1)$ on the bottom face and $(0, 0, 1)$ on the top face of the cube
(see \cite[Figure~11]{nochetto2018ericksen}).
These boundary conditions are essential in order to induce the defect. 
The initial conditions for Algorithm~\ref{alg:gradient_flow} are given by
\begin{equation} \label{eq:saturnInitial}
s^0_h=\hat{s}
\text{ in }
\Omega
\quad
\text{and}
\quad
\nn^0_h(z) =
\begin{cases}
(0,0,1) & z\in\Omega \text{ and } z_3 \ge 0.5 , \\
(0,0,-1) & z\in\Omega \text{ and } z_3 < 0.5, \\
\qq(z) & z\in\partial\Omega,
\end{cases}
\end{equation}
for $z=(z_1,z_2,z_3)\in\nodes$.
Figure~\ref{fig:final_ellipsoid} displays cuts of the final configuration
obtained using Algorithm~\ref{alg:gradient_flow} with an unstructured mesh
with $h_0=0.05$ and time-step sizes $\tau_{\nn}=\tau_s=0.01$.

\begin{figure}[htbp]
\begin{center}
\includegraphics[width=4cm]{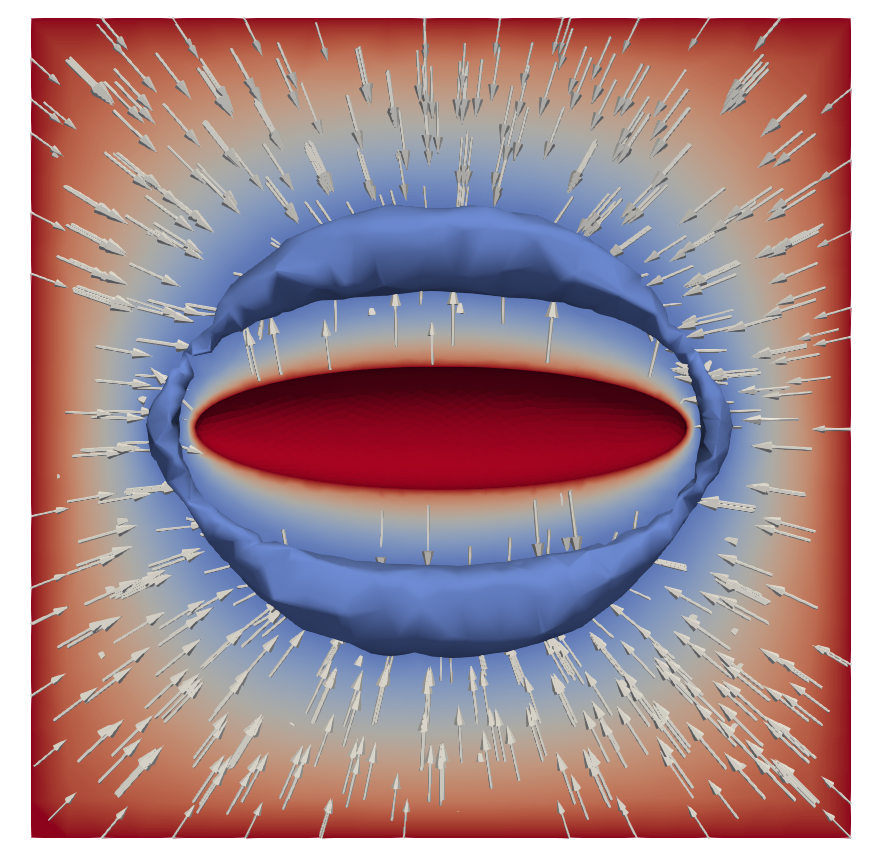}
\includegraphics[width=4cm]{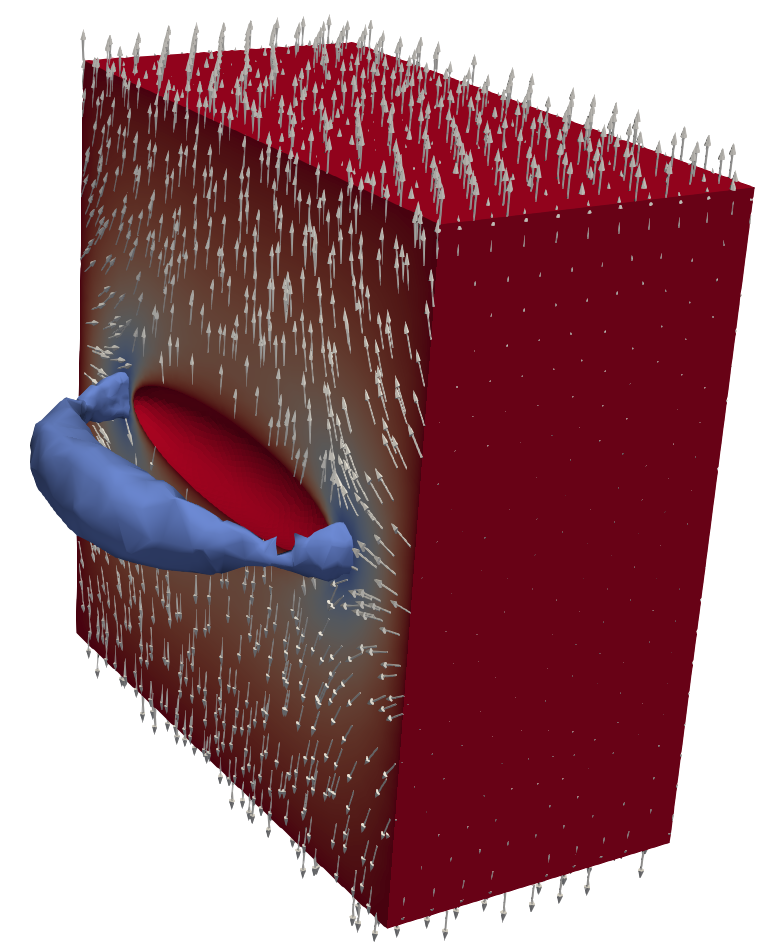}
\includegraphics[width=4cm]{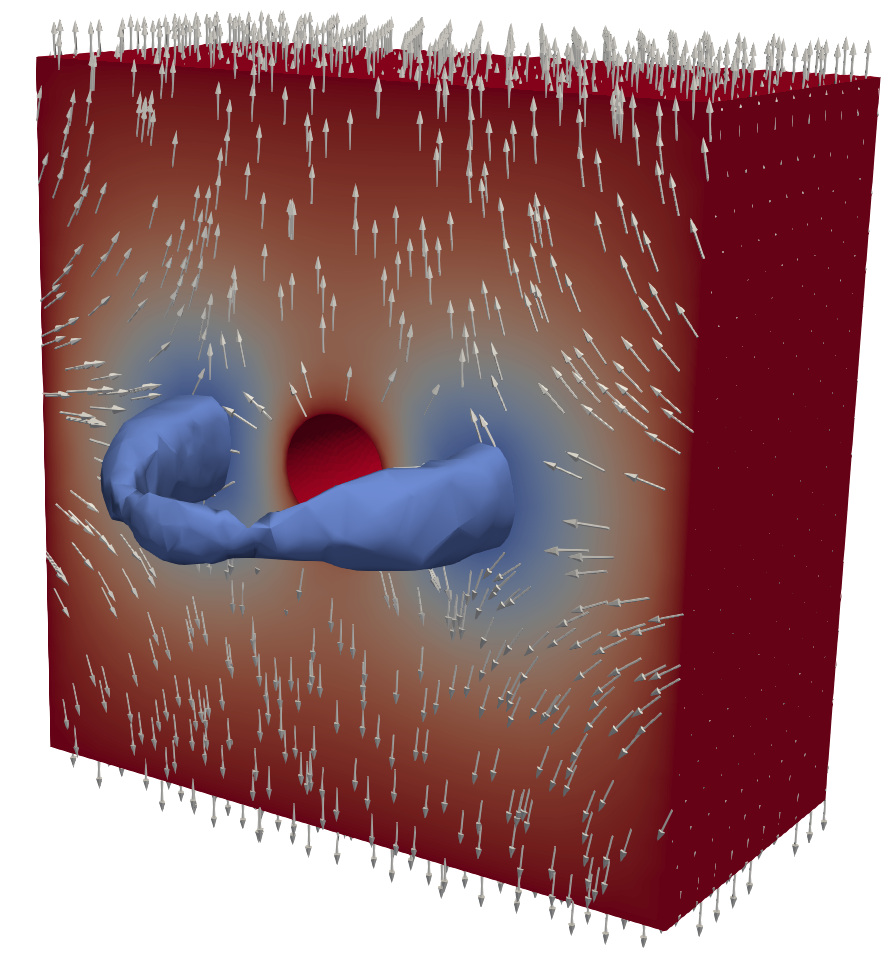}
\caption{Saturn ring experiment of Section~\ref{sec:ellipsoid}.
Three different perspectives of the Saturn ring defect around an ellipsoidal particle:
slice $z=0.5$ (left),
a 3D view clipped at $y=0.5$ (middle),
and a 3D view clipped at $x=0.5$ (right).
The blue ring surrounding the particle, the iso-surface for $s=0.15$, provides a good approximation of the defect.
We stress that neither the distance between the defect and the particle nor the defect diameter are constant, which is a consequence of the anisotropic shape of the particle.
The final energy is 
$E^h[s_h^N,\nn_h^N] =7.263$,
$\min(s_h^N)= 0.0128$,
$\mathrm{err}_{\nn}=0.145$,
and $N=33$.}
\label{fig:final_ellipsoid}
\end{center}
\end{figure}

\subsubsection{Multiple spherical particles} \label{sec:saturn_more}

We conclude this section with two novel and challenging
simulations involving multiple spherical colloidal particles.
In both cases, the domain has the form
$\Omega:=\Omega_c\setminus\overline{\Omega_s}$,
where
$\Omega_c \subset \R^3$
denotes a simply connected domain
(representing the LC container),
whereas
$\Omega_s \subset \Omega_c$ denotes the region occupied by
spherical colloidal particles.
We set $\kappa=1$ in~\eqref{eq:Ericksen-energy}
and $\cdw = 0.2$ in~\eqref{dw-func}.
Moreover, boundary and initial conditions are suitable extensions
to the multiple particle case of~\eqref{BC-saturn} and~\eqref{eq:saturnInitial}
considered in Section~\ref{sec:ellipsoid}.

Figure~\ref{fig:final_saturn_two} shows the equilibrium state corresponding
to $\Omega_c = (0,1)^3$ and a pair of disjoint spherical colloids $\Omega_s$
with radii $0.1$ and centered at $(0.3,0.5,0.5)$ and $(0.7,0.5,0.5)$.
Algorithm~\ref{alg:gradient_flow} employs
an unstructured mesh with $h_0=0.025$ and time-step sizes $\tau_{\nn}=\tau_s=0.0025$.
A novel \emph{fat figure ``8''} defect forms.

Figure~\ref{fig:final_saturn_six} depicts the equilibrium state corresponding
to $\Omega_c = (-0.1,1.1)^3$ and a colloidal region consisting of six spheres.
The latter have radii $0.1$ and centers located
at $(0.2,0.5,0.5)$, $(0.8,0.5,0.5)$, $(0.5,0.2,0.5)$, $(0.5,0.8,0.5)$, $(0.5,0.5,0.2)$, and $(0.5,0.5,0.8)$ distributed symmetrically with respect to the cube center.
Algorithm~\ref{alg:gradient_flow} utilizes an unstructured mesh with $h_0=0.05$
and time-step sizes $\tau_{\nn}=\tau_s=0.005$.

\begin{figure}[htbp]
\begin{center}
\includegraphics[width=3.7cm]{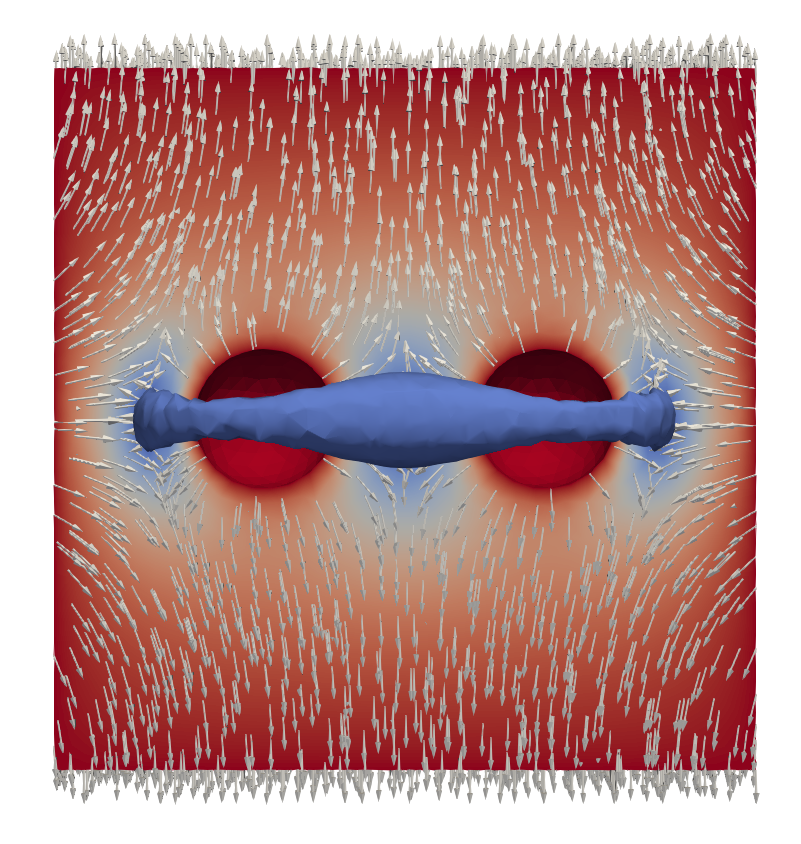}
\includegraphics[width=3.7cm]{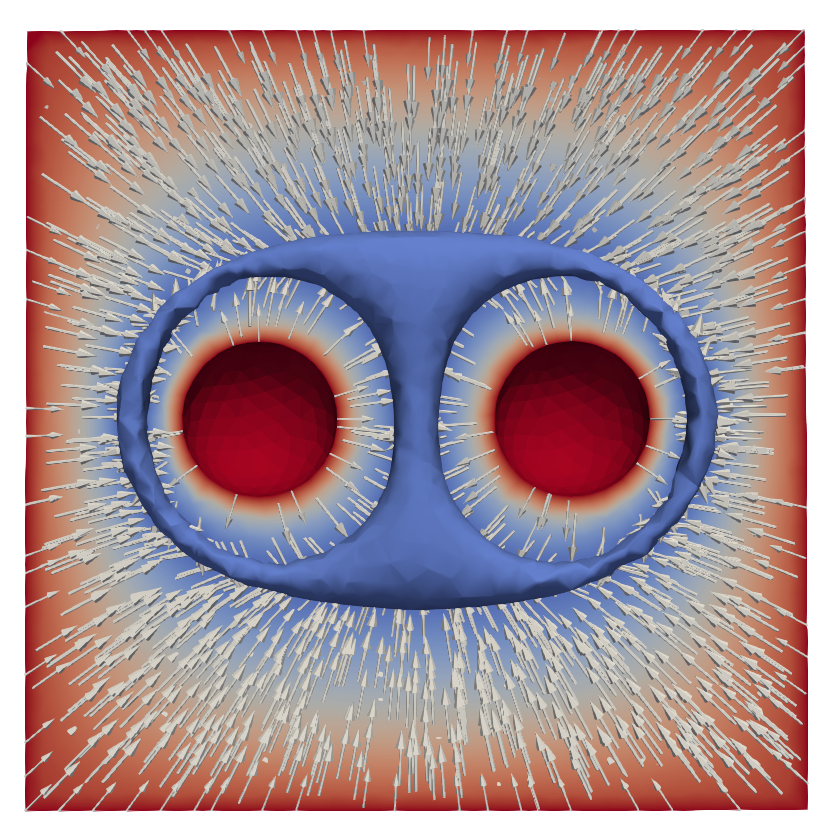}
\includegraphics[width=3.7cm]{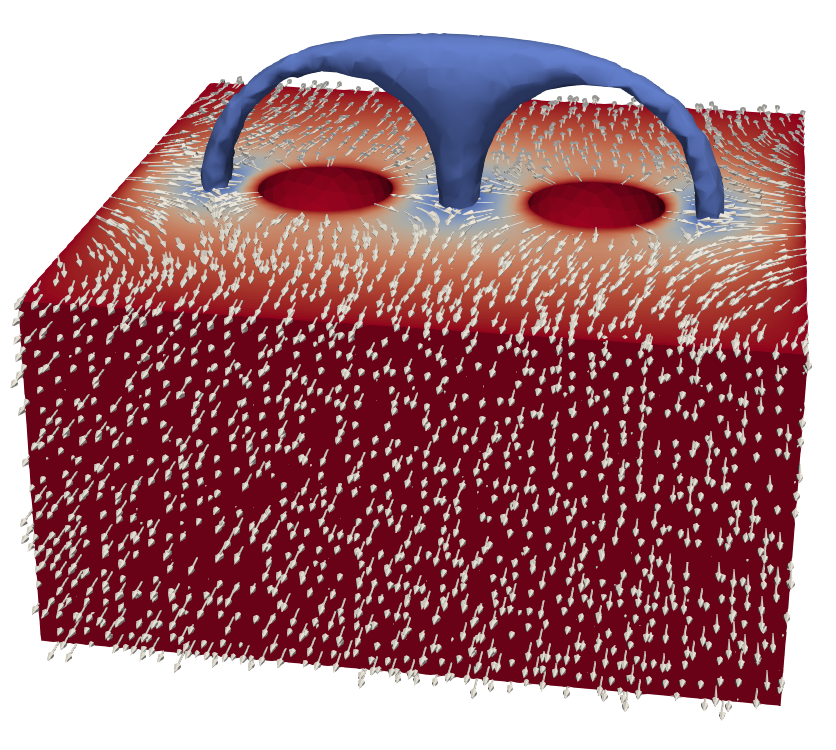}
\caption{Two-particle experiment of Section~\ref{sec:saturn_more}.
\emph{Fat figure ``8'' defect} around two spherical colloids viewed from different perspectives:
slice $y=0.5$ (left),
slice $z=0.5$ (middle),
and a 3D view clipped at $y=0.5$ (right).
The blue ring surrounding the particle is the iso-surface for $s=0.12$, which provides a good approximation of the defect. The final energy is 
$E^h[s_h^N,\nn_h^N] =7.656$,
$\min(s_h^N)= 0.0146$,
$\mathrm{err}_{\nn}=0.0972$,
and $N=57$.}
\label{fig:final_saturn_two}
\end{center}
\end{figure}

\begin{figure}[htbp]
\begin{center}
\includegraphics[width=4cm]{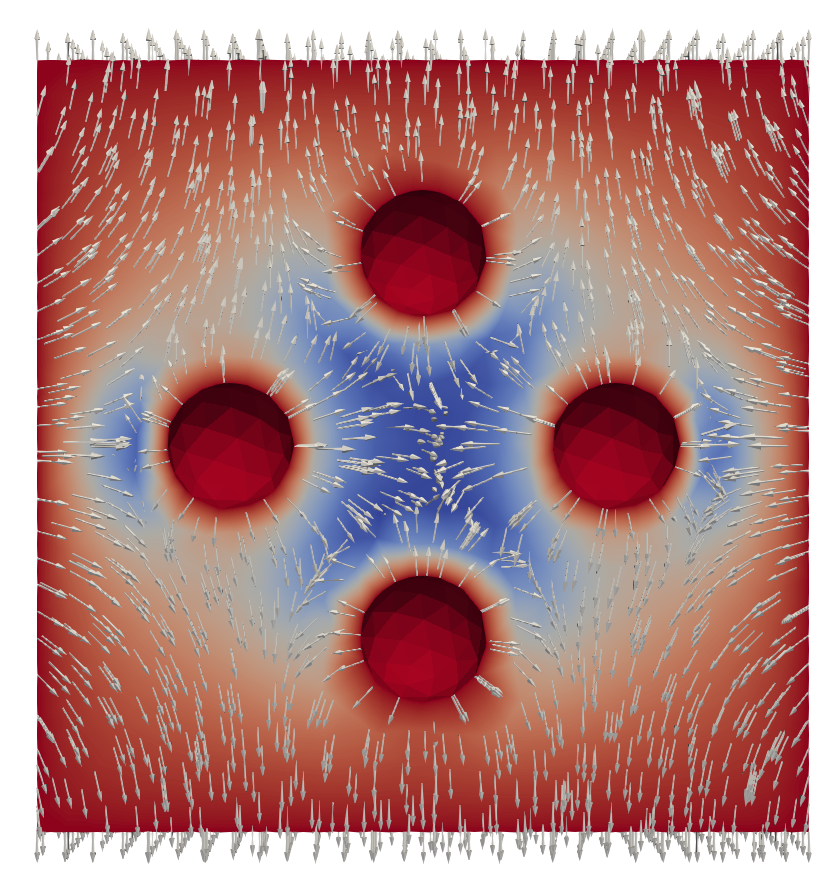} 
\includegraphics[width=4cm]{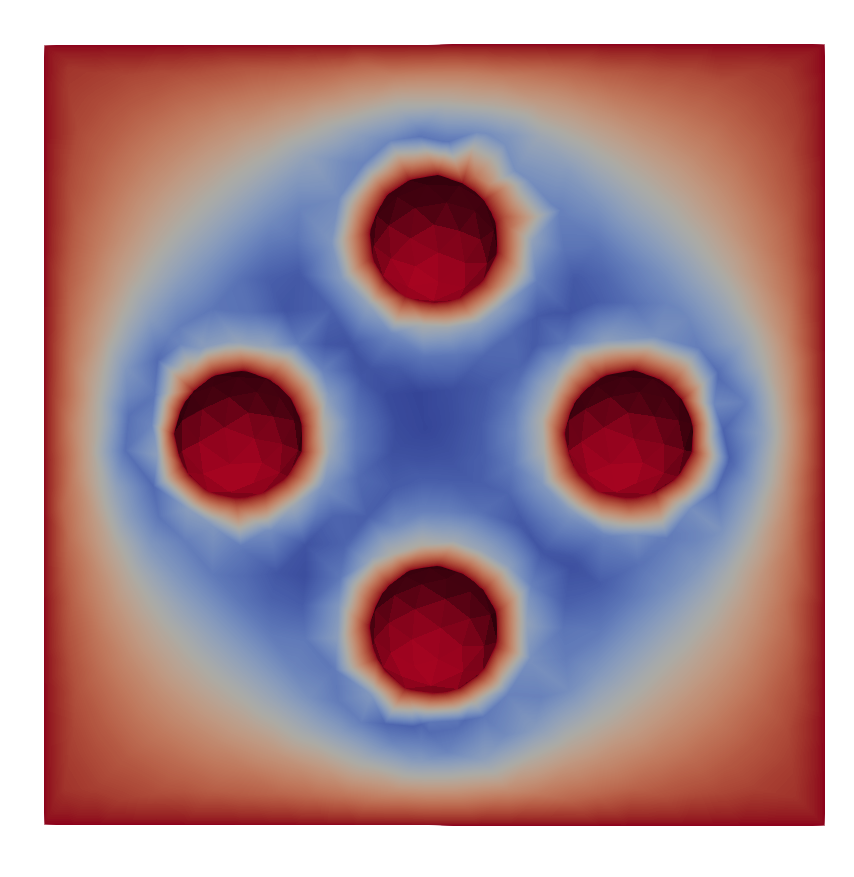}
\includegraphics[width=4cm]{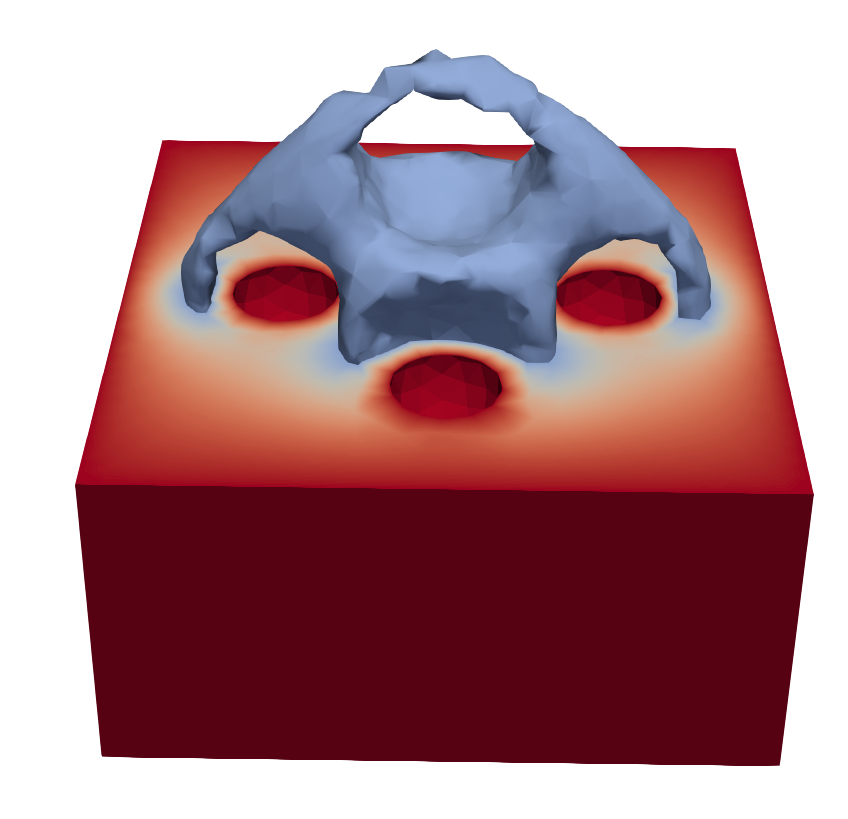}
\caption{Six-particle experiment of Section~\ref{sec:saturn_more}.
Defect around six spherical colloids viewed from different perspectives:
slice $y=0.5$ (left),
slice $z=0.5$ (middle),
and a 3D view clipped at $y=0.5$ (right);
the slice $x=0.5$ is similar to $y=0.5$. 
The blue ring surrounding the particles is the iso-surface for $s=0.22$,
which provides a good approximation of the defect.
Therefore the defect appears to be a combination of a large Saturn ring defect
around particles with center in the plane
$z=0.5$ and a planar X-like configuration with axis $x=0.5, y=0.5,-0.1<z<1$.
The final energy is 
$E^h[s_h^N,\nn_h^N] =12.562$,
$\min(s_h^N)= -0.0079$,
$\mathrm{err}_{\nn}=0.163$,
and $N=61$.}
\label{fig:final_saturn_six}
\end{center}
\end{figure}
  
\section{Proofs} \label{sec:proofs}

In this section, we present the proofs of the results discussed in 
Sections~\ref{sec:model}--\ref{sec:algorithm}.

\subsection{$L^2$-differentiability of admissible directors} \label{sec:L2diff}

We now prove that any admissible director field,
despite not being in $\HH^1(\Omega)$,
is $L^2$-differentiable in $\Omega\setminus\Sigma$. We refer to~\cite{BNW2020}
for a similar argument for a line field.

\begin{proof}[\bf Proof of Proposition~\ref{prop:L2diff}]
Since $(s,\nn,\uu) \in \A$, we have that $s \in H^1(\Omega)$ and $\uu = s \nn \in \HH^1(\Omega)$.
Then, for almost all $x \in \Omega$ (specifically, for all Lebesgue points of $(s,\uu,\grad s,\Grad \uu)$),
$s$ and $\uu$ are $L^2$-differentiable and their $L^2$-gradients coincide with their respective weak gradients for a.e. $x\in\Omega$,
i.e., as $r \to 0$, it holds that
\begin{align*}
\fint_{B_r(x)} \abs{s(y) - s(x) - \grad s(x) \cdot (y-x)}^2 \, \mathrm{d}y & = o(r^2), \\
\fint_{B_r(x)} \abs{\uu(y) - \uu(x) - \Grad \uu(x) (y-x)}^2 \, \mathrm{d}y & = o(r^2);
\end{align*}
see~\cite[Theorem~6.2]{eg2015}.
For almost all $x \in \Omega \setminus \Sigma$ 
(specifically, for all Lebesgue points of $(s,\nn,\uu,\grad s,\Grad \uu)$ in $x \in \Omega \setminus \Sigma$),
in view of the identity~\eqref{eq:productRule}, we define the quantity
\begin{equation} \label{eq:L2gradient}
\Grad \nn (x) :=
\frac{\Grad \uu(x) - \nn(x) \otimes \grad s(x)}{s(x)}.
\end{equation}
Let $r > 0$. It holds that
\begin{equation*}
\begin{split}
& \fint_{B_r(x)} \abs{\nn(y) - \nn(x) - \Grad \nn(x) (y-x)}^2 \, \mathrm{d}y \\
& \quad \lesssim \frac{1}{s(x)^2}\fint_{B_r(x)} \abs{\uu(y) - \uu(x) - \Grad \uu(x) (y-x)}^2 \mathrm{d}y \\
& \qquad + \frac{1}{s(x)^2}\fint_{B_r(x)} \abs{s(y) - s(x) - \grad s(x) \cdot (y-x)}^2 \abs{\nn(y)}^2 \mathrm{d}y \\
& \qquad + \frac{\abs{\grad s(x)}^2 }{s(x)^2}\fint_{B_r(x)} \abs{\nn(y) - \nn(x)}^2\abs{y-x}^2 \mathrm{d}y
= o(r^2)
\end{split}
\end{equation*}
as $r \to 0$.
This shows that $\Grad \nn (x)$ is the $L^2$-gradient of $\nn$ at $x$.
Moreover, \eqref{eq:structural2} follows from a direct computation.
In fact, in view of \eqref{eq:L2gradient}, there holds that
\begin{equation*}
\begin{split}
s(x)^2 \abs{\Grad\nn(x)}^2
& =
\abs{\Grad \uu(x) - \nn(x) \otimes \grad s(x)}^2 \\
& \stackrel{\phantom{\eqref{eq:L2gradient}}}{=} \abs{\Grad \uu(x)}^2
+ \abs{\nn(x) \otimes \grad s(x)}^2
- 2 \Grad\uu(x) : [\nn(x) \otimes \grad s(x)] \\
& \stackrel{\phantom{\eqref{eq:L2gradient}}}{=} \abs{\Grad \uu(x)}^2
- \abs{\grad s(x)}^2,
\end{split}
\end{equation*}
where the last equality follows from the identities
\begin{equation*}
\begin{split}
\abs{\nn(x) \otimes \grad s(x)}^2
= \sum_{i,j = 1}^d n_i(x)^2 \, \big(\partial_j s(x)\big)^2
= \sum_{j = 1}^d \big(\partial_j s(x)\big)^2
= \abs{\grad s(x)}^2
\end{split}
\end{equation*}
and for a.e $x \in \Omega \setminus \Sigma$
\begin{equation*}
\begin{split}
\Grad \uu (x) : [\nn(x) \otimes \grad s (x)]
& = \sum_{i,j = 1}^d \partial_j u_i(x) \, n_i (x) \, \partial_j s(x)
= \frac{1}{s(x)} \sum_{i,j = 1}^d \partial_j u_i(x) \, u_i (x) \, \partial_j s(x) \\
& = \frac{1}{2 s(x)} \sum_{i,j = 1}^d \partial_j \abs{u_i (x)}^2 \, \partial_j s(x)
= \frac{1}{2 s(x)} \sum_{j = 1}^d \partial_j \abs{\uu(x)}^2 \, \partial_j s(x) \\
& = \frac{1}{2 s(x)} \sum_{j = 1}^d \partial_j \big(s(x)^2\big) \, \partial_j s(x)
= \sum_{j = 1}^d \big(\partial_j s(x)\big)^2
= \abs{\grad s(x)}^2.
\end{split}
\end{equation*}
This concludes the proof.
\end{proof}

\subsection{Lim-sup inequality: Consistency} \label{sec:limsup}

We start with two results from~\cite{nwz2017} that we state without proofs. The
first one shows that the degree of orientation $s$ can be truncated near the end
points of the domain of definition $(-1/(d-1),1)$ of $\psi$ without increasing
the energy $E[s,\nn]$. We refer to \cite[Lemma~3.1]{nwz2017} for a proof.

\begin{lemma}[truncation of $s$] \label{lem:truncation}
Let the assumptions~\eqref{eq:DataAwayFromBoundaryS} and~\eqref{eq:DoubleWellAwayFromBoundaryS} hold.
Let $(s,\nn,\uu) \in \A(g,\rr)$.
For all $0<\rho \le \delta_0$, define
\begin{equation*}
s_\rho(x) := \min \bigg\{ 1 - \rho , \max \Big\{ - \frac{1}{d-1} + \rho , s(x) \Big\} \bigg \}
\quad
\text{and}
\quad
\uu_\rho(x) := s_\rho(x) \nn(x)
\quad
\text{for a.e. } x \in \Omega.
\end{equation*}
Then, $(s_{\rho}, \nn, \uu_\rho) \in \A(g,\rr)$ and
$E_1[s_{\rho}, \nn]\le E_1[s_{\rho}, \nn]$, $E_2[s_{\rho}] \le E_2[s]$.
\end{lemma}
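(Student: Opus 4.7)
The plan is to verify the three assertions of the lemma in order: admissibility of the truncated triple $(s_\rho, \nn, \uu_\rho)$, the elastic-energy inequality, and the potential-energy inequality. Setting $f(t) := \min\{1-\rho, \max\{-1/(d-1)+\rho, t\}\}$, I observe that $f$ is $1$-Lipschitz and monotone on $\R$, so by Stampacchia's chain rule, $s_\rho = f(s) \in H^1(\Omega)$ with $\grad s_\rho = \grad s \cdot \chi_{\{-1/(d-1)+\rho < s < 1-\rho\}}$. This yields simultaneously the bounds $-1/(d-1)+\rho \le s_\rho \le 1-\rho$, $s_\rho^2 \le s^2$, and $|\grad s_\rho| \le |\grad s|$ a.e. Moreover, since $-1/(d-1)+\rho < 0 < 1-\rho$, the map $f$ vanishes only at $0$, so the singular set is unchanged: $\{s_\rho = 0\} = \{s = 0\} = \Sigma$. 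Since $\rho \le \delta_0$, assumption \eqref{eq:DataAwayFromBoundaryS} makes the truncation inactive along $\Gamma_D$, hence $s_\rho|_{\Gamma_D} = g|_{\Gamma_D}$.

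To show $\uu_\rho = s_\rho \nn \in \HH^1(\Omega)$ with the correct trace, I would write
\begin{equation*}
\uu_\rho = \uu - (s - s_\rho)\nn
\end{equation*}
and argue that the correction $(s-s_\rho)\nn$ belongs to $\HH^1(\Omega)$. The key point is that $s - s_\rho$ is supported in $\{s > 1-\rho\} \cup \{s < -1/(d-1)+\rho\}$, where $|s|$ is bounded below by a positive constant depending on $\rho$; on this support $\nn = \uu/s$ is locally $\HH^1$ as the quotient of an $\HH^1$-function by a function bounded away from zero. Since $s - s_\rho$ vanishes continuously at the boundary of its support, the pieces match without jump and a classical product rule on the support yields
\begin{equation*}
\Grad \uu_\rho = \nn \otimes \grad s_\rho + s_\rho \Grad \nn \quad \text{a.e.\ in } \Omega \setminus \Sigma,
\end{equation*}
where $\Grad \nn$ is the $L^2$-gradient of Proposition~\ref{prop:L2diff}. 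The orthogonal decomposition \eqref{eq:structural2} then yields pointwise
\begin{equation*}
|\Grad \uu_\rho|^2 = |\grad s_\rho|^2 + s_\rho^2 |\Grad \nn|^2 \le |\grad s|^2 + s^2 |\Grad \nn|^2 = |\Grad \uu|^2,
\end{equation*}
so $\uu_\rho \in \HH^1(\Omega)$. Because the correction $(s-s_\rho)\nn$ has zero trace on $\Gamma_D$, we also obtain $\uu_\rho|_{\Gamma_D} = \uu|_{\Gamma_D} = \rr|_{\Gamma_D}$.

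The elastic-energy bound is now immediate: integrating $|\grad s_\rho|^2 \le |\grad s|^2$ and $s_\rho^2 |\Grad\nn|^2 \le s^2|\Grad\nn|^2$ over $\Omega\setminus\Sigma$ (with the convention $\int_\Sigma s_\rho^2 |\Grad\nn|^2 = 0$) gives $E_1[s_\rho,\nn] \le E_1[s,\nn]$. For the potential-energy bound, a three-way case split on whether $s(x)$ lies above $1-\rho$, inside the truncation interval, or below $-1/(d-1)+\rho$ reduces $E_2[s_\rho] \le E_2[s]$ to the pointwise inequality $\psi(s_\rho(x)) \le \psi(s(x))$; this is trivial on the middle piece, and on the outer two pieces it follows from assumption \eqref{eq:DoubleWellAwayFromBoundaryS} combined with the monotonicity of $\psi$ near the endpoints.

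The main obstacle I foresee is the rigorous derivation of the formula for $\Grad\uu_\rho$: because $\nn \notin \HH^1(\Omega)$ in general, the naive product rule is unavailable. The decomposition $\uu_\rho = \uu - (s-s_\rho)\nn$ circumvents this by localizing the computation to the region where $|s|$ is bounded below and where $\nn = \uu/s$ is genuinely $\HH^1$, reducing the argument to the classical product rule. An alternative, structurally identical route would be to repeat the $L^2$-differentiability computation of Section~\ref{sec:L2diff} with $s$ replaced by the Lipschitz truncation $f(s)$, arriving at the same gradient expression.
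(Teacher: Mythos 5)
The paper does not prove this lemma; it quotes it from \cite[Lemma~3.1]{nwz2017}, so there is no in-paper argument to compare your proposal against, and I can only assess it on its own terms.

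Your argument is essentially correct, and the soft parts are all handled properly: the chain-rule facts about $s_\rho=f(s)$, the identification $\{s_\rho=0\}=\Sigma$, the preservation of the boundary data via \eqref{eq:DataAwayFromBoundaryS}, the pointwise inequalities $|\grad s_\rho|\le|\grad s|$ and $s_\rho^2\le s^2$, and the reduction of $E_2[s_\rho]\le E_2[s]$ to the stated monotonicity of $\psi$ near the endpoints are all as they should be.

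The one step that is stated informally rather than rigorously is the claim $(s-s_\rho)\nn\in\HH^1(\Omega)$. Your ``local product rule plus matching without jump'' picture implicitly assumes that the superlevel and sublevel sets $\{|s|\geq c_\rho\}$ bound a region to which a trace--extension theorem applies, but for a general $H^1$ function $s$ in $d\ge 2$ these sets need not have Lipschitz boundary, and $s-s_\rho$ need not have a continuous representative. The cleaner realization of exactly your idea is to absorb the $1/s$ factor into a Lipschitz truncation \emph{before} multiplying by $\uu$. Set $\mu(t):=(t-f(t))/t$ for $t\neq0$ and $\mu(0):=0$. Then $\mu$ is bounded and Lipschitz on $(-1/(d-1),1)$: it vanishes identically on $[-1/(d-1)+\rho,\,1-\rho]$ and equals a smooth rational function with denominator bounded away from zero on the two clamped intervals. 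Hence $\mu(s)\in H^1(\Omega)\cap L^\infty(\Omega)$ by Stampacchia's chain rule, and $(s-s_\rho)\nn=\mu(s)\,s\nn=\mu(s)\,\uu$ is a product of two bounded $H^1$ functions, hence in $\HH^1(\Omega)$; equivalently, $\uu_\rho=\lambda(s)\uu$ with $\lambda:=1-\mu$ Lipschitz. The trace condition $\uu_\rho\vert_{\Gamma_D}=\rr\vert_{\Gamma_D}$ then follows because $\mu(g)\equiv0$ on $\Gamma_D$ by \eqref{eq:DataAwayFromBoundaryS}. This is the same splitting you propose, but it replaces the matching argument with a single application of the chain rule and requires no geometric assumptions on the level sets of $s$.
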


\noindent
A simple consequence of Lemma \ref{lem:truncation}, based on
the characteristic function $\chi_{\{s_\rho\ne s\}} \to_{\rho\to0} \chi_\Omega$, is that
$\norm[H^1(\Omega)^{1+d}]{(s,\uu) - (s_{\rho},\uu_{\rho})}\to_{\rho\to0} 0$.
 The second result is about regularization of admissible functions but
preserving the structural condition \eqref{eq:structural} and boundary values.
This is a rather tricky two-scale process fully discussed in
\cite[Proposition~3.2]{nwz2017}.

\begin{lemma}[regularization of functions in $\A(g,\rr)$] \label{lem:regularization}
Let the assumptions~\eqref{eq:DataAwayFromBoundaryS} and~\eqref{eq:DataAwayFrom0} hold,
and suppose that $\Gamma_D = \partial\Omega$.
Let $(s,\nn,\uu) \in \A(g,\rr)$ and $\rho \le \delta_0$ such that
\begin{equation*}
- \frac{1}{d-1} + \rho
\le s(x)
\le 1 - \rho
\quad
\text{for a.e. } x \in \Omega.
\end{equation*}
Then, for all $\delta>0$,
there exists a triple $(s_{\delta},\nn_{\delta},\uu_{\delta}) \in \A(g,\rr)$
such that
$s_{\delta} \in W^{1,\infty}(\Omega)$
and
$\uu_{\delta} \in \WW^{1,\infty}(\Omega)$.
Moreover, there holds 
$\norm[H^1(\Omega)^{1+d}]{(s,\uu) - (s_{\delta},\uu_{\delta})} \le \delta$,
$\norm[\LL^2(\Omega\setminus\Sigma)]{\nn - \nn_\delta} \le \delta$,
and
\begin{equation*}
- \frac{1}{d-1} + \rho
\le s_{\delta}(x)
\le 1 - \rho
\quad
\text{for all } x \in \Omega.
\end{equation*}
\end{lemma}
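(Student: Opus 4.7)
The plan is to exploit the fact that the auxiliary variable $\uu$ belongs to $\HH^1$ (and hence admits standard mollification) even though $\nn$ itself is only $\LL^\infty$, and to construct $(s_\delta,\nn_\delta,\uu_\delta)$ by a two-scale mollification that preserves the algebraic identity $\uu_\delta = s_\delta\nn_\delta$ with $|\nn_\delta|=1$ \emph{exactly}, rather than up to an error.

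First, I would extend $(s,\uu)$ to $\R^d$ by $g$ and $\rr$; since $\qq=\rr/g$ is $\WW^{1,\infty}$ in a neighborhood of $\Gamma_D=\partial\Omega$ by \eqref{eq:DataAwayFrom0}, the extended triple still satisfies the structural condition \eqref{eq:structural} outside $\Omega$. Let $\eta_\epsilon$ be a standard nonnegative mollifier. Define $s_\delta := \eta_\epsilon * s$, which is $W^{1,\infty}$ and preserves the pointwise bounds on $s$ because $\eta_\epsilon\ge 0$ and $\int\eta_\epsilon = 1$. The delicate step is producing a compatible $\nn_\delta$: introduce a second (slower) scale $\sigma=\sigma(\epsilon)\to 0$ with $\sigma\gg \epsilon$, and split $\Omega$ into a ``regular'' region $R_\sigma := \{|s_\delta|\ge 2\sigma\}$ and its complement. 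On $R_\sigma$, set $\nn_\delta := (\eta_\epsilon * \uu)/s_\delta$ renormalized to unit length; division by $s_\delta$ is safe since $|s_\delta|\ge 2\sigma$, and because $|\uu|=|s|$ a.e.\ one has $|(\eta_\epsilon * \uu)/s_\delta|\to 1$ a.e., so the renormalization is a small perturbation. Off $R_\sigma$, blend $\nn_\delta$ into a fixed constant unit vector via a Lipschitz partition of unity based on $s_\delta$; this blending is harmless because in that region $s_\delta$ is of order $\sigma$, so the product $s_\delta\nn_\delta$ is of order $\sigma$ no matter what $\nn_\delta$ is chosen to be. Finally, set $\uu_\delta := s_\delta\nn_\delta$, which is $\WW^{1,\infty}$ by construction and satisfies the structural condition by design.

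Boundary values $(s_\delta,\uu_\delta)=(g,\rr)$ on $\Gamma_D=\partial\Omega$ follow from the extension, together with the fact that on a $\delta_0$-tube around $\partial\Omega$ one has $s\ge\delta_0/2$, so for $\epsilon\ll\delta_0$ the construction of $\nn_\delta$ there reduces to mollification of $\qq$. Convergence $s_\delta\to s$ in $H^1$ and $\uu_\delta\to\uu$ in $\HH^1$ reduces to standard mollification estimates augmented by a control on the $\sigma$-thin transition region; convergence $\nn_\delta\to\nn$ in $\LL^2(\Omega\setminus\Sigma)$ follows by dominated convergence, using $|\nn_\delta|\le 1$ and a.e.\ convergence of $(\eta_\epsilon*\uu)/(\eta_\epsilon*s)$ to $\nn$ outside $\Sigma$.

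The hard part is matching the two scales: on the transition set $\{|s_\delta|\sim \sigma\}$ the quotient $(\eta_\epsilon * \uu)/s_\delta$ is ill-conditioned, and the Lipschitz bound on $\uu_\delta = s_\delta\nn_\delta$ requires a careful estimate of $\Grad s_\delta\otimes\nn_\delta + s_\delta\Grad\nn_\delta$ across this layer. A balance such as $\sigma = \epsilon^{1/2}$ makes the cutoff region's measure shrink fast enough relative to the mollification scale to absorb the cutoff contribution into the arbitrarily small $\delta$, but verifying this absorption \emph{and} preserving the exact identity $\uu_\delta = s_\delta\nn_\delta$ in a Lipschitz fashion is the technical core of the argument, and the reason the construction is the ``tricky two-scale process'' of \cite[Proposition~3.2]{nwz2017}.
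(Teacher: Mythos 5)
The paper does not prove this lemma; it states it without proof, citing \cite[Proposition~3.2]{nwz2017} and describing the construction as ``a rather tricky two-scale process.'' Your sketch follows the same mollification-plus-cutoff philosophy and, like the paper, ultimately defers the scale-matching to that reference, so there is no disagreement on strategy. However, two of the steps you treat as routine are genuine obstructions and belong to, rather than sit outside of, the technical core you acknowledge.

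\emph{Boundary data.} Extending $(s,\uu)$ by $(g,\rr)$ to $\R^d$ and mollifying does \emph{not} give $s_\delta\vert_{\Gamma_D}=g\vert_{\Gamma_D}$ or $\uu_\delta\vert_{\Gamma_D}=\rr\vert_{\Gamma_D}$: the mollified value at a boundary point is a nonlocal average that alters the trace unless $s\equiv g$ and $\uu\equiv\rr$ identically in a full collar of $\partial\Omega$, which is not assumed. Since membership in $\A(g,\rr)$ demands exact traces, an additional device is required — e.g.\ an inward dilation on star-shaped pieces, or mollifying only $s-g$ and $\uu-\rr$ after multiplying by a Lipschitz cutoff that vanishes in an $O(\epsilon)$-collar of $\partial\Omega$ (this is compatible with the structural condition because \eqref{eq:DataAwayFrom0} keeps $s$ bounded away from zero in that collar, so $\nn=\uu/s$ is under control there). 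Your assertion that the boundary values ``follow from the extension'' is not correct as stated.

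\emph{Normalization on $R_\sigma$.} Defining $\nn_\delta$ as the normalization of $\hat\uu/s_\delta$ (with $\hat\uu:=\eta_\epsilon*\uu$) presupposes $\hat\uu\neq 0$ wherever $|s_\delta|\ge 2\sigma$. But $\hat\uu=\eta_\epsilon*(s\nn)$ can be far smaller than $\eta_\epsilon*|s|$, and hence than $|s_\delta|$, wherever $\nn$ averages out directionally over a ball of radius $\epsilon$ — exactly what happens near a hedgehog or ring defect. The a.e.\ pointwise limit $|\hat\uu|/|s_\delta|\to 1$ that you invoke gives no \emph{uniform} lower bound on $|\hat\uu|$ on $R_\sigma$, so it does not follow that ``the renormalization is a small perturbation''; one needs a quantitative estimate tying the lower bound $|s_\delta|\ge 2\sigma$ to a lower bound on $|\hat\uu|$, and this is precisely where the two scales must be balanced carefully. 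Until such a bound is established, the Lipschitz control of $\uu_\delta=s_\delta\nn_\delta$ across the transition layer — which you correctly single out as the hard part — cannot even get started, because $\nn_\delta$ may be undefined or violently oscillatory on parts of $R_\sigma$ itself, not just on its boundary.
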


It is well known that the Lagrange interpolation operator $I_h:C(\overline{\Omega})\to V_h$ is not stable in $H^1(\Omega)$ unless $d=1$. We exploit stability in
$L^\infty(\Omega)$ to derive stability in $W^{1,p}(\Omega)$ for $p>d$.
\begin{lemma}[$W^{1,p}$-stability of Lagrange interpolant]\label{lem:stab-Ih}
Let $v \in W^{1,p}(\Omega)$ for $d< p \le \infty$. Then
\begin{equation}\label{eq:stab-Lagrange}
  \|\grad I_h v\|_{L^p(K)} \le C \|\grad v\|_{L^p(K)}
  \quad\text{for all }\, K\in\mesh,
\end{equation}
where $C>0$ depends only
on the shape-regularity
of $\{ \mesh \}$.
\end{lemma}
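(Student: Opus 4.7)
The plan is a standard scaling argument combined with the Sobolev embedding $W^{1,p}(\hat K)\hookrightarrow C^0(\hat K)$, which holds precisely because $p>d$ and makes the nodal interpolant well-defined pointwise. First, I would fix a reference simplex $\hat K$ and, for each $K\in\mesh$, write the affine map $F_K:\hat K\to K$ with Jacobian of size $h_K$; shape-regularity ensures all constants in the change of variables depend only on the shape-regularity constant, not on $K$. Setting $\hat v := v\circ F_K\in W^{1,p}(\hat K)$, the Lagrange interpolant commutes with the affine change of variables, so $I_h v\circ F_K = \hat I \hat v$, where $\hat I:C(\hat K)\to \poly^1(\hat K)$ is the reference nodal interpolant.

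Next I would prove the estimate on $\hat K$, namely $\|\grad \hat I \hat v\|_{L^p(\hat K)}\le C\|\grad\hat v\|_{L^p(\hat K)}$. Since $\hat I$ preserves constants, I use the trick of subtracting the mean: set $\bar v := \fint_{\hat K}\hat v$, so $\hat I\hat v - \bar v = \hat I(\hat v - \bar v)$. On the finite-dimensional space $\poly^1(\hat K)$, all norms are equivalent, hence
\begin{equation*}
\|\grad \hat I \hat v\|_{L^p(\hat K)}
= \|\grad \hat I(\hat v-\bar v)\|_{L^p(\hat K)}
\lesssim \max_{z\in\nodes(\hat K)} |\hat v(z)-\bar v|
\lesssim \|\hat v-\bar v\|_{L^\infty(\hat K)}.
\end{equation*}
By the Sobolev embedding $W^{1,p}(\hat K)\hookrightarrow L^\infty(\hat K)$ (valid because $p>d$) together with Poincaré's inequality on $\hat K$ for zero-mean functions, I obtain
\begin{equation*}
\|\hat v-\bar v\|_{L^\infty(\hat K)} \lesssim \|\hat v-\bar v\|_{W^{1,p}(\hat K)} \lesssim \|\grad \hat v\|_{L^p(\hat K)}.
\end{equation*}

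Finally, I would transport the estimate back to $K$ via the standard scaling of $L^p$-norms of gradients: $\|\grad u\|_{L^p(K)}\simeq h_K^{d/p-1}\|\grad \hat u\|_{L^p(\hat K)}$, where the implicit constants depend only on shape-regularity. Since this scaling is identical on both sides of the estimate on $\hat K$, the factors $h_K^{d/p-1}$ cancel and~\eqref{eq:stab-Lagrange} follows with a constant depending only on the shape-regularity of $\{\mesh\}$.

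The main obstacle, mild in nature, is the fact that $I_h$ is not bounded from $W^{1,p}$ to $W^{1,p}$ through a naive $L^\infty$-bound at the nodes; one must exploit that $I_h$ preserves constants and subtract the element average before invoking Sobolev embedding, so that the right-hand side involves only $\grad v$ rather than $v$ itself. The condition $p>d$ is essential here, otherwise $W^{1,p}$ functions need not be continuous and the nodal values $\hat v(z)$ are ill-defined.
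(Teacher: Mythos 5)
Your proposal is correct and follows essentially the same path as the paper's proof: both exploit that the Lagrange interpolant preserves constants, subtract the element average, bound the interpolant by its nodal values (finite-dimensionality / $L^\infty$-stability), and then invoke the Sobolev/Morrey embedding $W^{1,p}\hookrightarrow L^\infty$ for $p>d$ together with a Poincar\'e-type inequality for zero-mean functions. The only presentational difference is that you transport everything to a reference simplex and scale back, whereas the paper works directly on the physical element $K$ using the local inverse estimate $\|\grad I_h(v-\overline v_K)\|_{L^p(K)}^p \le |K|\,\|\grad I_h(v-\overline v_K)\|_{L^\infty(K)}^p \lesssim h_K^{d-p}\|v-\overline v_K\|_{L^\infty(K)}^p$ and the scaled Bramble--Hilbert estimate $\|v-\overline v_K\|_{L^\infty(K)}\lesssim h_K^{1-d/p}\|\grad v\|_{L^p(K)}$; the two formulations are equivalent, with the $h_K$-powers you note cancelling in exactly the way the paper's explicit exponents record.
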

\begin{proof}
Let $K\in\mesh$ be an arbitrary element and let $\overline{v}_K=\fint_K v$. An inverse estimate gives
\begin{equation*}
  \|\grad I_h v\|_{L^p(K)}^p \le |K| \, \|\grad I_h (v - \overline{v}_K)\|_{L^\infty(K)}^p
  \lesssim h_K^{d-p} \|v - \overline{v}_K\|_{L^\infty(K)}^p.
\end{equation*}
The Bramble--Hilbert estimate yields
$\|v - \overline{v}_K\|_{L^\infty(K)} \lesssim h_K^{1-d/p} \|\grad v\|_{L^p(K)}$ and
ends the proof.
\end{proof}

Applying a standard density argument in $W^{1,p}(\Omega)$, for $d<p<\infty$,
we deduce
\begin{equation}\label{eq:conv-Lagrange}
\lim_{h\to0} \|\grad (v - I_h v)\|_{L^p(\Omega)} = 0 \quad\textrm{for all } \, v \in W^{1,p}(\Omega).
\end{equation}

We have collected all the ingredients to show the existence of a recovery sequence.

\begin{proof}[\bf Proof of Theorem~\ref{thm:gamma_convergence}{\rm(i)}]
For the sake of clarity, we decompose the proof into seven steps.

\medskip
\emph{Step~1: Setup.}
Let $(s,\nn,\uu)\in\A(g,\rr)$.
For all $k \in \N$ such that $1/k \le \delta_0$,
let $0 < \delta_k \le 1/k$ be sufficiently small.
Applying successively Lemma~\ref{lem:truncation} (with $\rho = 1/k$)
and Lemma~\ref{lem:regularization} (with $\delta = \delta_k$),
we obtain $(s_k,\nn_k,\uu_k)\in\A(g,\rr)$
satisfying $(s_k, \uu_k) \in [W^{1,\infty}(\Omega)]^{1+d}$
and 
$- 1/(d-1) + 1/k \le s_k \le 1 - 1/k$ in $\Omega$ for all $k$.
Moreover, we have that
\[
\norm[H^1(\Omega)^{1+d}]{(s,\uu) - (s_k,\uu_k)} \to 0,
\qquad
\norm[\LL^2(\Omega\setminus\Sigma)]{\nn - \nn_k} \to 0.
\]

Since $(s,\nn,\uu) \in \A(g,\rr)$,
Proposition~\ref{prop:L2diff} guarantees that
$\nn$ is $L^2$-differentiable a.e.\ in $\Omega \setminus \Sigma$,
with its $L^2$-gradient given by~\eqref{eq:L2gradient_formula}
and that the identity~\eqref{eq:structural2} holds.
The same result is valid for $\nn_k$ a.e.\ in $\Omega \setminus \Sigma_k$,
where $\Sigma_k := \{ x \in \Omega : s_k(x) = 0\}$.

Let $s_{k,h} := I_h[s_k]$ and $\uu_{k,h} :=I_h[\uu_k]$.
Let $\nn_{k,h}\in\VVV_h$ be defined, for all $z\in\nodes$, as
\begin{equation*}
\nn_{k,h}(z)
:=\begin{cases} 
\uu_{k,h}(z)/s_{k,h}(z)=\uu_k(z)/s_k(z) & \text{if } z \in \Omega\setminus\Sigma_k, \\
\text{an arbitrary unit vector} & \text{if } z \in \Sigma_k.
\end{cases}
\end{equation*}
Note that,
by construction,
$(s_{k,h},\nn_{k,h},\uu_{k,h})$ satisfies the discrete structural condition~\eqref{eq:structural_h}, and $\|\nn_{k,h}\|_{L^\infty(\Omega)} \le C$.
Moreover, since
$0 = \norm[L^1(\Omega)]{I_h \big[\abs{\nn_{k,h}}^2 \big]-1} \le \eps$
as well as $s_{k,h}(z) = g_h(z)$ and $\uu_{k,h}(z) = \rr_h(z)$ for all $z \in \nodes \cap \Gamma_D$, we deduce $(s_{k,h},\nn_{k,h},\uu_{k,h})\in\A_{h,\eps}(g_h,\rr_h)$.

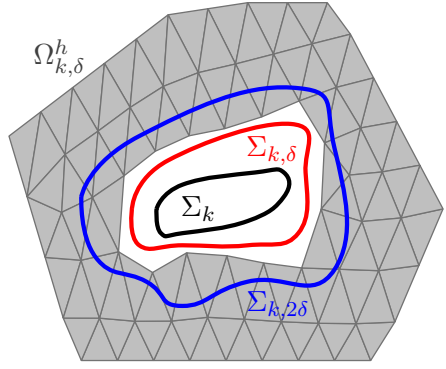
\begin{figure}[h]
\begin{tikzpicture}
\begin{axis}[
axis lines = none
]
\addplot[patch,
color=gray!50!white, 
faceted color = gray!90!black,
line width = 0.5pt,
patch table = {./data/elements.dat}] file{./data/coordinates.dat};
\draw[ultra thick, blue] plot[smooth, tension=.9] coordinates {(0,0.75) (0.40,1.0) (1.45,1.2) (1.75,0.95) (1.75,0.45) (1.15,0.4) (0.70,0.3) (0.45,0.40) (0.1,0.5) (0,0.75)};
\draw[ultra thick, red] plot[smooth, tension=.8] coordinates {(0.35,0.70) (0.60,0.90) (1.45,1.05) (1.55,0.85) (1.50,0.60) (1.15,0.55) (0.70,0.55) (0.40,0.55) (0.35,0.70)};
\draw[ultra thick] plot[smooth, tension=.8] coordinates {(0.52,0.70) (0.70,0.8) (1.20,0.85) (1.4,0.85) (1.40,0.75) (1.15,0.65) (0.70,0.6) (0.55,0.6) (0.52,0.70)};
\node[anchor=south east] (x) at (1.0,0.61) {$\Sigma_k$};
\node[anchor=south east] (x) at (1.57,0.84) {\textcolor{red}{$\Sigma_{k,\delta}$}};
\node[anchor=south east] (x) at (1.65,0.2) {\textcolor{blue}{$\Sigma_{k,2\delta}$}};
\node[anchor=south east] (x) at (0.1,1.2) {\textcolor{gray!50!black}{$\Omega_{k,\delta}^h$}};
\end{axis}
\end{tikzpicture}
\caption{A schematic illustration of the mutual relations of the sets defined in Step~1
of the proof of Theorem~\ref{thm:gamma_convergence}{\rm(i)}.
Note that the set $\Sigma_k \subset \Omega$ is closed,
as it is the preimage of a closed set with respect to the continuous function $s_k$,
but it might be more topologically complicated than in the picture.}
\label{fig:sets_limsup}
\end{figure}

Given $\delta > 0$,
we consider the sets
\begin{equation*}
\Sigma_{k,\delta} := \{ x \in \Omega : \abs{s_k(x)} \le \delta \}
\quad
\text{and}
\quad
\Omega_{k,\delta}^h
:= \bigcup \{K \in \mesh : K \cap \Sigma_{k,\delta} = \emptyset\}.
\end{equation*}
Note that, by construction, there holds
$\Omega_{k,\delta}^h \subset \Omega \setminus \Sigma_{k,\delta}$; we refer to Figure \ref{fig:sets_limsup}.

Let $K \in \mesh$ such that $K \cap \Sigma_{k,\delta} \neq \emptyset$.
In particular, there exists $x_0 \in K \cap \Sigma_{k,\delta}$.
For $x_1 \in K$ arbitrary,
Lipschitz continuity of $s_k$ yields
\begin{equation*}
\abs{s_k(x_1)}
\le \abs{s_k(x_0)} + \abs{s_k(x_1) - s_k(x_0)}
\le \delta + C_kh.
\end{equation*}
In particular, $\Omega \setminus \Omega_{k,\delta}^h \subset \Sigma_{k,2\delta}$
provided $h$ is sufficiently small so that $C_k h\le\delta$; see Figure~\ref{fig:sets_limsup}.

Now, for any $x \in \Omega_{k,\delta}^h$, we infer that
\begin{equation*}
\abs{s_k(x) - s_{k,h}(x)}
\le \norm[L^{\infty}(\Omega_{k,\delta}^h)]{s_k - s_{k,h}}
= \norm[L^{\infty}(\Omega_{k,\delta}^h)]{s_k - I_h[s_k]}
\lesssim h \norm[L^{\infty}(\Omega_{k,\delta}^h)]{\grad s_k},
\end{equation*}
whence
\begin{equation*}
\abs{s_{k,h}(x)}
\ge \abs{s_k(x)} - \abs{s_k(x) - s_{k,h}(x)}
> \delta - C h \norm[L^{\infty}(\Omega_{k,\delta}^h)]{\grad s_k}
> \delta/2
\end{equation*}
provided the mesh size $h$ is chosen to be sufficiently small.
Hence, for those $h$, we can define
$\wt\nn_{k} := \uu_{k,h} / s_{k,h}$ in $\Omega_{k,\delta}^h$.
Note that, by definition, the relation $\nn_{k,h} = I_h[\wt\nn_k]$ in $\Omega_{k,\delta}^h$
holds.

To conclude this step, we observe that the $L^2$-gradient $\Grad \nn_{k}$
  of $\nn_{k}$ exists a.e.\ in $\Omega\setminus\Sigma_k$ and
\begin{equation} \label{eq:auxiliary_gradN}
\int_{\Omega_{k,\delta}^h}\abs{\Grad \nn_{k} - \Grad \nn_{k,h}}^2
\lesssim
\int_{\Omega_{k,\delta}^h}\abs{\Grad \nn_{k} - \Grad \wt\nn_{k}}^2
+
\int_{\Omega_{k,\delta}^h}\abs{\Grad \wt\nn_{k} - \Grad \nn_{k,h}}^2,
\end{equation}
where $\Grad \wt\nn_{k}$ and $\Grad \nn_{k,h}$ denote
the weak gradients of $\wt\nn_{k}$ and $\nn_{k,h}$, respectively, which
coincide elementwise with their classical gradients in $\Omega_{k,\delta}^h$.
In the following steps,
we will show that,
for fixed $k \in \N$ and $\delta>0$,
both two terms on the right-hand side of~\eqref{eq:auxiliary_gradN} converge to $0$ as $h \to 0$.

\medskip
\emph{Step~2: Proof of}
$\lim_{h\to0} \int_{\Omega_{k,\delta}^h}\abs{\wt\nn_{k} - \nn_{k,h}}^2 + \abs{\Grad \wt\nn_{k} - \Grad \nn_{k,h}}^2 = 0.$ Since $\nn_{k,h} = I_h[\wt\nn_{k}]$
in $\Omega_{k,\delta}^h$,
a classical local interpolation estimate yields that
\begin{equation*}
\int_{\Omega_{k,\delta}^h}\abs{\Grad \wt\nn_{k} - \Grad \nn_{k,h}}^2
=
\sum_{\substack{K \in \mesh \\ K \cap \Sigma_{k,\delta} = \emptyset}}
\int_K \big\lvert \Grad (\wt\nn_{k} - I_h [\tilde\nn_{k}]) \big\rvert^2
\lesssim
\sum_{\substack{K \in \mesh \\ K \cap \Sigma_{k,\delta} = \emptyset}}
h_K^2 \big\lVert D^2 \wt\nn_{k} \big\rVert_{\LL^2(K)}^2.
\end{equation*}
Moreover, in view of $\wt\nn_k = \uu_{k,h} / s_{k,h}$ in $\Omega_{k,\delta}^h$,
explicit computations reveal that
\begin{align*}
\partial_i \wt\nn_{k}
& =
s_{k,h}^{-1} \, \partial_i \uu_{k,h} - s_{k,h}^{-2} \, \partial_i s_{k,h} \, \uu_{k,h} 
=
s_{k,h}^{-1} \big( \partial_i \uu_{k,h} - \partial_i s_{k,h} \, \wt\nn_{k} \big),\\
\partial_j \partial_i \wt\nn_{k}
& =
s_{k,h}^{-1} \big(
s_{k,h}^{-1} \, \partial_j s_{k,h} \, \partial_i s_{k,h} \, \wt\nn_k
- \partial_i s_h \, \partial_j \wt\nn_k
- s_{k,h}^{-1} \, \partial_j s_{k,h} \, \partial_i \uu_{k,h}
\big),
\end{align*}
for all $1\le i,j \le d$.
Several applications of the generalized H\"older inequality,
in conjunction with the lower bound $|s_{k,h}|>\delta/2$ in $\Omega_{k,\delta}^h$, thus yield
\begin{equation*}
\begin{split}
\big\lVert D^2 \wt\nn_{k} \big\rVert_{\LL^2(K)}
& \lesssim
\delta^{-3} \norm[L^8(K)]{\grad s_{k,h}}^2 \norm[\LL^4(K)]{\uu_{k,h}} \\
& \quad + \delta^{-1} \norm[\LL^4(K)]{\grad s_{k,h}} \big( \delta^{-1} \norm[\LL^4(K)]{\Grad \uu_{k,h}}
+ \delta^{-2} \norm[\LL^8(K)]{\uu_{k,h}} \norm[\LL^8(K)]{\grad s_{k,h}} \big) \\
& \quad + \delta^{-2} \norm[\LL^4(K)]{\grad s_{k,h}} \norm[\LL^4(K)]{\Grad \uu_{k,h}}.
\end{split}
\end{equation*}
In view of \eqref{eq:stab-Lagrange},
$s_{k,h}$ (resp., $\uu_{k,h}$) is uniformly bounded in $W^{1,p}(\Omega)$ (resp., $\WW^{1,p}(\Omega)$) when $d<p\le\infty$.
Altogether,
we thus obtain the desired estimate
\begin{equation*}
\int_{\Omega_{k,\delta}^h}\abs{\Grad \wt\nn_{k} - \Grad \nn_{k,h}}^2
  + \sum_{\substack{K \in \mesh \\ K \cap \Sigma_{k,\delta} = \emptyset}}h_K^{-2} \int_{K} \abs{\wt\nn_{k} - \nn_{k,h}}^2
\lesssim
\sum_{\substack{K \in \mesh \\ K \cap \Sigma_{k,\delta} = \emptyset}}
h_K^2 \big\lVert D^2 \wt\nn_{k} \big\rVert_{\LL^2(K)}^2
\lesssim
h^2.
\end{equation*}

\medskip
\emph{Step~3: Proof of} 
$\lim_{h\to0}  \int_{\Omega_{k,\delta}^h}\abs{\nn_{k} - \wt\nn_{k}}^2 + \abs{\Grad \nn_{k} - \Grad \wt\nn_{k}}^2 = 0$.
We first observe that
\begin{equation*}
\begin{split}
\norm[\LL^p(\Omega_{k,\delta}^h)]{\wt\nn_{k} - \nn_{k}}
& =
\norm[\LL^p(\Omega_{k,\delta}^h)]{s_{k,h}^{-1}\uu_{k,h} - s_{k}^{-1}\uu_{k}} \\
& \lesssim
\delta^{-2}
\norm[L^p(\Omega_{k,\delta}^h)]{s_{k} - s_{k,h}}
\norm[\LL^\infty(\Omega_{k,\delta}^h)]{\uu_{k,h}}
+ \delta^{-1} \norm[\LL^p(\Omega_{k,\delta}^h)]{\uu_{k,h} - \uu_{k}},
\end{split}
\end{equation*}
for all $p \ge 1$. This shows, in view of \eqref{eq:conv-Lagrange}, that $\norm[\LL^p(\Omega_{k,\delta}^h)]{\wt\nn_{k} - \nn_{k}} \to 0$ as $h \to 0$ for $d<p<\infty$. To deal with the gradient part, we resort to 
available expressions of $\Grad \nn_{k}$ and $\Grad \wt\nn_{k}$ to write
\begin{equation*}
\int_{\Omega_{k,\delta}^h}\!\abs{\Grad \nn_{k} - \Grad \wt\nn_{k}}^2
=
\int_{\Omega_{k,\delta}^h}\!\abs{s_{k}^{-1}(\Grad \uu_{k} - \nn_{k}\otimes\grad s_{k})  - s_{k,h}^{-1}(\Grad \uu_{k,h} - \wt\nn_{k}\otimes\grad s_{k,h})}^2 \le T_1 + T_2 + T_3,
\end{equation*}
where
\begin{align*}
T_1 & :=
\int_{\Omega_{k,\delta}^h}\abs{s_{k,h}^{-1}(\Grad \uu_{k} - \Grad \uu_{k,h})}^2,
\\
T_2 & :=
\int_{\Omega_{k,\delta}^h}\abs{s_{k,h}^{-1}(\wt\nn_{k}\otimes\grad s_{k,h} - \nn_{k}\otimes\grad s_{k})}^2,
\\
T_3 & := \int_{\Omega_{k,\delta}^h}\abs{(s_{k}^{-1} -  s_{k,h}^{-1})(\Grad \uu_{k} - \nn_{k}\otimes\grad s_{k})}^2.
\end{align*}
Recalling again $|s_k|,|s_{k,h}| > \delta / 2$ in $\Omega_{k,\delta}^h$, as well as
\eqref{eq:conv-Lagrange}, the asserted estimate follows from
\begin{align*}
T_1
& \lesssim \delta^{-2} \norm[\LL^2(\Omega)]{\Grad (\uu_{k} - I_h\uu_{k})}^2,
\\
T_2
& \lesssim \delta^{-2} \norm[\LL^4(\Omega_{k,\delta}^h)]{\grad s_{k,h}}^2 \norm[\LL^4(\Omega_{k,\delta}^h)]{\wt\nn_{k} - \nn_{k}}^2
+ \delta^{-2} \norm[\LL^4(\Omega_{k,\delta}^h)]{\nn_{k}}^2 \norm[\LL^4(\Omega_{k,\delta}^h)]{\grad (s_k - I_h s_k)}^2,
\\
T_3
& \lesssim
\delta^{-4} \norm[L^{4}(\Omega)]{s_{k} - I_hs_{k}}^2
\norm[\LL^4(\Omega)]{\Grad \uu_{k} - \nn_{k}\otimes\grad s_{k}}^2.
\end{align*}

\medskip
\emph{Step~4: Proof of}
$\lim_{h\to0} \int_{\Omega} s_{k,h}^2\abs{\Grad\nn_{k,h}}^2 = \int_{\Omega\setminus\Sigma_k}s_k^2\abs{\Grad\nn_k}^2$. Combining Steps 2--3 gives
\begin{equation}\label{eq:conv_nkh}
\lim_{h \to 0} \int_{\Omega_{k,\delta}^h}\abs{\Grad \nn_{k} - \Grad \nn_{k,h}}^2 = 0.
\end{equation}
In order to exploit this property, we split the integral under consideration as
\begin{equation} \label{eq:auxiliary_energy1}
\int_{\Omega} s_{k,h}^2\abs{\Grad\nn_{k,h}}^2
=
\int_{\Omega_{k,\delta}^h} s_{k,h}^2\abs{\Grad\nn_{k,h}}^2
+ \int_{\Omega\setminus \Omega_{k,\delta}^h} s_{k,h}^2\abs{\Grad\nn_{k,h}}^2.
\end{equation}
The fact that $s_{k,h} \to s_{k}$ strongly in $L^p(\Omega)$ as $h \to 0$ for
$d<p<\infty$, according to \eqref{eq:conv-Lagrange}, together with
$s_{k,h} \in L^\infty(\Omega)$ uniformly in $h$, $\Grad\nn_{k}\in L^{\infty}(\Omega\setminus\Sigma_{k,\delta})$ and \eqref{eq:conv_nkh}, yields
\begin{equation*}
\lim_{h \to 0} 
\bigg\lvert
\int_{\Omega_{k,\delta}^h} s_{k,h}^2\abs{\Grad\nn_{k,h}}^2
- \int_{\Omega_{k,\delta}^h} s_{k}^2\abs{\Grad\nn_{k}}^2
\bigg\rvert
= 0.
\end{equation*}
Since
$\Omega \setminus \Sigma_{k,2\delta} \subset \Omega_{k,\delta}^h \subset \Omega \setminus \Sigma_{k,\delta}$,
we deduce
\begin{equation*}
\begin{split}
\lim_{\delta\to0} \lim_{h \to 0} \int_{\Omega_{k,\delta}^h} s_{k,h}^2\abs{\Grad\nn_{k,h}}^2
& =
\int_{\Omega \setminus \Sigma_k} s_{k}^2\abs{\Grad\nn_{k}}^2.
\end{split}
\end{equation*}
Now, we consider the second term on the right-hand side of~\eqref{eq:auxiliary_energy1}.
Since $\Omega\setminus \Omega_{k,\delta} \subset \Sigma_{k,2\delta}$ and
$s_{k,h} \Grad\nn_{k,h} = \Grad(s_{k,h} \nn_{k,h}) - \nn_{k,h} \otimes \grad s_{k,h}$,
using $\uu_{k,h}=I_h(s_{k,h} \nn_{k,h})$, we see that
\begin{equation*}
\begin{split}
\int_{\Omega\setminus \Omega_{k,\delta}^h} s_{k,h}^2\abs{\Grad\nn_{k,h}}^2
& 
\lesssim
\int_{\Sigma_{k,2\delta}} \abs{\Grad(s_{k,h} \nn_{k,h})}^2
+
\int_{\Sigma_{k,2\delta}} \abs{\nn_{k,h} \otimes \grad s_{k,h}}^2 \\
& \le
\int_{\Sigma_{k,2\delta}} \abs{\Grad(s_{k,h} \nn_{k,h}) - \Grad I_h(s_{k,h} \nn_{k,h})}^2
+
\int_{\Sigma_{k,2\delta}} \abs{\Grad\uu_{k,h}}^2
+
\int_{\Sigma_{k,2\delta}} \abs{\grad s_{k,h}}^2.
\end{split}
\end{equation*}
Combining an interpolation estimate with the fact that $s_{k,h}$ and $\nn_{k,h}$ are piecewise affine, and exploiting an inverse estimate to bound 
$\norm[\LL^{\infty}(K)]{\Grad \nn_{k,h}}$ in terms of $\norm[\LL^{\infty}(K)]{\nn_{k,h}}\le C$, yields
\begin{align*}
  \int_{\Sigma_{k,2\delta}} \abs{\Grad(s_{k,h} \nn_{k,h}) &- \Grad I_h(s_{k,h} \nn_{k,h})}^2
  \lesssim 
  \sum_{\substack{K \in \mesh \\ K \cap \Sigma_{k,2\delta} \neq \emptyset}} h_K^2 \norm[\LL^2(K)]{D^2(s_{k,h} \nn_{k,h})}^2
  \\
  &\lesssim
\sum_{\substack{K \in \mesh \\ K \cap \Sigma_{k,2\delta} \neq \emptyset}}
h_K^2 \norm[\LL^2(K)]{\grad s_{k,h}}^2 \norm[\LL^{\infty}(K)]{\Grad \nn_{k,h}}^2
\lesssim \sum_{\substack{K \in \mesh \\ K \cap \Sigma_{k,2\delta} \neq \emptyset}}
\norm[\LL^2(K)]{\grad s_{k,h}}^2.
\end{align*}
Using the $W^{1,p}$-stability \eqref{eq:stab-Lagrange} of the nodal interpolant
with $p>d$ for elements $K \cap \Sigma_{k,2\delta} \neq \emptyset$, we end up with the following as $\delta\to0$
\begin{equation*}
\int_{\Omega\setminus \Omega_{k,\delta}^h} s_{k,h}^2\abs{\Grad\nn_{k,h}}^2
\lesssim
\norm[\LL^p(\Sigma_{k,3\delta})]{\Grad\uu_{k}}^2
+ \norm[\LL^p(\Sigma_{k,3\delta})]{\grad s_{k}}^2
\to
\norm[\LL^p(\Sigma_{k})]{\Grad\uu_{k}}^2
+ \norm[\LL^p(\Sigma_{k})]{\grad s_{k}}^2
= 0,
\end{equation*}
because the Lipschitz continuity of $s_k$ implies $K\subset\Sigma_{k,3\delta}$
provided $h$ is sufficiently small.

\medskip
\emph{Step~5: Proof of} 
$\lim_{h\to0} \int_{\Omega} \abs{\nn_{k,h} \otimes \grad s_{k,h}}^2
= \int_{\Omega} \abs{\grad s_{k}}^2$.
We split the integral as
\begin{equation*}
\int_{\Omega} \abs{\nn_{k,h} \otimes \grad s_{k,h}}^2
=
\int_{\Omega_{k,\delta}^h} \abs{\nn_{k,h} \otimes \grad s_{k,h}}^2
+
\int_{\Omega\setminus\Omega_{k,\delta}^h} \abs{\nn_{k,h} \otimes \grad s_{k,h}}^2.
\end{equation*}
Exploiting the identity
$\nn_{k,h} \otimes \grad s_{k,h} -\nn_{k}\otimes \grad s_{k}
=(\nn_{k,h}  - \nn_{k})\otimes\grad s_{k} + \nn_{k,h} \otimes (\grad s_{k,h} - \grad s_{k})$,
and using the convergence results for $s_{k,h}$ and $\nn_{k,h}$ in $\Omega_{k,\delta}^h$
from Steps 1--3, we readily see that
\begin{equation*}
\lim_{h\to0}\int_{\Omega_{k,\delta}^h} \abs{\nn_{k,h} \otimes \grad s_{k,h}}^2
=
\int_{\Omega\setminus\Sigma_{k,\delta}} \abs{\nn_{k} \otimes \grad s_{k}}^2
\to
\int_{\Omega\setminus\Sigma_{k}} \abs{\nn_{k} \otimes \grad s_{k}}^2
=\int_{\Omega} \abs{\grad s_{k}}^2,
\end{equation*}
as $\delta\to0$.
Moreover, employing $\Omega\setminus \Omega_{k,\delta} \subset \Sigma_{k,2\delta}$
together with \eqref{eq:stab-Lagrange} implies
\begin{equation*}
\int_{\Omega\setminus\Omega_{k,\delta}^h} \abs{\nn_{k,h} \otimes \grad s_{k,h}}^2
\lesssim
\norm[\LL^2(\Sigma_{k,2\delta})]{\grad I_h s_{k}}^2
\lesssim
\norm[\LL^p(\Sigma_{k,2\delta})]{\grad I_h s_{k}}^2
\lesssim
\norm[\LL^p(\Sigma_{k,3\delta})]{\grad s_{k}}^2.
\end{equation*}
Finally, taking $\delta \to 0$ yields
$\norm[\LL^p(\Sigma_{k,3\delta})]{\grad s_{k}}\to
\norm[\LL^p(\Sigma_k)]{\grad s_{k}} = 0$, which is the desired limit.

\medskip

\emph{Step~6: Convergence of} $\{ s_{k,h}\}$, $\{ \nn_{k,h}\}$, \emph{and} $\{ \uu_{k,h}\}$.
The triangle inequality gives 
\begin{equation*}
\norm[H^1(\Omega)]{s_{k,h} - s}
\le
\norm[H^1(\Omega)]{s_{k,h} - s_k}
+
\norm[H^1(\Omega)]{s_k - s}
\to 0
\quad
\text{as } h \to 0
\text{ and } k \to \infty.
\end{equation*}
Likewise, $\uu_{k,h}\to\uu$ in $\HH^1(\Omega)$ as $h\to0$ and
$k\to\infty$. Turning to $\nn$, we observe that
\begin{equation*}
\norm[\LL^2(\Omega\setminus\Sigma)]{\nn_{k,h} - \nn_k}
\lesssim
\norm[\LL^2(\Omega_{k,\delta}^h)]{\nn_{k,h} - \nn_k}
+
\norm[\LL^2(\Sigma_{k,2\delta}\setminus\Sigma)]{\nn_{k,h} - \nn_k},
\end{equation*}
and $\norm[\LL^2(\Omega_{k,\delta}^h)]{\nn_{k,h} - \nn_k}\to0$ as
$h \to 0$ from Steps~2--3. Instead, for the second term we have
\begin{equation*}
\norm[\LL^2(\Sigma_{k,2\delta}\setminus\Sigma)]{\nn_{k,h} - \nn_k}
\le 2 \abs{\Sigma_{k,2\delta} \setminus \Sigma}^{1/2}
\to 0
\quad
\text{as }\delta \to 0
\text{ and } k \to \infty.
\end{equation*}
The convergence of $\nn_{k,h}$ to $\nn$ in $\LL^2(\Omega\setminus\Sigma)$ then follows from the triangle inequality.

\medskip

\emph{Step~7: Convergence of energy}.
The previous steps yield
\begin{equation*}
  \lim_{h \to 0} E_1^h[s_{k,h},\nn_{k,h}] = E_1[s_k,\nn_k] =
  \frac12 \int_{\Omega\setminus\Sigma_k} \kappa|\grad s_k|^2 + s_k^2 |\Grad\nn_k|^2.
\end{equation*}
To prove that $E_1[s_k,\nn_k] \to E_1[s,\nn]$ as $k \to \infty$ we
resort to \eqref{eq:equiv-energy}, namely
\[
E_1[s_k,\nn_k]=\wt{E}_1[s_k,\uu_k] = \frac12 \int_\Omega (\kappa-1)|\grad s_k|^2
  + |\Grad\uu_k|^2 \to \frac12 \int_\Omega (\kappa-1)|\grad s|^2
  + |\Grad\uu|^2 = E_1[s,\nn].
\]
We now deal with $E_2$.
Since $- 1/(d-1) + 1/k \le s_k \le 1 - 1/k$ in $\Omega$,
assumption~\eqref{eq:DoubleWellAwayFromBoundaryS} guarantees that
$0 \le \psi(s_{k,h}) \le \max\{ \psi(-1/(d-1) + 1/k) , \psi(1- 1/k) \}$.
Hence, the dominated convergence theorem implies that
\begin{equation*}
\lim_{h \to 0} E_2^h[s_{k,h}]
=
\lim_{h \to 0} \int_{\Omega} \psi(I_h s_k)
=
\int_{\Omega} \lim_{h \to 0} \psi(I_h s_k)
= \int_{\Omega} \psi(s_{k})
= E_2[s_{k}].
\end{equation*}
Moreover, the monotonicity of $\psi$ in $(-1/(d-1),-1/(d-1)+\delta_0)$ and in $(1-\delta_0,1)$ translates into $\psi(s_k)\ge 0$ increasing and converging pointwise to $\psi(s)$, whence the monotone convergence theorem gives
\[  
E_2[s_k] = \int_\Omega \psi(s_k) \to \int_\Omega \psi(s) = E_2 [s].
\]
Consequently, the sequence
$(s_h,\nn_h,\uu_h) := (s_{k,h},\nn_{k,h},\uu_{k,h})\in\A_{h,\eps}(g_h,\rr_h)$ for $k$
sufficiently large depending on $h$
converges to $(s,\nn,\uu)$ in
$H^1(\Omega) \times \LL^2(\Omega\setminus\Sigma) \times \HH^1(\Omega)$
as $h \to 0$ and satisfies

\begin{equation*}
\lim_{h \to 0} E^h[s_h,\nn_h] = E[s,\nn].
\end{equation*}
This implies the lim-sup inequality~\eqref{eq:limsup_inequality} and concludes the proof.
\end{proof}

\subsection{Lim-inf inequality: Stability} \label{sec:liminf}

To show the lim-inf inequality,
we first prove that admissible discrete pairs $(s_h,\nn_h)$
with uniformly bounded energy
are uniformly bounded in $H^1$. In constrast to~\cite{nwz2017},
we do not need to assume that $\mesh$ is weakly acute.

\begin{lemma}[coercivity] \label{lem:coercivity}
Let $\{(s_h,\nn_h,\uu_h)\} \subset V_h \times \VVV_h \times \VVV_h$ satisfy
$\uu_h = I_h[s_h \nn_h]$ and $\abs{\nn_h(z)} \geq 1$ for all $z \in \nodes$.
Then, there exists a constant $C>0$ depending only on the shape-regularity
of $\{ \mesh \}$ and $\kappa$ such that
\begin{equation*}
  C \max\Big\{ \norm[\LL^2(\Omega)]{\Grad\uu_h}^2,
  \norm[\LL^2(\Omega)]{\Grad(s_h \nn_h)}^2,
  \norm[\LL^2(\Omega)]{\grad s_h}^2 \Big\}
  \le E_1^h[s_h,\nn_h].
\end{equation*}
\end{lemma}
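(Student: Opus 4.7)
The plan is to bound each of the three quantities separately by $E_1^h[s_h,\nn_h]$, using the affine structure of $s_h$ and $\nn_h$ on each element together with elementary interpolation and inverse estimates. The hypothesis $|\nn_h(z)|\ge 1$ enters only through a local coercivity estimate in the very first step.

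For $\|\grad s_h\|_{L^2(\Omega)}^2$, the key observation is that on any $K\in\mesh$, $\grad s_h$ is constant, so
\begin{equation*}
\int_K |\nn_h\otimes\grad s_h|^2 = |\grad s_h|_K^2 \int_K |\nn_h|^2.
\end{equation*}
An explicit computation of the $P^1$ mass matrix on a simplex shows $\int_K|\nn_h|^2 \gtrsim |K|\sum_z|\nn_h(z)|^2$, where the sum runs over the vertices of $K$; combined with $|\nn_h(z)|\ge 1$, this yields $\int_K|\nn_h|^2 \gtrsim |K|$ and therefore $\|\grad s_h\|_{L^2(K)}^2 \lesssim \int_K|\nn_h\otimes\grad s_h|^2$. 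Summing over $K$ gives $\|\grad s_h\|_{L^2(\Omega)}^2 \lesssim \kappa^{-1}E_1^h[s_h,\nn_h]$.

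For $\|\Grad(s_h\nn_h)\|_{\LL^2(\Omega)}^2$, I apply the product rule $\Grad(s_h\nn_h) = \nn_h\otimes\grad s_h + s_h\Grad\nn_h$ and use the trivial bound $\|s_h\Grad\nn_h\|_{\LL^2(\Omega)}^2 \le 2E_1^h[s_h,\nn_h]$ to obtain
\begin{equation*}
\|\Grad(s_h\nn_h)\|_{\LL^2(\Omega)}^2 \le 2\|\nn_h\otimes\grad s_h\|_{\LL^2(\Omega)}^2 + 2\|s_h\Grad\nn_h\|_{\LL^2(\Omega)}^2 \lesssim E_1^h[s_h,\nn_h].
\end{equation*}
For $\|\Grad\uu_h\|_{\LL^2(\Omega)}^2$, I decompose $\Grad\uu_h = \Grad(s_h\nn_h) - \Grad(s_h\nn_h - \uu_h)$ and control the interpolation error elementwise. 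On each $K$ with vertices $\{z_i\}_{i=0}^d$ and barycentric coordinates $\{\phi_i\}$, symmetrisation of $s_h\nn_h - I_h[s_h\nn_h]$ gives
\begin{equation*}
(s_h\nn_h - \uu_h)\big|_K = -\sum_{i<j}\bigl(s_h(z_j)-s_h(z_i)\bigr)\bigl(\nn_h(z_j)-\nn_h(z_i)\bigr)\phi_i\phi_j.
\end{equation*}
Using $|s_h(z_j)-s_h(z_i)|\le h_K|\grad s_h|_K$, $|\nn_h(z_j)-\nn_h(z_i)|\le h_K|\Grad\nn_h|_K$, and $\int_K(\phi_i\phi_j)^2 \lesssim |K|$, one obtains $\|s_h\nn_h - \uu_h\|_{\LL^2(K)}^2 \lesssim h_K^4|K||\grad s_h|_K^2|\Grad\nn_h|_K^2$. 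An inverse estimate on polynomials of fixed degree then yields $\|\Grad(s_h\nn_h - \uu_h)\|_{\LL^2(K)}^2 \lesssim h_K^2|K||\grad s_h|_K^2|\Grad\nn_h|_K^2$, and invoking the $P^1$ inverse estimate $|K||\Grad\nn_h|_K^2 \lesssim h_K^{-2}\|\nn_h\|_{\LL^2(K)}^2$ absorbs the remaining $h_K^2$ factor to give $\|\Grad(s_h\nn_h - \uu_h)\|_{\LL^2(K)}^2 \lesssim \|\nn_h\otimes\grad s_h\|_{\LL^2(K)}^2$. Summing over $K$ and combining with the previous bound yields $\|\Grad\uu_h\|_{\LL^2(\Omega)}^2 \lesssim E_1^h[s_h,\nn_h]$.

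The main obstacle is this last step: since no $L^\infty$-bound on $\nn_h$ is available from the hypotheses, one cannot directly tame the factor $|\Grad\nn_h|_K$ appearing in the interpolation error. The resolution is the careful balancing of two inverse-type estimates—the interpolation error providing $h_K$ and the $P^1$ inverse estimate providing $h_K^{-1}$—which precisely trades $|\Grad\nn_h|_K$ for $\|\nn_h\|_{\LL^2(K)}$, matching the quantity already present in the energy.
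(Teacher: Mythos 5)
Your argument is correct, and the first two bounds match the paper's route: the $\|\grad s_h\|^2$ estimate by freezing $\grad s_h$ on each simplex and using the nodal $L^2$-equivalence together with $\abs{\nn_h(z)}\ge1$, and the $\|\Grad(s_h\nn_h)\|^2$ estimate by the product rule. Where you diverge is the transfer from $\Grad(s_h\nn_h)$ to $\Grad\uu_h = \Grad I_h[s_h\nn_h]$: the paper invokes its $W^{1,p}$-stability of the Lagrange interpolant (Lemma~\ref{lem:stab-Ih}, established earlier because it is also needed in the lim-sup proof) and combines it with H\"older and an inverse estimate to get $\|\Grad\uu_h\|_{\LL^2(K)}\lesssim\|\Grad(s_h\nn_h)\|_{\LL^2(K)}$ elementwise; you instead write out the interpolation error of the quadratic $s_h\nn_h$ explicitly via the symmetric $-\sum_{i<j}(s_j-s_i)(\nn_j-\nn_i)\phi_i\phi_j$ formula, bound it by $h_K^4\abs{K}\,\abs{\grad s_h}_K^2\abs{\Grad\nn_h}_K^2$, and then cancel the extra $h_K^2$ against a $P^1$ inverse estimate so that exactly $\|\nn_h\otimes\grad s_h\|_{\LL^2(K)}^2$ appears. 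Both are sound; your version is more elementary and self-contained (it never passes through $L^p$ with $p>d$), whereas the paper's version is shorter because it reuses machinery it needed anyway. The diagnosis you give at the end---that the absence of an $L^\infty$ bound on $\nn_h$ forces one to trade $\abs{\Grad\nn_h}_K$ for $\|\nn_h\|_{\LL^2(K)}$ via inverse estimates---is precisely the point, and the paper's $W^{1,p}$-stability lemma is simply a packaged way of doing the same trade.
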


\begin{proof}
Since $\norm[\LL^{\infty}(K)]{\nn_h} \geq 1$ for all $K \in \mesh$
and $\grad s_h$ is piecewise constant,
it holds that
\begin{equation*}
\begin{split}
\norm[\LL^2(\Omega)]{\grad s_h}^2
& \leq \sum_{K \in \mesh} \norm[\LL^{\infty}(K)]{\nn_h}^2 \norm[\LL^2(K)]{\grad s_h}^2
= \sum_{K \in \mesh} \abs{K} \, \norm[\LL^{\infty}(K)]{\nn_h}^2 \abs{\grad s_h\vert_K}^2 \\
& 
\lesssim \sum_{K \in \mesh} \abs{\grad s_h\vert_K}^2 \norm[\LL^2(K)]{\nn_h}^2
= \norm[\LL^2(\Omega)]{\nn_h \otimes \grad s_h}^2
\le \frac{2}{\kappa} E_1^h[s_h,\nn_h],
\end{split}
\end{equation*}
where the hidden multiplicative constant depends only
on the shape-regularity of $\{ \mesh \}$.
Let $\wt{\uu}_h = s_h \nn_h$ and use \eqref{eq:stab-Lagrange} for $p>d$
  in conjunction with an inverse estimate to obtain
\begin{align*}
  \norm[\LL^2(K)]{\grad I_h\wt{\uu}_h}\lesssim
  \abs{K}^{\frac{p-2}{2p}} \norm[\LL^p(K)]{\grad I_h\wt{\uu}_h}
  \lesssim \abs{K}^{\frac{p-2}{2p}} \norm[\LL^p(K)]{\grad \wt{\uu}_h}
  \lesssim \norm[\LL^2(K)]{\grad \wt{\uu}_h}
  \quad\text{for all } K\in\mesh.
\end{align*}
Consequently, for $\uu_h = I_h\wt{\uu}_h$ we deduce
\begin{equation*}
  \norm[\LL^2(\Omega)]{\Grad\uu_h}^2 \lesssim
  \norm[\LL^2(\Omega)]{\Grad \wt{\uu}_h}^2 \lesssim
  \norm[\LL^2(\Omega)]{\nn_h \otimes \grad s_h}^2
  + \norm[\LL^2(\Omega)]{s_h \Grad \nn_h}^2
  \lesssim E_1^h[s_h,\nn_h].
\end{equation*}
This completes the proof.
\end{proof}

We are now ready to extract convergent subsequences
and characterize their limits.

\begin{lemma}[characterization of limits] \label{lem:limits}
Let $\{(s_h,\nn_h,\uu_h)\} \subset \A_{h,\eps}(g_h,\rr_h)$
be a sequence such that $E_1^h[s_h,\nn_h] \le C$ and $\norm[\LL^{\infty}(\Omega)]{\nn_h} \le C$,
where $C>0$ is a constant independent of $h$.
Then, there exist a triple
$(s,\nn,\uu) \in \A(g,\rr)$
and a subsequence (not relabeled)
of $\{(s_h,\nn_h,\uu_h)\}$
satisfying the following properties:
\begin{enumerate}[$\bullet$]
\item As $h \to 0$, $(s_h,\uu_h,s_h \nn_h)$ converges towards $(s,\uu,\uu)$ weakly in
$H^1(\Omega) \times \HH^1(\Omega) \times \HH^1(\Omega)$,
strongly in $L^2(\Omega) \times \LL^2(\Omega) \times \LL^2(\Omega)$,
and pointwise a.e.\ in $\Omega$;
\item $\nn_h$ converges towards $\nn$ strongly in $\LL^2(\Omega\setminus\Sigma)$
and pointwise a.e.\ in $\Omega\setminus\Sigma$ as $h \to 0$ and $\eps\to0$;
\item $\nn$ is $L^2$-differentiable a.e.\ in $\Omega\setminus\Sigma$
and the orthogonal decomposition $\abs{\Grad \uu}^2 = \abs{\grad s}^2 + s^2 \abs{\Grad \nn}^2$ is valid
a.e.\ in $\Omega\setminus\Sigma$,
\end{enumerate}
where $\Sigma \subset \Omega$ is given by~\eqref{eq:singular}.
\end{lemma}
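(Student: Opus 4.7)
The approach is to exploit the coercivity of Lemma~\ref{lem:coercivity} to extract weakly convergent subsequences, identify the limits by passing to the limit in the discrete pointwise constraints, and appeal to Proposition~\ref{prop:L2diff} for the gradient structure.

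Lemma~\ref{lem:coercivity} combined with the energy bound $E_1^h[s_h,\nn_h]\le C$ yields a uniform $H^1$-bound for $\{s_h\}$ and uniform $\HH^1$-bounds for $\{\uu_h\}$ and $\{s_h\nn_h\}$. Extracting subsequences, we obtain weak $H^1$-limits $s$, $\uu$, and some $\wt\uu$, and by Rellich--Kondrachov together with a further extraction also strong $L^2$-convergence and pointwise a.e.\ convergence. The identity $\wt\uu=\uu$ will follow from the elementwise bound
\[
\|\uu_h-s_h\nn_h\|_{\LL^2(K)}\lesssim h_K^2\|\grad s_h\|_{\LL^2(K)}\|\Grad\nn_h\|_{\LL^\infty(K)},
\]
where one exploits that $s_h\nn_h$ is piecewise quadratic with Hessian reducing to the symmetrized cross term $\grad s_h\otimes\Grad\nn_h$; combining with the inverse estimate $\|\Grad\nn_h\|_{\LL^\infty(K)}\lesssim h_K^{-1}\|\nn_h\|_{\LL^\infty(K)}$ and the uniform $\LL^\infty$-bound on $\nn_h$ gives $\|\uu_h-s_h\nn_h\|_{\LL^2(\Omega)}\lesssim h\|\grad s_h\|_{\LL^2(\Omega)}\to 0$, and in particular $s_h\nn_h\to\uu$ pointwise a.e.

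Next I define $\nn:=\uu/s$ on $\Omega\setminus\Sigma$ and extend it arbitrarily (for example by a fixed unit vector) on $\Sigma$. For a.e.\ $x\in\Omega\setminus\Sigma$ we have $s_h(x)\to s(x)\ne 0$ and $s_h(x)\nn_h(x)\to\uu(x)$, so $\nn_h(x)\to\uu(x)/s(x)=\nn(x)$; the uniform $\LL^\infty$-bound on $\{\nn_h\}$ and dominated convergence then upgrade this to $\nn_h\to\nn$ strongly in $\LL^2(\Omega\setminus\Sigma)$. To verify $(s,\nn,\uu)\in\A(g,\rr)$ I check the ingredients separately: the relation $\uu=s\nn$ holds by construction on $\Omega\setminus\Sigma$ and on $\Sigma$ because $\uu=\lim s_h\nn_h=0$ a.e.\ in view of $s_h\to 0$ and $\|\nn_h\|_{\LL^\infty}\le C$; the nodal bounds $-1/(d-1)<s_h(z)<1$ propagate to $-1/(d-1)\le s\le 1$ a.e.\ since a piecewise linear function takes values in the convex hull of its nodal values, with strictness a.e.\ recovered from Fatou's lemma applied to $\int_\Omega\psi(s_h)\le C$ together with the blow-up of $\psi$ at the endpoints; Dirichlet conditions pass to the limit by continuity of the trace under weak $H^1$-convergence.

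The principal technical obstacle is the unit-length constraint $|\nn|=1$ a.e.\ on $\Omega\setminus\Sigma$, for which I plan to use the algebraic identity
\[
I_h\big[|\nn_h|^2\big]-|\nn_h|^2=\tfrac{1}{2}\sum_{j,k}|\nn_h(z_j)-\nn_h(z_k)|^2\lambda_j\lambda_k\ge 0
\]
on each simplex with barycentric coordinates $\lambda_j$, which yields both the pointwise bound $|\nn_h|^2\le I_h[|\nn_h|^2]$ and the integrated estimate $\|I_h[|\nn_h|^2]-|\nn_h|^2\|_{L^1(K)}\lesssim h_K^2\|\Grad\nn_h\|_{\LL^2(K)}^2$. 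Fixing $\eta>0$ and setting $\Omega_\eta:=\{x\in\Omega:|s(x)|>\eta\}$, Egorov's theorem provides a subset of arbitrarily close measure on which $|s_h|\ge\eta/2$ uniformly for $h$ small, hence there $\|\Grad\nn_h\|_{\LL^2}\lesssim\eta^{-1}\|s_h\Grad\nn_h\|_{\LL^2(\Omega)}$ is uniformly bounded by the energy. Combining with the admissibility constraint $\|I_h[|\nn_h|^2]-1\|_{L^1(\Omega)}\le\eps\to 0$ forces $|\nn_h|^2\to 1$ in $L^1(\Omega_\eta)$, while the pointwise convergence $|\nn_h|^2\to|\nn|^2$ a.e.\ and dominated convergence identify the limit, yielding $|\nn|=1$ a.e.\ on $\Omega_\eta$. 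Letting $\eta\to 0$ covers $\Omega\setminus\Sigma$. With $(s,\nn,\uu)\in\A(g,\rr)$ established, Proposition~\ref{prop:L2diff} immediately provides the $L^2$-differentiability of $\nn$ and the orthogonal decomposition on $\Omega\setminus\Sigma$.
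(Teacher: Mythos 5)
Your overall architecture—coercivity to extract weakly convergent subsequences, defining $\nn := \uu/s$ on $\Omega\setminus\Sigma$, upgrading to pointwise and strong $\LL^2$ convergence of $\nn_h$ via dominated convergence, and invoking Proposition~\ref{prop:L2diff} at the end—coincides with the paper's. Your variant of the $\|\uu_h - s_h\nn_h\|_{\LL^2}$ estimate via the symmetrized cross term in the Hessian and an inverse estimate on $\Grad\nn_h$ is fine, and your remark on recovering strictness of $-1/(d-1)<s<1$ from $\int_\Omega\psi(s_h)\le C$ and Fatou addresses a point the paper leaves implicit (although that bound requires $E_2^h$, not just $E_1^h$, which is what the lemma assumes).

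The gap is in how you establish the unit-length constraint. You want $\|I_h[|\nn_h|^2] - |\nn_h|^2\|_{L^1}$ to vanish on the good set, and your only tool is the elementwise estimate $\|I_h[|\nn_h|^2]-|\nn_h|^2\|_{L^1(K)}\lesssim h_K^2\|\Grad\nn_h\|_{\LL^2(K)}^2$. That estimate lives on whole elements $K$, while the Egorov set $A_\eta\subset\Omega_\eta$ where $|s_h|\ge\eta/2$ is merely measurable and does not respect the mesh. For an element $K$ that only partially meets $A_\eta$, you have no lower bound on $|s_h|$ on all of $K$, hence no control of $\|\Grad\nn_h\|_{\LL^2(K)}$ from $\|s_h\Grad\nn_h\|_{\LL^2(K)}$, and the interpolation bound on $K$ is useless—so the claim that the admissibility constraint $\eps\to 0$ ``forces $|\nn_h|^2\to1$ in $L^1(\Omega_\eta)$'' does not follow as written. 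The paper avoids this entirely by never touching $\Grad\nn_h$: it compares $|\uu_h|^2$ and $|s_h|^2$, using that $I_h[|\uu_h|^2-|s_h|^2] = I_h[|s_h|^2(|\nn_h|^2-1)]$ is bounded by $\|s_h\|_{L^\infty}^2\,\eps$ directly from the discrete admissible class, while the two remaining interpolation discrepancies involve only $\Grad\uu_h$ and $\grad s_h$, which are controlled by coercivity. In effect, multiplying by $s_h^2$ before interpolating cancels the very singularity your argument runs into. You could likely salvage your route with additional work (e.g., splitting elements by the fraction of their volume inside $A_\eta$ and using the uniform $\LL^\infty$ bound on the bad ones), but as written the step does not close, and the paper's substitution of the target quantity is the cleaner and more robust device.
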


\begin{proof}
For the sake of clarity, we divide the proof into 3 steps.

\medskip
\emph{Step~1: Convergence of $\{s_h\}$, $\{\uu_h\}$, and $\{ s_h \nn_h \}$}.
Since the energy $E_1^h[s_h,\nn_h]$ is uniformly bounded,
Lemma~\ref{lem:coercivity} (coercivity) gives uniform bounds in
$H^1(\Omega) \times \HH^1(\Omega) \times \HH^1(\Omega)$ for the 
the sequence $\{(s_h,\uu_h,s_h \nn_h)\}$.
With successive extractions of subsequences (not relabeled),
one can show that there exists a limit
$(s,\uu,\wt{\uu}) \in H^1(\Omega) \times \HH^1(\Omega) \times \HH^1(\Omega)$
such that $(s_h,\uu_h,s_h \nn_h)$ converges to $(s,\uu,\wt{\uu})$
weakly in $H^1(\Omega) \times \HH^1(\Omega)\times \HH^1(\Omega)$,
strongly in $L^2(\Omega) \times \LL^2(\Omega) \times \LL^2(\Omega)$,
and pointwise a.e.\ in $\Omega$.
Moreover, weak $H^1$-convergence guarantees attainment of traces, namely
$s=g$ and $\uu=\wt{\uu}=\rr$ on $\Gamma_D$.
To see this, note that $g_h=I_hg \to g$ in $W^{1,p}(\Omega)$ for $p>d$, according to \eqref{eq:conv-Lagrange}, and so in $H^1(\Omega)$. Therefore $s_h-g_h\in H^1_0(\Omega)$ satisfies
\[
s_h-g_h \weakto s-g \in H^1_0(\Omega),
\]
because $H^1_0(\Omega)$ is closed under weak convergence. Hence $s=g$ on $\Gamma_D$
in the sense of traces, as asserted. Dealing with $\uu_h$ and $\wt{\uu}_h$ is identical.
Since $\uu_h = I_h[s_h \nn_h]$, interpolation and inverse estimates, yield

\begin{equation*}
\begin{split}
\norm[\LL^2(\Omega)]{\uu_h - s_h \nn_h}^2
& \lesssim
\sum_{K \in \mesh} h_K^4 \norm[\LL^2(K)]{D^2(s_h\nn_h)}^2 \\
& \lesssim
\sum_{K \in \mesh} h_K^2 \norm[\LL^2(K)]{\Grad(s_h\nn_h)}^2\lesssim h^2 E_1^h[s_h,\nn_h] \leq C h^2.
\end{split}
\end{equation*}
This shows that $s_h \nn_h$ and $\uu_h$ converge strongly in $\LL^2(\Omega)$
towards the same limit i.e., $\wt \uu = \uu$.
Moreover, $s_h \nn_h$ converges to $\uu$
weakly in $\HH^1(\Omega)$ and pointwise a.e.\ in $\Omega$.

\medskip
\emph{Step~2: $\abs{s} = \abs{\uu}$ a.e.\ in $\Omega$}.
The triangle inequality yields
\begin{equation*}
\begin{split}
 \norm[L^1(\Omega)]{\abs{\uu_h}^2 - \abs{s_h}^2}  &\le
\norm[L^1(\Omega)]{\abs{\uu_h}^2 - I_h\big[\abs{\uu_h}^2\big]}
\\
& + \norm[L^1(\Omega)]{I_h\big[\abs{\uu_h}^2 - \abs{s_h}^2\big]}
+ \norm[L^1(\Omega)]{\abs{s_h}^2 - I_h\big[\abs{s_h}^2\big]}.
\end{split}
\end{equation*}
For the first and third terms on the right-hand side,
standard interpolation estimates yield
\begin{align*}
\norm[L^1(\Omega)]{\abs{s_h}^2 - I_h\big[\abs{s_h}^2\big]}
\lesssim h^2 \norm[\LL^2(\Omega)]{\grad s_h}^2,
\qquad
\norm[L^1(\Omega)]{\abs{\uu_h}^2 - I_h\big[\abs{\uu_h}^2\big]}
\lesssim h^2 \norm[\LL^2(\Omega)]{\Grad\uu_h}^2.
\end{align*}
On the other hand, since $\{s_h\}$ is uniformly bounded in $L^{\infty}(\Omega)$,
we infer that
\begin{equation*}
\begin{split}
\norm[L^1(\Omega)]{I_h\big[\abs{\uu_h}^2 - \abs{s_h}^2\big]}
& = \norm[L^1(\Omega)]{I_h\big[\abs{s_h}^2(\abs{\nn_h}^2 - 1)\big]} \\
& \le \norm[L^{\infty}(\Omega)]{s_h}^2 \norm[L^1(\Omega)]{I_h\big[\abs{\nn_h}^2 - 1\big]}
\le \eps\norm[L^{\infty}(\Omega)]{s_h}^2\to0,
\end{split}
\end{equation*}
as $\eps\to0$.
As $\abs{s_h} \to \abs{s}$ and $\abs{\uu_h} \to \abs{\uu}$ a.e.\ in $\Omega$,
we conclude that $\abs{s} = \abs{\uu}$ a.e.\ in $\Omega$.

\medskip
\emph{Step~3: Convergence of $\{\nn_h\}$}.
We now define $\nn : \Omega \to \R^3$
as $\nn := s^{-1}\uu$ in $\Omega\setminus\Sigma$
and as an arbitrary unit vector in $\Sigma$. Step 2 implies,
by construction, that $\abs{\nn} = 1$ a.e.\ in $\Omega$.
This shows that $(s,\nn,\uu)$ satisfies the structural condition~\eqref{eq:structural},
i.e., $(s,\nn,\uu) \in \A$.

We now observe that $s(x) \neq 0$ for a.e.\ $x \in \Omega\setminus\Sigma$
by definiton of $\Sigma$.
Since $s_h(x) \to s(x)$ as $h\to0$, if $h$ is sufficiently small
(depending on $x$), then $s_h(x) \neq 0$ is valid. Consequently,

\begin{equation*}
\nn_h(x)
= \frac{s_h(x) \nn_h(x)}{s_h(x)}
\to \frac{\uu(x)}{s(x)}
= \nn(x),
\end{equation*}
i.e., $\nn_h \to \nn$ pointwise a.e.\ in $\Omega\setminus\Sigma$.
Since $\{\nn_h\}$ is uniformly bounded in $\LL^{\infty}(\Omega)$,
the Lebesgue dominated convergence theorem
yields $\nn_h \to \nn$ strongly in $\LL^2(\Omega\setminus\Sigma)$.

Finally,
the $L^2$-differentiability
of $\nn$ 
and the orthogonal decomposition of $\Grad\uu$,
both valid a.e.\ in $\Omega\setminus\Sigma$,
follow from Proposition~\ref{prop:L2diff} (orthogonal decomposition).
This concludes the proof.
\end{proof}

We are now in the position to prove the lim-inf inequality.

\begin{proof}[\bf Proof of Theorem~\ref{thm:gamma_convergence}{\rm(ii)}]
The sequence $\{(s_h,\nn_h,\uu_h)\} \subset \A_{h,\eps}(g_h,\rr_h)$
satisfies the assumptions of Lemma~\ref{lem:limits} (characterization of limits).
Hence, we can apply it to obtain subsequences (not relabeled) converging
to the respective limits $(s,\nn,\uu) \in \A(g,\rr)$.
Moreover, since also
the sequences $\{\nn_h \otimes \grad s_h\}$ and $\{ s_h \Grad \nn_h \}$
are uniformly bounded in $\LL^2(\Omega)$,
there exist subsequences (not relabeled) and functions $\MM, \NN$ in $\LL^2(\Omega)$
such that
$\nn_h \otimes \grad s_h \weakto \MM$ and 
$s_h \Grad \nn_h \weakto \NN$
weakly in $\LL^2(\Omega)$.
Combining the equality $s_h \Grad \nn_h = \Grad (s_h \nn_h) - \nn_h \otimes \grad s_h$, which is valid in every element of $\mesh$, with $s_h \nn_h \weakto \uu$ weakly in $\HH^1(\Omega)$, helps identify the limits $\NN = \Grad\uu - \MM$.

Let $\PPhi \in \CC^{\infty}_c(\Omega\setminus\Sigma)$ be an arbitrary $d\times d$ tensor field. We can thus write
\begin{equation*}
\inner[\Omega\setminus\Sigma]{\nn_h \otimes \grad s_h - \nn \otimes \grad s}{\PPhi}
= \inner[\Omega\setminus\Sigma]{(\nn_h - \nn) \otimes \grad s_h}{\PPhi}
+ \inner[\Omega\setminus\Sigma]{\nn \otimes (\grad s_h - \grad s)}{\PPhi}.
\end{equation*}
We note that $\nn_h \to \nn$ strongly in $L^2(\Omega\setminus\Sigma)$ implies
\begin{equation*}
\inner[\Omega\setminus\Sigma]{(\nn_h - \nn) \otimes \grad s_h}{\PPhi}
\leq \norm[\LL^2(\Omega\setminus\Sigma)]{\nn_h - \nn}
\norm[\LL^2(\Omega)]{\grad s_h}
\norm[\LL^{\infty}(\Omega\setminus\Sigma)]{\PPhi}
\to 0,
\end{equation*}
whereas $s_h \weakto s$ weakly in $H^1(\Omega)$ yields
\begin{equation*}
\inner[\Omega\setminus\Sigma]{\nn \otimes (\grad s_h - \grad s)}{\PPhi}
\to 0.
\end{equation*}
Hence, we infer that
\begin{equation*}
\inner[\Omega\setminus\Sigma]{\nn_h \otimes \grad s_h - \nn \otimes \grad s}{\PPhi}
\to 0,
\end{equation*}
whence $\nn_h \otimes \grad s_h \weakto \nn \otimes \grad s$ weakly in $\LL^2(\Omega\setminus\Sigma)$.
This in turn identifies the limit
$\MM = \nn \otimes \grad s$, and gives thus the identity
$\NN = \Grad \uu - \nn \otimes \grad s$ a.e.\ in $\Omega\setminus\Sigma$.
We deduce that $\Grad \nn = \NN/s$,
where $\Grad\nn$ is understood in the $L^2$-sense according to Proposition
\ref{prop:L2diff}.
Exploiting the fact that norms are weakly lower semicontinuous, along with
$\abs{\nn \otimes \grad s}^2 = \abs{\grad s}^2$ a.e.\ in $\Omega \setminus \Sigma$,
and $\grad s = \0$ a.e.\ in $\Sigma$,
it holds that
\begin{equation*}
\begin{split}
\liminf_{h \to 0} E_1^h[s_h,\nn_h]
& =
\liminf_{h \to 0} \Big\{\frac{\kappa}{2} \norm[\LL^2(\Omega)]{\nn_h\otimes\grad s_h}^2
+ \frac{1}{2} \norm[\LL^2(\Omega)]{s_h\Grad\nn_h}^2 \Big\}\\
& \geq
\liminf_{h \to 0} \Big\{\frac{\kappa}{2} \norm[\LL^2(\Omega\setminus\Sigma)]{\nn_h\otimes\grad s_h}^2
+ \frac{1}{2} \norm[\LL^2(\Omega\setminus\Sigma)]{s_h\Grad\nn_h}^2 \Big\}\\
& \geq
\frac{\kappa}{2} \norm[\LL^2(\Omega\setminus\Sigma)]{\nn\otimes\grad s}^2
+ \frac{1}{2} \norm[\LL^2(\Omega\setminus\Sigma)]{s\Grad\nn}^2
= E_1[s,\nn].
\end{split}
\end{equation*}
Since $s_h\to s$ a.e.\ in $\Omega$ and $\psi$ is continuous,
$\psi(s_h)\to \psi(s)$ a.e.\ in $\Omega$.
The Fatou lemma yields
\begin{equation*}
E_2[s]
= \int_\Omega \psi(s)
= \int_\Omega \lim_{h \to 0}\psi(s_h)
\le \liminf_{h \to 0} \int_\Omega \psi(s_h)
= \liminf_{h \to 0} E_2^h[s].
\end{equation*}
Altogether, we thus obtain the lim-inf inequality~\eqref{eq:liminf_inequality},
namely $E[s,\nn]\le \liminf_{h \to 0} E[s_h,\nn_h]$.
This finishes the proof.
\end{proof}

\subsection{Properties of the numerical scheme}\label{sec:prop-scheme}

To start with,
we prove well-posedness and stability of Algorithm~\ref{alg:gradient_flow}.

\begin{proof}[\bf Proof of Proposition~\ref{prop:scheme}]
Let $i \in \N_0$ and $\ell \in \N_0$.
For fixed $s_h^i \in V_h$
(resp., $\nn_h^{i+1} \in \VVV_h$),
the left-hand side of~\eqref{eq:gradientflow1}
(resp., of~\eqref{eq:gradientflow3})
is a coercive and continuous bilinear form on $\VVV_{h,D}$
(resp., on $V_{h,D}$).
Therefore, the variational problem admits a unique solution
$\tt_h^{i,\ell} \in \dts[\nn_h^{i,\ell}]$
(resp., $s_h^{i+1} \in V_h$).
This shows part~(i) and~(iii) of Proposition~\ref{prop:scheme}.

Choosing the test function
$\pphi_h=\tau_{\nn} \, \tt_h^{i,\ell}
 = \nn_h^{i,\ell+1} - \nn_h^{i,\ell}\in \dts[\nn_h^{i,\ell}]$
in~\eqref{eq:gradientflow1}
yields
\begin{equation*}
\tau_{\nn} \norm[*]{\tt_h^{i,\ell}}^2
+ \kappa \inner[\Omega]{\nn_h^{i,\ell+1} \otimes \grad s_h^i}{(\nn_h^{i,\ell+1} - \nn_h^{i,\ell}) \otimes \grad s_h^i}
+ \inner[\Omega]{s_h^i \Grad \nn_h^{i,\ell+1}}{s_h^i \Grad (\nn_h^{i,\ell+1} - \nn_h^{i,\ell})}
= 0.
\end{equation*}
Using the identity $2a(a-b) = a^2 - b^2 + (a-b)^2$, valid for all $a,b\in \R$,
we obtain
\begin{equation*}
\begin{split}
\tau_{\nn} \norm[*]{\tt_h^{i,\ell}}^2 &+ \frac{\kappa}{2} \norm[\LL^2(\Omega)]{\nn_h^{i,\ell+1} \otimes \grad s_h^i}^2
- \frac{\kappa}{2} \norm[\LL^2(\Omega)]{\nn_h^{i,\ell} \otimes \grad s_h^i}^2
+ \frac{\kappa}{2} \norm[\LL^2(\Omega)]{(\nn_h^{i,\ell+1} - \nn_h^{i,\ell}) \otimes \grad s_h^i}^2 \\
& 
+ \frac{1}{2} \norm[\LL^2(\Omega)]{s_h^i \Grad \nn_h^{i,\ell+1}}^2
- \frac{1}{2} \norm[\LL^2(\Omega)]{s_h^i \Grad \nn_h^{i,\ell}}^2
+ \frac{1}{2} \norm[\LL^2(\Omega)]{s_h^i \Grad(\nn_h^{i,\ell+1} - \nn_h^{i,\ell})}^2
= 0,
\end{split}
\end{equation*}
which can be rewritten in more compact form as
\begin{equation} \label{eq:scheme_aux1}
E_1^h[s_h^i,\nn_h^{i,\ell+1}]
- E_1^h[s_h^i,\nn_h^{i,\ell}]
+ \tau_{\nn} \norm[*]{\tt_h^{i,\ell}}^2
+ \tau_{\nn}^2 \, E_1^h[s_h^i,\tt_h^{i,\ell}]
= 0.
\end{equation}
In particular, $E_1^h[s_h^i,\nn_h^{i,\ell+1}] \le E_1^h[s_h^i,\nn_h^{i,\ell}]$
is valid.
Since $E_1^h[s_h^i,\nn_h^{i,\ell}] \ge 0$ for all $i \in \N_0$,
the sequence $\{E_1^h[s_h^i,\nn_h^{i,\ell}]\}_{\ell\in\N_0}$ is convergent
(as it is monotonically decreasing and bounded from below).
In particular, it is a Cauchy sequence, which entails that
the stopping criterion~\eqref{eq:gradientflow2_stopping} is met in a finite number of iterations.
This shows part~(ii) of the proposition.

Let $\ell_i \in \N_0$ be the smallest integer for which the stopping criterion~\eqref{eq:gradientflow2_stopping} is satisfied.
Recall that $\nn_h^{i+1}=\nn_h^{i,\ell_i+1}$ and $\nn_h^i=\nn_h^{i,0}$.
Summation of~\eqref{eq:scheme_aux1} over $\ell=0,\dots,\ell_i$ yields
\begin{equation} \label{eq:scheme_aux2}
E_1^h[s_h^i,\nn_h^{i+1}]
- E_1^h[s_h^i,\nn_h^i]
+ \tau_{\nn} \sum_{\ell =0}^{\ell_i} \norm[*]{\tt_h^{i,\ell}}^2
+ \tau_{\nn}^2 \sum_{\ell =0}^{\ell_i} E_1^h[s_h^i,\tt_h^{i,\ell}]
= 0.
\end{equation}
Choosing the test function $w_h=\tau_s d_t s_h^{i+1} = s_h^{i+1} - s_h^i \in V_{{h,D}}$ in~\eqref{eq:gradientflow3}
and performing the same algebraic computation as above,
we arrive at
\begin{equation*}
\begin{split}
E_1^h[s_h^{i+1},\nn_h^{i+1}]
& - E_1^h[s_h^i,\nn_h^{i+1}] \\
& + \tau_s \norm[L^2(\Omega)]{d_t s_h^{i+1}}^2
+ \tau_s^2 \, E_1^h[d_t s_h^{i+1},\nn_h^{i+1}]
+ \inner[\Omega]{\psi_c'(s_h^{i+1}) - \psi_e'(s_h^i)}{s_h^{i+1}-s_h^i}
= 0.
\end{split}
\end{equation*}
Applying~\cite[Lemma~4.1]{nwz2017}, 
which yields the inequality
\begin{equation*}
E_2^h[s_h^{i+1}]
- E_2^h[s_h^i]
\leq
\inner[\Omega]{\psi_c'(s_h^{i+1}) - \psi_e'(s_h^i)}{s_h^{i+1}-s_h^i},
\end{equation*}
we obtain
\begin{equation*}
E_1^h[s_h^{i+1},\nn_h^{i+1}]
- E_1^h[s_h^i,\nn_h^{i+1}]
+ \tau_s \, \norm[L^2(\Omega)]{d_t s_h^{i+1}}^2
+ \tau_s^{2} \, E_1^h[d_t s_h^{i+1},\nn_h^{i+1}]
+ E_2^h[s_h^{i+1}]
- E_2^h[s_h^i]
\leq 0.
\end{equation*}
Adding the latter with~\eqref{eq:scheme_aux2}, and exploiting cancellation of
$E_1^h[s_h^i,\nn_h^{i+1}]$, we deduce
\begin{equation}\label{eq:energy_decrease_2}
\begin{split}
E^h[s_h^{i+1},\nn_h^{i+1}]
- E^h[s_h^i,\nn_h^i]
 \leq
& - \tau_{\nn} \sum_{\ell=0}^{\ell_i}\norm[*]{\tt_h^{i,\ell}}^2
- \tau_{\nn}^2 \sum_{\ell=0}^{\ell_i}E_1^h[s_h^i,\tt_h^{i,\ell}]
\\
& 
- \tau_s \norm[L^2(\Omega)]{d_t s_h^{i+1}}^2
- \tau_s^2 E_1^h[d_t s_h^{i+1},\nn_h^{i+1}] \le 0.
\end{split}
\end{equation}
This shows~\eqref{eq:energy_decrease} and concludes the proof.
\end{proof}

We recall that Algorithm \ref{alg:gradient_flow} does not enforce the unit-length constraint of the director field $\nn_h^j$. We finish this paper with a proof that violation of such constraint is controlled by $\tau_{\nn}$ and that $\|\nn_h^j\|_{\LL^\infty(\Omega)}$ is uniformly bounded provided the parameters $h$ and $\tau_{\nn}$ are suitably chosen.

\begin{proof}[\bf Proof of Proposition~\ref{prop:properties_n}]
Let $j \geq 1$.
Summation of~\eqref{eq:energy_decrease_2} over $i=0, \dots, j-1$ yields

\begin{equation}\label{eq:aux123}
E^h[s_h^j,\nn_h^j]
  + \tau_{\nn}  \sum_{i=0}^{j-1}\sum_{\ell=0}^{\ell_i} \norm[*]{\tt_h^{i,\ell}}^2
  \le  E^h[s_h^0,\nn_h^0].
\end{equation}
Moreover, the tangential update $\tt_h^{i,\ell}(z)$ is perpendicular to $\nn_h^{i,\ell}(z)$ for all $z \in \nodes$, whence $\nn_h^{i,\ell+1}(z) = \nn_h^{i,\ell}(z) + \tau_{\nn}\tt_h^{i,\ell}(z)$ satisfies
$
\abs{\nn_h^{i,\ell+1}(z)}^2 = \abs{\nn_h^{i,\ell}(z)}^2 + \tau_{\nn}^2\abs{\tt_h^{i,\ell}(z)}^2.
$
Iterating in $\ell$ and $i$ gives
\begin{equation*}
\abs{\nn_h^j(z)}^2
= \abs{\nn_h^0(z)}^2
+ \tau_{\nn}^2 \sum_{i=0}^{j-1} \sum_{\ell=0}^{\ell_i}\abs{\tt_h^{i,\ell}(z)}^2
= 1
+ \tau_{\nn}^2 \sum_{i=0}^{j-1} \sum_{\ell=0}^{\ell_i}\abs{\tt_h^{i,\ell}(z)}^2
\ge 1.
\end{equation*}
Then,
using the equivalence of the $L^p$-norm of a discrete function with
the weighted $\ell^p$-norm of the vector collecting its nodal values
(see, e.g., \cite[Lemma~3.4]{bartels2015}),
for $h_z$ being the diameter of the nodal patch associated with $z\in\nodes$,
we see that
\begin{equation*}
\norm[L^1(\Omega)]{I_h[\abs{\nn_h^j}^2]-1}
\lesssim
\sum_{z\in\nodes} h_z^d \big( \abs{\nn_h^j(z)}^2 -1 \big)
\leq
\tau_{\nn}^2 \sum_{z\in\nodes} h_z^d \sum_{i=0}^{j-1} \sum_{\ell=0}^{\ell_i}\abs{\tt_h^{i,\ell}(z)}^2
\lesssim \tau_{\nn}^2 \sum_{i=0}^{j-1} \sum_{\ell=0}^{\ell_i}\norm[\LL^{2}(\Omega)]{\tt_h^{i,\ell}}^2.
\end{equation*}
Combining \eqref{eq:metric_norm} with \eqref{eq:aux123} leads to
\begin{equation*}
\norm[L^1(\Omega)]{I_h[\abs{\nn_h^j}^2]-1}
\lesssim 
C_* \tau_{\nn}^2\sum_{i=0}^{j-1}\sum_{\ell=0}^{\ell_i}\norm[*]{\tt_h^{i,\ell}}^2
\le
C_* \tau_{\nn} \, E^h[s_h^0,\nn_h^0],
\end{equation*}
which turns out to be~\eqref{eq:L1error}.

It remains to estimate $\norm[\LL^{\infty}(\Omega)]{\nn_h^j}$.
Let us consider first the weighted $H^1$-metric~\eqref{eq:metricH1_weighted}.
Using a global inverse estimate (see, e.g., \cite[Remark~3.8]{bartels2015})
and the Poincar\'e inequality, we obtain
\begin{equation*}
\begin{split}
\norm[\LL^{\infty}(\Omega)]{\nn_h^j}^2 - 1
& = \max_{z \in \nodes} \abs{\nn_h^j(z)}^2 - 1
\leq
\tau_{\nn}^2 \sum_{i=0}^{j-1} \sum_{\ell=0}^{\ell_i}\max_{z \in \nodes} \abs{\tt_h^{i,\ell}(z)}^2 \\
& \lesssim
\tau_{\nn}^2 \sum_{i=0}^{j-1}\sum_{\ell=0}^{\ell_i}\norm[\LL^{\infty}(\Omega)]{\tt_h^{i,\ell}}^2
\lesssim
\tau_{\nn}^2 \, h_{\min}^{2-d} |\log h_{\min}|^2 \sum_{i=0}^{j-1}\sum_{\ell=0}^{\ell_i} \norm[\HH^1(\Omega)]{\tt_h^{i,\ell}}^2 \\
& \lesssim
\tau_{\nn}^2 \, h_{\min}^{2-d-\alpha} |\log h_{\min}|^2 \sum_{i=0}^{j-1}\sum_{\ell=0}^{\ell_i} \norm[\LL^2(\Omega)]{h^{\alpha/2} \Grad \tt_h^{i,\ell}}^2 \\
& \le
\tau_{\nn} \, h_{\min}^{2-d-\alpha} |\log h_{\min}|^2 \, E^h[s_h^0,\nn_h^0].
\end{split}
\end{equation*}
Therefore, \eqref{eq:Linfty_bound} is satisfied if $\tau_{\nn} \, h_{\min}^{2-d-\alpha} |\log h_{\min}|^2 \leq C^*$ with $C^*$ arbitrary.
For the $L^2$-metric~\eqref{eq:metricL2}, the result follows analogously,
provided that $\tau_{\nn} \, h_{\min}^{-d}\leq C^*$.
\end{proof}

\section*{Acknowledgments}

This project started while RHN and MR were in residence at the Institute for Computational and Experimental Research in Mathematics
(ICERM) during the workshop \emph{Numerical Methods and New Perspectives for Extended Liquid Crystalline Systems} in 2019 (grant DMS-1439786).
MR acknowledges partial support of the Austrian Science Fund (FWF) through the special research program
\emph{Taming complexity in partial differential systems} (grant F65)
and of the Erwin Schr\"odinger International Institute for Mathematics and Physics (ESI),
given during the workshop \emph{New Trends in the Variational Modeling and Simulation of Liquid Crystals}.
RHN and SY acknowledge partial support of the National Science Foundation
(grant DMS--1908267).

\bibliographystyle{acm}
\bibliography{ref}
\end{document}